\documentclass[fontsize=11pt,a4paper,DIV12]{scrartcl}

\usepackage[boldsans]{concmath} %
\usepackage{euler} %
\usepackage{euscript}

\usepackage[utf8]{inputenc}
\usepackage{amsthm,amsmath,amssymb}
\usepackage{enumerate,enumitem}
\usepackage{graphicx}
\graphicspath{{Figures/}}
\input{pdffig.sty}
\def\term#1{\emph{#1}\marginpar{\raggedright{\small\em #1}}}

\usepackage{color}

\newenvironment{claimproof}{%
 \noindent%
 \textit{Proof of Claim.}%
}
{
\hfill $\triangle$%
\medskip
}

\usepackage{thmtools}
\usepackage[hidelinks]{hyperref}
\usepackage[capitalize]{cleveref}
\usepackage{cite}

\theoremstyle{plain}
\newtheorem{thm}{Theorem}

\crefname{theorem}{Theorem}{Theorems}
\newtheorem{corollary}[thm]{Corollary}
\crefname{corollary}{Corollary}{Corollaries}
\newtheorem{proposition}[thm]{Proposition}
\crefname{proposition}{Proposition}{Propositions}
\newtheorem{lemma}[thm]{Lemma}
\crefname{lemma}{Lemma}{Lemmas}
\newtheorem{myobservation}{Observation}
\crefname{myobservation}{Observation}{Observations}

\newtheorem{remark}{Remark}
\crefname{remark}{Remark}{Remarks}

\crefname{conjecture}{Conjecture}{Conjectures}
\newtheorem*{quest*}{Question}
\newtheorem{myclaim}{Claim}
\crefname{myclaim}{Claim}{Claims}
\crefname{figure}{Figure}{Figures}

\usepackage{thm-restate} 


\usepackage{etoolbox}


\def\R{\mathcal{R}}
\def\RR{\mathbb{R}}
\def\VV{\mathcal{V}}
\def\VVup{\mathcal{V}^\uparrow}
\def\SS{\mathfrak{S}}
\def\CC{\mathcal{C}}
\def\ovl{\overline}
\def\BB{\mathcal{B}}

\def\partialW{\partial{}W}

\def\SC{\scshape}

\title{Plattenbauten: Touching Rectangles in Space
  \thanks{An extended abstract appears in the proceedings of the 46th International
    Workshop on Graph-Theoretic Concepts in Computer Science (WG 2020)~\cite{WG-Plattenbauten}.}}
\author{
\parbox{7.5cm}{\center
{\SC Stefan Felsner%
\thanks{Partially supported by DFG grant FE-340/11-1.}
}\\[3pt]
\normalsize
\small
  {Institut f\"ur Mathematik\\
   Technische Universit\"at Berlin}
}
\parbox{7.5cm}{\center
{\SC Kolja Knauer%
}\\[3pt]
\normalsize
\small
{Departament de Matem\`atiques i Inform\`atica,\\ Universitat de Barcelona\\
  and\\
   Aix-Marseille Univ, Universit\'e de Toulon,\\
   CNRS, LIS, Marseille}
}\\
\parbox{7.5cm}{\center
{\SC Torsten Ueckerdt%
}\\[3pt]
\normalsize
\small
  {Institute for Theoretical Informatics,\\
   Karlsruhe Institute of Technology (KIT)}
}
}

\date{}
\begin{document}

\maketitle

\begin{abstract}
  Planar bipartite graphs can be represented as touching graphs of horizontal
  and vertical segments in~$\RR^2$. We study a generalization in space:
  touching graphs of axis-aligned rectangles in~$\RR^3$, and prove that planar
  3-colorable graphs can be represented this way.  The result implies a
  characterization of corner polytopes previously obtained by Eppstein and
  Mumford. A by-product of our proof is a distributive lattice structure on
  the set of orthogonal surfaces with given skeleton.
    
  Further, we study representations by axis-aligned non-coplanar rectangles
  in~$\RR^3$ such that all regions are boxes. We show that the resulting
  graphs correspond to octahedrations of an octahedron. This generalizes the
  correspondence between planar quadrangulations and families of horizontal
  and vertical segments in~$\RR^2$ with the property that all regions are
  rectangles.
  \medskip

  \noindent{{\bfseries Keywords:} Touching graphs, contact graphs, boxicity, planar graphs}
\end{abstract}

\section{Introduction}

The importance of contact and intersection representations of graphs stems not only from their numerous applications including information visualization, chip design, bio informatics and robot motion planning (see for example the references in~\cite{buchsbaum2008rectangular,felsner2013rectangle}), but also from the structural and algorithmic insights accompanying the investigation of these intriguing geometric arrangements.
From a structural point of view, the certainly most fruitful contact representations (besides the ``Kissing Coins'' of Koebe, Andrew, and Thurston~\cite{koebe1936kontaktprobleme,bowers2010circle,stephenson2005introduction}) are axis-aligned segment contact representations:
families of interior-disjoint horizontal and vertical segments in~$\RR^2$ where the intersection of any two segments is either empty or an endpoint of at least one of the segments.
The corresponding touching graph\footnote{We use the term touching graphs rather than the more standard contact graph to underline the fact that segments with coinciding endpoints (e.g., two horizontal segments touching a vertical segment in the same point but from different sides, but also non-parallel segments with coinciding endpoint) do not form an edge.} has the segments as its vertices and the pairs of segments as its edges for which an endpoint of one segment is an interior point of the other segment, see the left of~\cref{fig:example}.
It has been discovered several times~\cite{hartman1991grid,pach1994representation} that any such touching graph is bipartite and planar, and that these two obviously necessary conditions are in fact already sufficient: Every planar bipartite graph is the touching graph of interior-disjoint axis-aligned segments in~$\RR^2$.
In fact, edge-maximal segment contact representations endow their associated plane graphs with many useful combinatorial structures such as 2-orientations~\cite{felsner2013rectangle}, separating decompositions~\cite{de2001topological}, bipolar orientations~\cite{rosenstiehl1986rectilinear,tamassia1986unified}, transversal structures~\cite{fusy2007combinatoire}, and Schnyder woods~\cite{ueckerdt-phd}.

\begin{figure}
 \centering
 \includegraphics[width=0.9\textwidth]{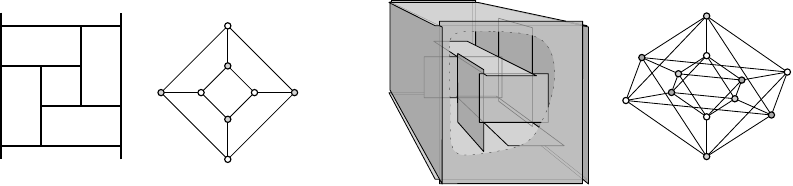}
 \caption{An axis-aligned segment contact representation (left) and a Plattenbau (right) together with the respective touching graphs.}
 \label{fig:example}
\end{figure}

In this paper we extend axis-aligned segment contact representations in~$\RR^2$ to axis-aligned rectangle contact representations in~$\RR^3$.
That is, we consider families~$\R$ of axis-aligned closed and bounded rectangles in~$\RR^3$ with the property that for all~$R,R'\in \R$ the intersection~$R\cap R'$ is a subset of the boundary of at least one of them, i.e., the rectangles are interiorly disjoint.
We call such a family a \term{Plattenbau}\footnote{Plattenbau (plural Plattenbauten) is a German word describing a building (\emph{Bau}) made of prefabricated concrete panels (\emph{Platte}).}. Given a Plattenbau~$\R$ one can consider its \emph{intersection graph}~$I_{\R}$, see Section~\ref{sec:Conclusions}. However, for us the more important concept is a certain subgraph of~$I_{\R}$, called the \term{touching graph}~$G_{\R}$ of~$\R$. There is one vertex in~$G_{\R}$ for each rectangle in~$\R$ and two vertices are adjacent if the corresponding rectangles \term{touch}, i.e., their intersection is non-empty and contains interior points of one and only one of the rectangles.
We say that~$G$ is a \term{Plattenbau graph} if there is a Plattenbau~$\R$ such that~$G\cong G_{\R}$. In this case we call~$\R$ a \term{Plattenbau representation} of~$G$.

Plattenbauten are a natural generalization of axis-aligned segment contact representations in~$\RR^2$ and thus Plattenbau graphs are a natural generalization of planar bipartite graphs.
While clearly all Plattenbau graphs are tripartite (properly vertex 3-colorable), it is an interesting challenge to determine the exact topological properties in~$\RR^3$ that hold for all Plattenbau graphs, thus generalizing the concept of planarity from $2$ to $3$ dimensions (for tripartite graphs).
We present results towards a characterization of Plattenbau graphs in three directions.

\paragraph*{Our Results and Organization of the Paper.}

In~\cref{sec:types-and-augmentation} we provide examples of Plattenbau graphs
and give some necessary conditions for all Plattenbau graphs.  We
observe that unlike touching graphs of segments, general Plattenbau graphs are
not closed under taking subgraphs.  We circumvent this issue by restricting
ourselves to {generic} Plattenbauten, i.e., $\R$ contains no coplanar
rectangles.
Moreover, we introduce boxed Plattenbauten where every bounded region of~$\RR^3$ is a box, and discuss questions of augmentability.

In~\cref{sec:planar} we show that within planar graphs the necessary condition of 3-colorability is also sufficient for Plattenbau graphs.
Thus, the topological characterization of Plattenbau graphs must fully contain planarity (which is not obvious as we consider 3-colorable graphs and not only bipartite graphs).
  
\begin{restatable}{theorem}{PLANAR}\label{thm:planar}
 Every 3-colorable planar graph is the touching graph of a generic Plattenbau.
\end{restatable}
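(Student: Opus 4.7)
The plan is to prove the theorem in three stages: reduce $G$ to a convenient super-structure, realize the super-structure via an orthogonal surface in $\RR^3$, and then restrict the representation back to $G$.

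First, given a $3$-colorable planar graph $G$ with coloring $c\colon V(G)\to\{1,2,3\}$, I would extend $G$ to a properly $3$-colored Eulerian planar triangulation $T$ on a superset of $V(G)$, in such a way that $G$ is an induced subgraph of $T$. The extension is carried out face by face: into each non-triangular face of $G$ one inserts a small number of auxiliary vertices of appropriate colors and triangulates locally, making sure that every resulting triangle uses all three colors and that no new edge joins two original vertices. Because deleting a rectangle from a proper Plattenbau leaves a proper Plattenbau on the remaining rectangles, a representation of $T$ will restrict to one of $G$ by removing the rectangles of the auxiliary vertices.

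Second, to realize $T$ I would use an orthogonal surface $S\subseteq\RR^3$, namely the boundary of the dominance up-set of a finite antichain $P\subseteq\RR^3$. Such a surface decomposes canonically into axis-aligned rectangular flats, and these flats form a Plattenbau. The goal is to assign one point $p_f\in\RR^3$ to each inner face $f$ of $T$ so that (i) the resulting orthogonal surface has one flat per vertex $v$ of $T$, orthogonal to the $c(v)$-axis, and (ii) two flats share a full boundary edge of one of them precisely when the corresponding vertices are adjacent in $T$. The coordinates of the face-points would come from a Schnyder-wood-like $3$-orientation on the Eulerian triangulation $T$ that distributes the face-incidences around each vertex among the three colors and yields integer coordinates from tree sizes. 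The distributive lattice on orthogonal surfaces with a prescribed skeleton, announced in the abstract, provides the natural framework for such a construction and for moving between different candidate coordinate assignments.

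The main obstacle is the second step: building the canonical $3$-orientation on $T$ and verifying that the resulting antichain $P$ produces an orthogonal surface whose rectangular flats realize $T$ combinatorially and meet only along edge-contacts, so that the Plattenbau is \emph{proper}. Once this correspondence is established, properness is automatic, and restricting to the rectangles indexed by $V(G)$ finishes the proof. I would also want a mild combinatorial check that the face-by-face extension in the first step can always be performed without creating edges among original vertices; this should follow by inserting enough auxiliary vertices per face and handling separately the few small color patterns for which a single central vertex does not suffice.
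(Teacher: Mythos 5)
Your high-level plan — triangulate $G$ to an Eulerian (equivalently, $3$-colorable) planar triangulation $T$, then realize $T$ via an orthogonal surface whose flats are the rectangles — is exactly the strategy of the paper. The first step is fine and is carried out essentially as you describe. The difficulty, which your proposal flags as ``the main obstacle'' but then leaves open, is precisely where the paper's proof is concentrated, and there is a real gap there.

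The gap is that it is \emph{not} true that every $3$-colorable triangulation $T$ arises as the dual of the skeleton of an orthogonal surface; one cannot, in general, ``assign one point per inner face of $T$'' and hope the resulting surface has the right combinatorics. Translating the requirement into an orientation problem on an auxiliary vertex--black-triangle incidence graph $H_T$ (each interior vertex should have out-degree $2$, each bounded black triangle in-degree $2$), the paper shows that the necessary counting inequality for such an $\alpha$-orientation can fail, and identifies the exact obstruction: separating triangles $\Delta$ of $T$ whose three outside faces are white (\emph{babets}). Only when $T$ is babet-free does the orientation exist, and even then one does not put a Schnyder-wood-like orientation on $T$ directly: one goes through a cycle cover on $T^{*}$, extracts a $3$-connected planar graph from the white minima, and equips \emph{that} graph with a Schnyder wood whose region vectors generate the surface. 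When babets do occur, a further idea is needed: cut $T$ along an innermost babet, build orthogonal surfaces for the inside and the cleaned outside separately, and patch a scaled copy of the inner surface into a concave corner at the saddle vertex of the outer one; this recursion is what ultimately yields a Plattenbau for all of $T$. Your appeal to ``the distributive lattice on orthogonal surfaces'' does not help here — it is a by-product of the existence result, not a device for producing one. Finally, a small point: properness is not quite ``automatic''; the surface construction first yields weak (edge-to-edge) contacts between non-adjacent flats, which must be turned into true interior contacts by slightly expanding one rectangle per weak contact.

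So the proposal identifies the right machinery but is missing the key combinatorial lemma (characterizing when the $\alpha$-orientation / cycle cover exists, i.e.\ the babet criterion) and the patching mechanism needed to handle the triangulations where the direct construction fails.
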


Along the proof of \cref{thm:planar}, we obtain a characterization of skeletons of
orthogonal surfaces which is implicit already in work of Eppstein and
Mumford~\cite{EppM-14}. Another proof of \cref{thm:planar} can be obtained from
Gon\c{c}alves' proof that~3-colorable planar graphs admit segment
intersection representations with segments of 3
slopes~\cite{g-3cpghaicru3s-19}. We further comment on these alternative
approaches in \cref{ssec:EM+G}.  A
consequence of our approach is a natural partial order - namely a distributive lattice - on the set of
orthogonal surfaces with a given skeleton.

In~\cref{sec:proper-boxed} we consider generic and boxed Plattenbau graphs as the 3-dimensional analogue of edge-maximal planar bipartite graphs, the quadrangulations.
We give a complete characterization:

\begin{restatable}{theorem}{OCTA}\label{thm:proper-boxed}
 A graph~$G$ is the touching graph of a generic boxed Plattenbau~$\R$ if and only if there are six outer vertices in~$G$ such that each of the following holds:
 \begin{enumerate}[leftmargin=2.3em,label = (P\arabic*)]
  \item $G$ is connected and the outer vertices of~$G$ induce an octahedron.\label{P1:outer-octahedron}
  \item The edges of~$G$ admit an orientation such that\label{P2:4-orientation}
   \begin{itemize}
    \item the bidirected edges are exactly the outer edges,
    \item each vertex has exactly 4 outgoing edges.
   \end{itemize}
  \item The neighborhood~$N(v)$ of each vertex~$v$ induces a spherical quadrangulation~$SQ(v)$ in which the out-neighbors of~$v$ induce a 4-cycle.\label{P3:quadrangulation}
   \begin{itemize}
    \item If~$v$ is an outer vertex, this 4-cycle bounds a face of~$SQ(v)$.
   \end{itemize}
  \item For every edge~$uv$ of~$G$ with common neighborhood~$C = N(u) \cap N(v)$, the cyclic ordering of~$C$ around~$u$ in~$SQ(v)$ is the reverse of the cyclic ordering of~$C$ around~$v$ in~$SQ(u)$.\label{P4:common-neighbors}
 \end{enumerate}
\end{restatable}

A \term{spherical quadrangulation} is a graph embedded on the 2-dimensional sphere without crossings such that each face is bounded by a 4-cycle.
Spherical quadrangulations are 2-connected, planar, and bipartite.
We remark that \cref{thm:proper-boxed} does not give a complete characterization of generic Plattenbau graphs since some generic Plattenbau graphs are not contained in any generic boxed Plattenbau graph as discussed in~\cref{sec:types-and-augmentation}.

Let us further remark that we show in Subsection~\ref{subsec:iterative} how every generic boxed Plattenbau can be constructed in a natural way from
trivial parts.

\section{Types of Plattenbauten and Questions of Augmentation}
\label{sec:types-and-augmentation}

Let us observe some properties of Plattenbau graphs.
Clearly, the class of all Plattenbau graphs is closed under taking induced subgraphs.
Examples of Plattenbau graphs are~$K_{2,2,n}$, see~\cref{fig:K225}, and the class of grid intersection graphs, i.e., bipartite intersection graphs of axis-aligned segments in the plane~\cite{hartman1991grid}.
For the latter take the segment intersection
representation of a graph, embed it into the $xy$-plane in~$\RR^3$ and thicken all horizontal segments a small amount into $y$-direction and all vertical segments a bit into $z$-direction outwards the $xy$-plane.
In particular,~$K_{m,n}$ is a Plattenbau graph, see~\cref{fig:K225}.
In order to exclude some graphs, we observe some necessary properties of all Plattenbau graphs.

 \begin{figure}
  \centering
  \hfill
  \includegraphics{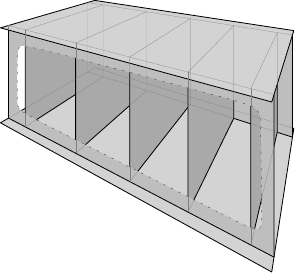}
  \hfill
  \includegraphics{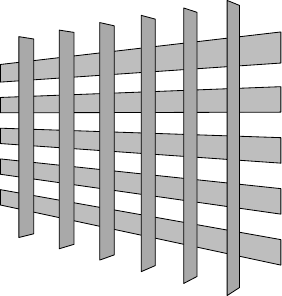}
  \hfill
  \ 
  \caption{Plattenbau representations of~$K_{2,2,5}$ (left) and~$K_{5,6}$ (right).}
  \label{fig:K225}
 \end{figure}

\begin{myobservation}\label{obs:easy}
 If~$G$ is a Plattenbau graph, then
 \begin{enumerate}
  \item the chromatic number of~$G$ is at most 3,\label{enum:3chromatic}
  \item the neighborhood of any vertex of~$G$ is planar,\label{enum:planar}
  \item the \emph{boxicity} of~$G$, i.e., the smallest dimension~$d$ such that~$G$ is the intersection graph of some boxes in~$\RR^d$, is at most 3.\label{enum:boxicity}
 \end{enumerate}
\end{myobservation}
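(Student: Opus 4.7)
For (1), we would assign each rectangle $R\in\R$ the color of the coordinate axis perpendicular to its supporting plane, giving a partition of $\R$ into three classes. The coloring is proper once we show that two rectangles in the same class cannot touch. They lie in parallel planes; if the planes are distinct then the rectangles are disjoint, and if the planes coincide then the Plattenbau condition forces the intersection to be contained in the boundary of at least one of them, hence into a $1$-dimensional subset of the common plane with no interior point of either rectangle. Either way the pair is non-touching.

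For (2), let $R$ be the rectangle for $v$, placed in the plane $z=0$. Every neighbor of $R$ sits in a plane perpendicular to $R$'s and meets $R$ along an axis-parallel segment. The argument of (1) already shows that two neighbors in the same axis class (both in planes of the form $y=c$, or both of the form $x=c$) lie in parallel planes and so cannot touch, so $G_\R[N(v)]$ is automatically bipartite between the two non-$z$ axis classes. We would produce a planar embedding of $G_\R[N(v)]$ by working on the boundary $\partial U\cong S^2$ of a small tubular neighborhood $U=R\times(-\epsilon,\epsilon)$ of $R$: for small enough $\epsilon$ each neighbor of $R$ meets $\partial U$ in a short arc, and every touching of two neighbors in $\RR^3$ is witnessed by the two corresponding arcs meeting at a point of $\partial U$, giving a planar drawing of $G_\R[N(v)]$ on the $2$-sphere.

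For (3), our plan is to realize $G_\R$ as the intersection graph of a family of axis-aligned boxes in $\RR^3$. The natural starting point is to use the rectangles of $\R$ themselves (viewed as degenerate boxes with one side of length zero), whose intersection graph in $\RR^3$ contains $G_\R$ as a spanning subgraph; the extra edges correspond exactly to pairs whose intersection lies on the boundary of both rectangles. To eliminate these spurious edges we would thicken each rectangle into a proper axis-aligned box along its perpendicular axis, with thicknesses (and where needed small shifts of the supporting planes) chosen so that every touching is preserved while every boundary-only intersection is broken, thus producing a box intersection graph in $\RR^3$ equal to $G_\R$ and witnessing boxicity at most~$3$.

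The main technical obstacle lies in (2): we must argue that the tubular neighborhood $U$ can be chosen small enough that all touchings among neighbors of $R$ are visible on $\partial U$, and that the resulting arcs realize $G_\R[N(v)]$ without spurious intersections. The bipartiteness of $G_\R[N(v)]$ established via (1) is essential here, since it eliminates touchings between neighbors of the same axis class and thereby keeps the local combinatorics manageable.
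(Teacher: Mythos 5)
Your proposal follows essentially the same route as the paper for all three items.

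Part~(1) is the same argument: color each rectangle by the axis perpendicular to it and check coplanar/parallel rectangles never touch.

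For Part~(2), the paper splits $N(v)$ into the rectangles meeting the closed halfspace above $R$ and those meeting the one below, notes that each half has a plane touching graph (namely the touching graph of the axis-parallel segments $R' \cap R$), and then glues the two plane graphs along the shared outer cycle, which appears in opposite cyclic orders. Your tubular-neighborhood picture on $\partial U \cong S^2$ is the same construction done in a single step, so the approach matches. You correctly identify the crux, namely that every touching between two neighbors must be witnessed on $\partial U$ (equivalently, that the trace segments on $R$ must touch), and you leave it open; the paper phrases this as the identification with the segment touching graph but is also brief. One remark is off, though: bipartiteness of $G_\R[N(v)]$ is not what makes the argument go through. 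The planarity comes from the interior-disjointness of the rectangles (and hence of their arcs on $\partial U$); bipartiteness is merely an incidental consequence of Part~(1) and plays no structural role in the embedding.

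For Part~(3), the paper's recipe has two steps: first \emph{shrink} each rectangle by $\varepsilon$ in the two directions parallel to it, which makes the whole family pairwise disjoint, and only then thicken by $\varepsilon$ in the perpendicular direction. Your plan is ``thicken, with small shifts of supporting planes where needed,'' which skips the shrink step. That step is exactly what separates boundary-only contacts (e.g.\ two perpendicular rectangles that share only a common edge or corner but have no interior points in the intersection): after thickening alone, such pairs become genuinely overlapping boxes, and shifting supporting planes does not clearly break all of them simultaneously without also destroying true touchings elsewhere. You should replace ``thicken plus shift'' by the shrink-then-thicken construction.
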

\begin{proof}  
 \cref{enum:3chromatic}:
 Each orientation class is an independent set.
   
 \cref{enum:planar}: 
 Let~$v$ be a vertex of~$G$ represented by~$R\in\R$.
 Let~$H$ be the supporting hyperplane of~$R$ and~$H^+, H^-$ the corresponding open halfspaces.  
 The neighborhood~$N(v)$ consists of rectangles~$R^+$ intersecting~$H^+$ and those~$R^-$ intersecting~$H^-$.
 The rectangles in each of these sets have a plane touching graph, since it corresponds to the touching graph of the axis-aligned segments given by their intersections with~$R$. 
 The neighboring rectangles in~$R^+\cap R^-$ are on the outer face in both graphs in opposite order, so identifying them gives a planar drawing of the graph induced by~$N(v)$.
 See \cref{fig:neighborhood} for an illustration.

 \begin{figure}
  \centering
  \includegraphics{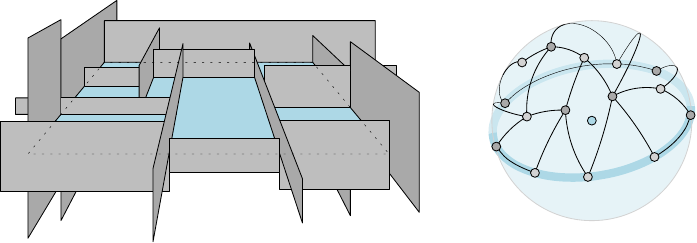}
   \caption{
    Left: A $z$-rectangle (depicted in blue) in a Plattenbau with its touching rectangles intersecting its upper halfspace~$H^+$.
    Right: The resulting crossing-free embedding on the upper hemisphere.}
  \label{fig:neighborhood}
 \end{figure}
 
 \cref{enum:boxicity}:
 A Plattenbau~$\R$ can be transformed into a set~$\mathcal{B}$ of boxes such that the touching graph of~$\R$ is the intersection graph of~$\mathcal{B}$ as follows:
 First, shrink each rectangle orthogonal to the $i$-axis by a small enough~$\varepsilon > 0$ in both dimensions different from~$i$.
 As a result, we obtain a set of pairwise disjoint rectangles.
 Then, expand each such rectangle by~$\varepsilon$ in dimension~$i$.
 The obtained set~$\mathcal{B}$ of boxes are again interiorly disjoint and all intersections are touchings.
\end{proof}

Note that for \cref{enum:3chromatic,enum:planar} of \cref{obs:easy} it is
crucial that~$G$ is the touching graph and not the intersection graph.
Moreover, \cref{obs:easy} allows to reject some graphs as Plattenbau graphs:
\begin{itemize}
 \item $K_4$ is not a Plattenbau graph (by \cref{enum:3chromatic} of \cref{obs:easy}), 
 \item $K_{1,3,3}$ is not a Plattenbau graph (by \cref{enum:planar} of \cref{obs:easy}).
 \item The full subdivision of~$K_{2^{2^5}+1}$ is not a Plattenbau graph (by \cref{enum:boxicity} of \cref{obs:easy} and~\cite{Cha-11}).
\end{itemize}

\begin{remark}
  The number of vertices of the last example is a 10 digits number. This can
  be reduced: A class $\cal S$ of geometric objects in $\RR^d$ is
  \textit{$t$-separable} if there exists a family
  ${\cal H} = \{H_1,\ldots, H_t\}$ of hyperplanes, such that any two disjoint
  elements of $\cal S$ can be separated by a translate of one of the
  hyperplanes from $\cal H$.  A family $\cal B$ of axis-aligned boxes in
  $\RR^3$ is clearly 3-separable.  In~\cite[Prop.~2.3]{f-odpmr-14} it has
  been shown that if $G$ is a bipartite graph admitting a $t$-separable
  intersection representation, then the bipartite poset corresponding to $G$
  has order dimension at most $2t$.  Since it is known that the order
  dimension of the full subdivision of $K_{2647}$ is 7 (Ho\c{s}ten and
  Morris~\cite{hm-odcg-99}) we conclude:

 \begin{itemize}
  \item the full subdivision of $K_{2647}$ is not a Plattenbau graph.
 \end{itemize}
\end{remark}

In particular, some bipartite graphs are not Plattenbau graphs.  Together
with~$K_{m,n}$ being a Plattenbau graph, this shows that the class of
Plattenbau graphs is not closed under taking subgraphs; an unusual situation
for touching graphs, which prevents us from solely focusing on edge-maximal
Plattenbau graphs.  To overcome this issue, we say that a Plattenbau~$\R$ is
\term{generic} if it contains no co-planar rectangles\footnote{In the
  conference version of this paper, we worked with a more restrictive version
  of ``proper'' Plattenbau, where for any two
touching rectangles $R$, $R'$ the intersection $R\cap R'$ must be a boundary edge of
one of $R$ and~$R'$. However, there was a mistake in one proof whence we
cannot ensure such a representations for planar 3-chromatic graphs.}.
In a generic Plattenbau each edge of each rectangle intersects the interior of at
most one other rectangle. Thus, if~$\R$ is generic, then each edge of the
touching graph~$G_\R$ can be removed by shortening one of the participating
rectangles slightly.  That is, the class of graphs with generic Plattenbau
representations is closed under subgraphs.

We furthermore say that a Plattenbau~$\R$ is \term{boxed} if six \emph{outer rectangles} constitute the sides of a box that contains all other rectangles and all regions inside this box are also boxes.
(A box is an axis-aligned full-dimensional cuboid, i.e., the Cartesian product of three bounded intervals of non-zero length.
And a region is a connected component of $\mathbb{R}^3$ after the removal of all rectangles in $\R$.)
For boxed Plattenbauten we use the additional convention that the edge-to-edge intersections of outer rectangles yield edges in the touching graph, even though these intersections contain no interior points.
In particular, the outer rectangles of a generic boxed Plattenbau induce an octahedron in the touching graph.

\begin{myobservation}\label{obs:proper-gives-sparse}
 The touching graph~$G_\R$ of a generic Plattenbau~$\R$ with~$n \geq 6$ vertices has at most~$4n-12$ edges.
 Equality holds if 
 $\R$ is boxed.
\end{myobservation}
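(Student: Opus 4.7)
The plan is to prove the edge bound by an edge-ownership double count that gives $m_{\mathrm{real}} \le 4n$, and then to sharpen it by twelve by treating the boxed and non-boxed cases separately.

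First I would establish the local lemma that for each rectangle $R$ and each of its four boundary edges $e$, at most one other rectangle $R'$ contains $e$ in its open two-dimensional interior. Such an $R'$ has a supporting plane containing the segment $e$ which cannot be $R$'s own plane (otherwise $e$ would lie on $R'$'s boundary, not its interior), so $R'$ must lie in the unique axis-aligned plane perpendicular to $R$ through $e$; two such rectangles would then share a two-dimensional neighborhood of $e$ in their interiors, contradicting interior-disjointness. In any proper touching the two rectangles are perpendicular (coplanar axis-aligned interior-disjoint rectangles meet only on boundaries, producing no interior point in the intersection), and by properness the intersection is an edge of exactly one of them lying in the interior of the other. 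Assigning each proper touching to its unique edge-owner gives $m_{\mathrm{real}} = \sum_R d_{\mathrm{out}}(R) \le 4n$, where $d_{\mathrm{out}}(R)$ counts the edges of $R$ contained in interiors of other rectangles.

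For the boxed case the bound is tight by direct calculation. The six outer rectangles satisfy $d_{\mathrm{out}}(R) = 0$, since each of their edges only meets an adjacent outer rectangle as a shared boundary edge and never lies in another rectangle's interior, and every inner rectangle is forced by the boxed property to satisfy $d_{\mathrm{out}}(R) = 4$, because an unused edge would let the two bounded regions on either side of it merge into a single non-box region. Hence $m_{\mathrm{real}} = 4(n-6) = 4n - 24$, and the twelve octahedron convention edges among the outer rectangles give $m = 4n - 12$, establishing the equality case.

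For a non-boxed $\R$ the plan is an augmentation: extend $\R$ to a proper boxed Plattenbau $\R^*$ by first enclosing $\R$ in six outer rectangles and then iteratively inserting interior rectangles to subdivide any non-box region. Writing $\Delta n, \Delta m$ for the added rectangle and touching counts, the boxed identity $m^* = 4n^* - 12$ transforms the target $m \le 4n - 12$ into $\Delta m \ge 4\Delta n$, with strict inequality precisely when $\Delta n > 0$. I would derive this inequality via a charging scheme that credits each added rectangle with at least four of its $\R^*$-incident touchings (added inner rectangles by the boxed-induced $d_{\mathrm{out}} = 4$, added outer rectangles by their four convention-counted shared edges with adjacent outer rectangles) and assigns each touching to the later-added of its two endpoints to avoid double counting. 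The hard part will be turning this local observation into the global inequality $\Delta m \ge 4\Delta n$: a careful ordering of the augmentation is needed so that, at the moment of adding a rectangle, at least four of its $\R^*$-touchings are in fact first introduced by that addition, and the interplay between outer additions (which bring convention edges but also force touchings with inner rectangles whose edges lie on the bounding-box surface) and inner additions (which can raise the $d_{\mathrm{out}}$-count of rectangles already present) has to be tracked. The strict inequality in the non-boxed case should then follow from the observation that any proper Plattenbau with $n \ge 6$ attaining $m = 4n - 12$ must force the boxed local structure on each rectangle, ruling out a strictly non-boxed extremal example.
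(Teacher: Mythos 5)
Your edge-ownership count in the first paragraph is essentially the paper's argument: the paper also observes that each edge of $G_\R$ can be charged injectively to a boundary edge of one of the two rectangles (the one whose edge lies inside the other), giving the bound $4n$, and your ``at most one $R'$ contains $e$ in its open interior'' is a correct justification of the injectivity. Your calculation for the boxed equality case is also sound.

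The third paragraph, however, has a fatal flaw. Your plan to handle a non-boxed $\R$ by augmenting it to a proper boxed Plattenbau $\R^*$ and then comparing $m$ with $m^*=4n^*-12$ cannot be carried out: the paper explicitly states, immediately after this very observation and with \cref{fig:incompletable} as a witness, that not every proper Plattenbau in $\RR^3$ can be completed to a boxed Plattenbau (in marked contrast with the situation for segments in $\RR^2$). So the object $\R^*$ your argument is anchored to need not exist, and the entire charging scheme for $\Delta m\ge 4\Delta n$ has nothing to charge against. The paper instead sharpens the $4n$ bound \emph{directly}: assuming at least two rectangles of each of the three orientations, it exhibits at least twelve rectangle edges lying on the boundary of the bounding box of $\R$, and observes that none of these can sit in the interior of another rectangle, so none is the image of the injection; this yields $m\le 4n-12$ without any completion. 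Your proposal also silently assumes this non-degeneracy: if some orientation is used by at most one rectangle, the ``enclose in six outer rectangles'' step does not behave as you describe. The paper treats that degenerate case separately, bounding $G_\R$ as a planar bipartite graph plus one extra vertex, which gives strictly fewer than $4n-12$ edges once $n\ge 6$; you would need an analogous separate argument. Finally, the equality characterization in the paper falls out for free from tightness of the twelve-boundary-edges count, whereas in your scheme it would require yet another round of argument on top of an augmentation that may not exist.
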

\begin{proof}
 As noted above, for a generic Plattenbau~$\R$ with touching graph~$G_\R$ there is an injection from the edges of~$G_\R$ to the edges of rectangles in~$\R$:
 For each edge~$uv$ in~$G_\R$ with corresponding rectangles~$R_u,R_v \in \R$, take the edge of~$R_u$ or~$R_v$ that contributes to their intersection~$R_u \cap R_v$.
 This way, each of the four edges of each of the~$n$ rectangles in~$\R$ corresponds to at most one edge in~$G_\R$.
 
 Moreover, if~$\R$ contains at least two rectangles of each orientation, the bounding box of~$\R$ contains at least 12 edges of rectangles in its boundary, none of which corresponds to an edge in~$G_\R$.
 Thus, in this case~$G_\R$ has at most~$4n-12$ edges.
 Otherwise, for one of the three orientations,~$\R$ contains at most one rectangle in that orientation.
 In this case,~$G_\R$ is a planar bipartite graph plus possibly one additional vertex.
 In particular,~$G_\R$ has at most~$2(n-1) - 4 + (n-1) < 4n-12$ edges, as long as~$n \geq 6$.
 
 Finally, if $\R$ is boxed, then the above analysis is tight, i.e., $G_\R$ has exactly $4n-12$ edges in this case.
%
\end{proof}

An immediate consequence of \cref{obs:proper-gives-sparse} is that~$K_{5,6}$
is a Plattenbau graph which has no generic Plattenbau representation.
Contrary to the case of axis-aligned segments in~$\RR^2$, neither can every
generic Plattenbau in~$\RR^3$ be completed to a boxed Plattenbau, nor is every
boxed Plattenbau equivalent to a generic one. See \cref{fig:incompletable} for
problematic examples. The example on the left is generic, but it is not a
subgraph of a Plattenbau graph with a generic and boxed Plattenbau
representation. The touching graph of the example on the right is 7-regular
and has 12 vertices, i.e., 42 edges. Hence, by \cref{obs:proper-gives-sparse}
it has too many edges to be the touching graph of a generic Plattenbau.

 \begin{figure}
  \centering
  \includegraphics{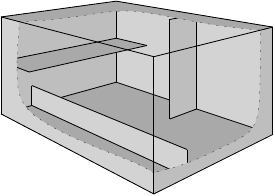}
  \caption{
   A generic Plattenbau that cannot be augmented to a boxed Plattenbau and a boxed Plattenbau that is not generic.
  }
  \label{fig:incompletable}
 \end{figure}

\section{Planar 3-Colorable Graphs}\label{sec:planar}

Let us recall the main result of this section:

\PLANAR*

The proof of this theorem is in several steps. First we introduce
orthogonal surfaces and show that the dual graph of the skeleton
of an orthogonal surface is a Plattenbau graph (\cref{prop:OS-RC}).  In
the second step we characterize triangulations whose dual is the
skeleton of an orthogonal surface (\cref{prop:babet}). One consequence
of this is a natural very well-behaved partial order, namely a distributive lattice, on the set
of orthogonal surfaces with given skeleton (\cref{cor:dist}).
We then show that a Plattenbau representation of a 3-colorable
triangulation can be obtained by patching orthogonal surfaces
in corners of orthogonal surfaces (\cref{sec:patching}). 

We begin with an easy observation.
\begin{myobservation} 
  Every 3-colorable planar graph~$G$ is an induced subgraph of a
  3-colorable planar triangulation.
\end{myobservation}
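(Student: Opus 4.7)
Fix a planar embedding of $G$ and a proper $3$-coloring $c\colon V(G)\to\{1,2,3\}$. I will fill each non-triangular face of $G$ with a properly $3$-coloured triangulated disk having the matching boundary coloring and only new interior vertices; since no edge is ever added between two existing vertices of $G$, the resulting graph is a properly $3$-coloured planar triangulation containing $G$ as an induced subgraph.

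Thus everything reduces to the following disk-filling lemma: \emph{every properly $3$-coloured cycle $C$ bounds a properly $3$-coloured triangulated disk with only new interior vertices}. I would prove it by induction on $|C|$. The base case $|C|=3$ is immediate. If $|C|\ge 4$ uses only two of the three colours, then $|C|$ is even and a single new interior vertex of the third colour, joined to every boundary vertex, triangulates the disk. If $|C|\ge 4$ and all three colours appear, I either invoke an explicit small-case gadget for $|C|\in\{4,5\}$, or for $|C|\ge 6$ split the disk through a short path of one or two new interior vertices between two carefully chosen boundary vertices, producing two strictly smaller sub-disks on which induction applies.

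\paragraph*{Main obstacle.}
The three-colour case is the delicate one. A naive ``ear'' insertion (one new vertex adjacent to three consecutive boundary vertices whose outer pair is monochromatic) merely swaps out a boundary vertex and leaves the residual cycle of the same length, so length alone is not an adequate inductive measure. Moreover, the pentagon with all three colours on its boundary admits no $3$-coloured triangulation with only one or two new interior vertices (the would-be wheel centre needs a fourth colour), so it must be resolved by an explicit three-vertex gadget. Once these small cases are in hand, longer three-coloured cycles can always be split by a short chord into a short sub-disk (of length at most $5$) and a strictly shorter residual, closing the induction.
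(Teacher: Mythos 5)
Your overall plan --- fill each face of a plane embedding of $G$ with a properly $3$-coloured triangulated disk whose interior consists only of new vertices --- is the same high-level idea the paper uses, but your mechanism for filling a face differs. Where you induct on the boundary length with explicit pentagon gadgets and chord-splits, the paper stacks a single vertex coloured $c$, where $c$ is a colour that at least three boundary vertices of the face avoid, joining it to every non-$c$ boundary vertex. Since no two consecutive boundary vertices share a colour, between two consecutive non-$c$ vertices there is at most one vertex of colour $c$, so this first stack already leaves only $3$- and $4$-faces; a remaining $4$-face is finished by at most two further stacks (if it is $3$-coloured, one stack turns it into a triangle plus a $2$-coloured $4$-face, which a final stack triangulates). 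That one-shot stack sidesteps both the pentagon gadget and the bookkeeping needed to show your chord-splits terminate, which is why I would call the paper's route the cleaner of the two.

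There is, however, a genuine gap in your proposal: your reduction to the disk-filling lemma requires every face boundary of $G$ to be a simple cycle, i.e., it requires $G$ to be $2$-connected (in particular connected). If $G$ has a cut vertex or a bridge, or is disconnected, some face boundary walk repeats a vertex or is not even connected, and the lemma you state --- which is about properly $3$-coloured \emph{cycles} --- simply does not apply; the argument already fails for a tree. The paper's very first step is to embed $G$ as an induced subgraph of a $2$-connected $3$-colourable plane graph $G'$, and only then to fill faces. You need the same preliminary augmentation. Once it is in place, the rest of your sketch is salvageable: the all-three-colours pentagon does indeed require three new interior vertices, and for $|C|\ge 6$ a single new vertex joined to two boundary vertices at cyclic distance $3$ always yields a valid split into a $5$-cycle and an $(|C|-1)$-cycle, so no two-vertex chord is ever needed.
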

\begin{proof}[Sketch]
  Consider~$G$ with a plane embedding.  By adding just subdivided
  edges we find a 2-connected 3-colorable~$G'$ which has~$G$ as an induced
  subgraph.
 
  Fix a 3-coloring of~$G'$.  Let~$f$ be a face of~$G'$ of size at least four
  and~$c$ be a color such that at least three vertices of~$f$ are not
  colored~$c$. Stack a vertex~$v$ inside~$f$ and connect it to the vertices
  on~$f$ that are not colored~$c$.  The new vertex~$v$ is colored~$c$ and the
  sizes of the new faces within~$f$ are 3 or~4.  After stacking in a
  4-face, the face is either triangulated or there is a color which is not
  used on any newly created 4-face. A second stack triangulates it.
\end{proof}

A plane triangulation~$T$ is 3-colorable if and only if it is Eulerian.
Hence, the dual graph~$T^*$ of~$T$ apart from being 3-connected, cubic, and planar is also bipartite.
The idea of the proof is to find an orthogonal surface~$\SS$ such that~$T^*$ is the skeleton of~$\SS$. 
This is not always possible but with a technique of patching one orthogonal surface in an appropriate corner of a Plattenbau representation obtained from another orthogonal surface, we shall get to a proof of the theorem.

Consider~$\RR^3$ with the \emph{dominance order}, i.e.,~$x\leq y$ if and only
if $x_i \leq y_i$ for~$i=1,2,3$. The join and meet of this distributive
lattice are the componentwise $\max$ and $\min$. Let $\VV\subseteq\RR^3$ be a
finite \emph{antichain}, i.e., a set of mutually incomparable points. The
\emph{filter} of~$\VV$ is the
set~$\VVup := \{x\in\RR^3\mid \exists{v\in\VV}:v \leq x\}$ and the
boundary~$\SS_{\VV}$ of~$\VVup$ is the \term{orthogonal surface} generated
by~$\VV$. The left part of \cref{fig:OS-example} shows an example
in~$\RR^3$. The nine vertices of the \emph{generating set} $\VV$ are
emphasized.

 \begin{figure}
  \centering
  \includegraphics[width =.9\textwidth]{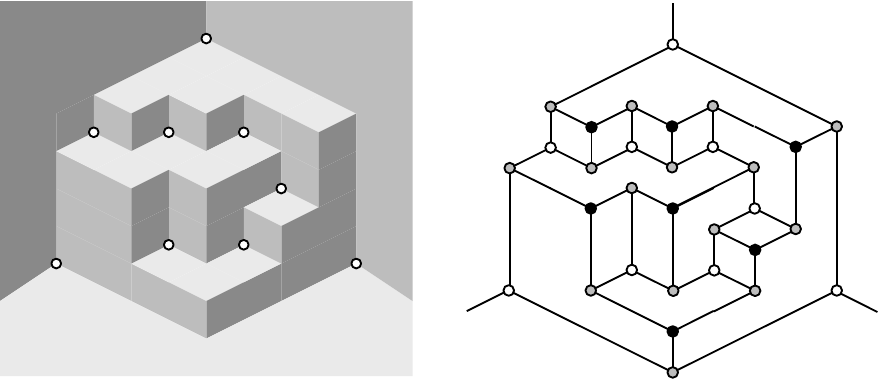}
  \caption{An orthogonal surface and its skeleton (vertex~$v_\infty$ omitted).}
  \label{fig:OS-example}
 \end{figure}

Orthogonal surfaces have been studied by Scarf~\cite{s-cee-73} in the context of test sets for integer programs.
They later became of interest in commutative algebra, cf.\ the monograph of Miller and Sturmfels~\cite{ms-cca-04}.
Miller~\cite{m-pgmrtmi-02} observed the connections between orthogonal surfaces, Schnyder woods and the Brightwell-Trotter Theorem about the order dimension of polytopes, see also~\cite{f-gga-04}.

A maximal connected set of points of an orthogonal surface which is constant in
one of the coordinates is called a \emph{flat}. A non-empty
intersection of two flats is an \emph{edge}.
A point contained in three flats is called a \emph{vertex}. An edge incident to only one
vertex is a \emph{ray}.  We will only consider orthogonal surfaces
obeying the following non-degeneracy conditions:
(1)~Every vertex has degree exactly three.
(2)~There are exactly three rays.

The \term{skeleton}~$G_\SS$ of an orthogonal surface is the graph consisting of the
vertices and edges of the surface, in addition there is a 
vertex~$v_\infty$ which serves as second vertex of each ray.
The skeleton graph is planar, cubic, and bipartite.
The bipartition consists of the maxima and minima of the surface in
one class and of the saddle vertices in the other class.
The vertex~$v_\infty$ is a saddle vertex.
The dual of~$G_\SS$ is a triangulation with a designated outer face,
the dual of~$v_\infty$.

The generic structure of a bounded flat is as shown in
\cref{fig:gen-flat}; the boundary consists of two zig-zag paths
sharing the two \textit{extreme points} of the flat. The minima of the \emph{lower}
zig-zag are elements of the generating set~$\VV$, they are minimal
elements of the orthogonal surface~$\SS$. The maxima of the \emph{upper} zig-zag
are maximal elements of~$\SS$.  The maxima  can be considered
to be \textit{dual generators} of~$\SS$. 

 \begin{figure}
  \centering
  \includegraphics[width =.65\textwidth]{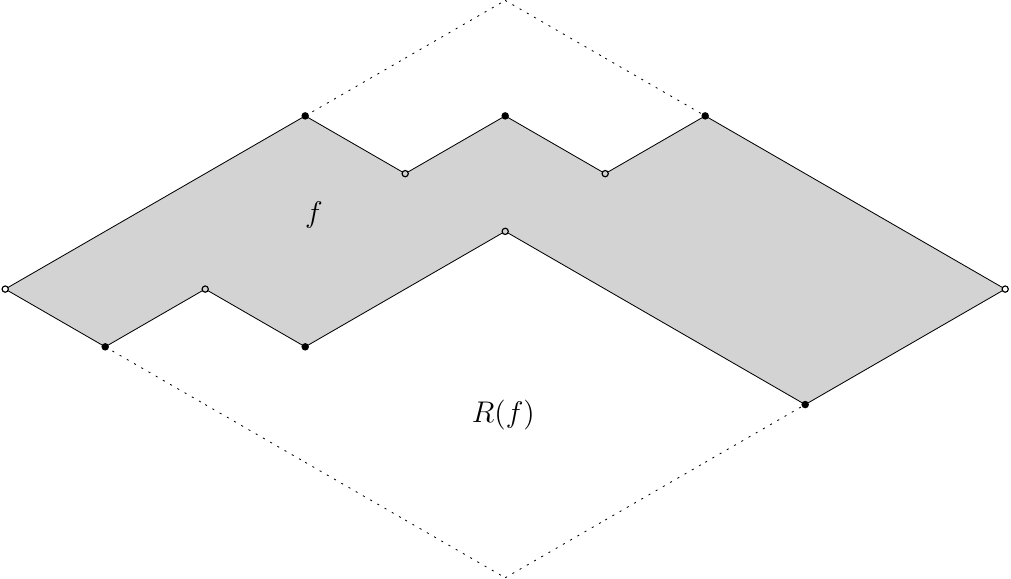}
  \caption{Generic flat~$f$ and the spanned rectangle~$R(f)$.}
  \label{fig:gen-flat}
 \end{figure}

With the following proposition we establish a first connection between
orthogonal surfaces and Plattenbau graphs.

\begin{proposition}\label{prop:OS-RC}
  The dual triangulation of the skeleton of an orthogonal surface~$\SS$
  obeying the two non-degeneracy conditions is a Plattenbau graph and admits a
  generic Plattenbau representation.
\end{proposition}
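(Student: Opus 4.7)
The plan is to construct $\R$ directly by assigning to each flat $f$ of $\SS$ the spanned rectangle $R(f)$ of \cref{fig:gen-flat}: if $f$ lies in the supporting plane $x_i = c$ and has extreme points $p_-$ (dominance-minimum) and $p_+$ (dominance-maximum), then $R(f)$ is the axis-parallel rectangle in $x_i = c$ with opposite corners $p_-$ and $p_+$. Setting $\R = \{R(f) : f \text{ a flat of } \SS\}$, I would argue that $\R$ is a proper Plattenbau whose touching graph equals the dual triangulation of $G_\SS$.

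The first step is to check that $\R$ is interior-disjoint. Two rectangles $R(f), R(f')$ in parallel supporting planes are disjoint because distinct flats constant in the same coordinate occupy disjoint regions of the surface, and the bounding boxes inherit this. For rectangles in perpendicular supporting planes, their intersection is empty or an axis-parallel segment $L$ parallel to the third axis; using that $\SS$ is the boundary of the filter $\VVup$, I would argue that $L$ lies on $\SS$ and in fact inside $f \cap f'$, so in particular $L$ cannot meet the interior of either rectangle through a $2$-dimensional patch.

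The second step is to match touchings of rectangles with shared edges of the surface. For each edge $e$ of $\SS$, shared by exactly two flats $f_1, f_2$, the segment $e$ appears as a zigzag segment of both flats' boundaries. The key claim is that $e$ lies in the interior of exactly one of $R(f_1), R(f_2)$ with respect to the perpendicular in-plane coordinate, while in the other it abuts the bounding box's boundary. This asymmetry is determined locally by the turning direction of the zigzags at the endpoints of $e$. As a consequence, $R(f_1) \cap R(f_2) = e$ realizes a proper touching along an edge of one rectangle, and the touching graph has exactly one edge per surface edge.

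The main obstacle, I expect, is the analysis at vertices of $\SS$ where three flats meet. The direct bounding-box construction seemingly forces pairwise contacts along shared corners of the three rectangles, which would not look like proper touchings. The resolution uses the non-degeneracy conditions on $\SS$ (every vertex is shared by exactly three flats and the local configuration is a cube-corner) together with the orientation carried by the zigzag turns at the vertex: these orient the three incident surface edges in a consistent way, so that each of the three pairwise contacts at the vertex is in fact interior to exactly one of the two participating rectangles. With this, the touching graph of $\R$ coincides with the dual triangulation of $G_\SS$, and the representation is proper because every touching is realized by an edge of one of the participating rectangles.
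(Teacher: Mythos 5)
Your starting point matches the paper exactly: span a rectangle $R(f)$ on the two extreme points of each flat $f$ (with a chosen extreme point on each ray for the unbounded flats) and take $\R = \{R(f)\}$.  The paper also proves interior-disjointness by a case analysis on the supporting planes, which is essentially what your first step sketches.

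The gap is in your ``key claim'' in step two, and it is a genuine one.  You assert that for every skeleton edge $e = f \cap f'$, the segment $e$ lies in the interior of exactly one of $R(f), R(f')$ and on the boundary of the other, and you expect this asymmetry to be forced by the zig-zag turning direction.  That is not true.  The saddle endpoint of $e$ is extreme in exactly two of its three incident flats.  When those two happen to be $f$ and $f'$ themselves, the segment $e$ lies on the boundary of \emph{both} $R(f)$ and $R(f')$, giving an edge-to-edge contact with no interior points of either rectangle — which by the paper's definitions is not a touching at all.  The paper explicitly acknowledges this: the direct bounding-box construction only yields a \emph{weak} rectangle contact representation (``the contacts of pairs of rectangles of different orientation can be an edge to edge contact''), and a final, non-optional step expands one rectangle of each such weak pair by a small parallel shift of the offending boundary segment to turn the edge-to-edge contact into a true touching.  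Your proposal never performs this expansion because the false key claim convinces you it is unnecessary; as written the construction does not produce a proper Plattenbau.

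Two smaller remarks.  First, your step-one argument for the perpendicular case is too loose: the fact that $\SS$ is the boundary of a filter does not by itself locate the segment $L$ inside $f \cap f'$; the paper instead reasons directly with the line $\ell = H_f \cap H_{f'}$, shows it meets $\SS$ in a closed interval $I_\SS$, and deduces that if both flats meet $\ell$ then one of the two intersections is all of $I_\SS$ and the other is a skeleton edge, so $(f,f')$ is an edge of $T$.  Second, your concern about the three rectangles meeting at a surface vertex is real but is subsumed in the weak-contact issue above — the resolution is the expansion step, not an orientation argument at the vertex.
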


\begin{proof}
  Choose a point not on~$\VV$ on each of the
  three rays of~$\SS$ and call these points the extreme points of
  their incident unbounded flats. 
  
  The two extreme points~$a_f,b_f$ of a flat~$f$ of~$\SS$ span a rectangle
  ~$R(f)$. Note that the other two corners of~$R(f)$ are~$\max(a_f,b_f)$
  and~$\min(a_f,b_f)$. We claim that the collection of rectangles~$R(f)$ is a
  \emph{weak}\marginpar{\raggedright{\small\em weak representation}} rectangle
  contact representation of the dual triangulation~$T$ of the skeleton
  of~$\SS$. Here weak means that the contacts of pairs of rectangles of
  different orientation can be an edge to edge contact. If~$f$ and~$f'$ share
  an edge~$e$ of the skeleton, then since one of the ends of~$e$ is a saddle
  point of~$\SS$ and thus extreme in two of its incident flats, it is extreme
  for at least one of~$f$ and~$f'$.  This shows that~$e$ is contained in the
  boundary of at least one of the rectangles~$R(f), R(f')$, i.e., the
  intersection of the open interiors of the rectangles is empty.

  Let~$f$ and~$f'$ be two flats. Let~$H_f$ and~$H_{f'}$ be
  the supporting planes.  If~$f$ is contained in an open halfspace~$O$ defined
  by~$H_{f'}$, then~$\max(a_f,b_f)$ and~$\min(a_f,b_f)$, the other two corners
  of~$R(f)$, are also in~$O$, hence~$R(f) \subset O$
  and~$R(f) \cap R(f') = \emptyset$.  If~$f$ intersects~$H_{f'}$ and~$f'$
  intersects~$H_{f}$, then consider the line~$\ell = H_{f'}\cap H_{f}$. This
  line is parallel to one of the axes, hence it intersects~$\SS$ in a closed
  interval~$I_\SS$.  If~$I_f$ and~$I_{f'}$ are the intervals obtained by
  intersecting~$\ell$ with~$f$ and~$f'$ respectively, then one of them
  equals~$I_\SS$ and the other is an edge of the skeleton of~$\SS$,
  i.e.,~$(f,f')$ is an edge of the triangulation~$T$.

 \begin{figure}
  \centering
  \includegraphics[width =.5\textwidth]{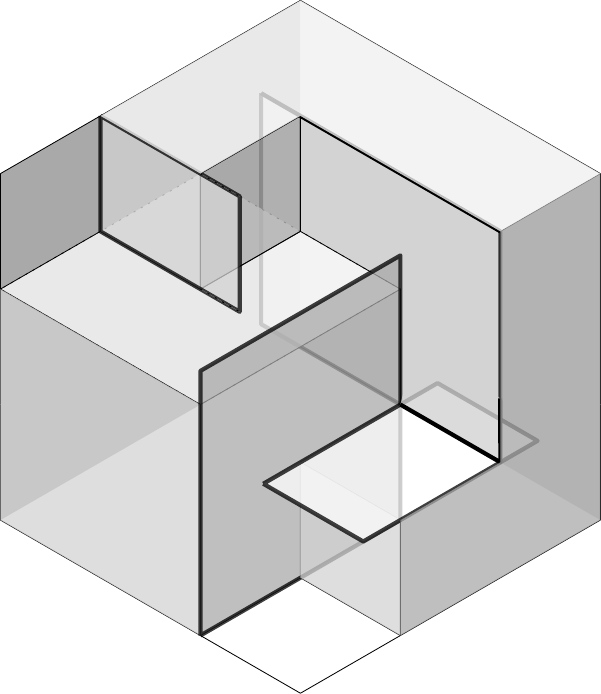}
  \caption{Replacing flats by rectangles and expanding in order to avoid weak contacts.}
  \label{fig:expand}
 \end{figure}
  
  It remains to expand some of the rectangles to change weak contacts
  into true contacts. Let~$e = f\cap f'$ be an edge such that the
  contact of~$R(f)$ and~$R(f')$ is weak. Select one of~$f$ and~$f'$,
  say~$f$. Now expand the rectangle~$R(f)$ with a small parallel shift
  of the boundary segment containing~$e$. This makes the contact of~$R(f)$ and~$R(f')$ a true contact. The expansion can be taken small
  enough as to avoid that new contacts or intersections are introduced.
  Iterating over the edges we eventually get rid of all weak contacts. See \cref{fig:expand} for an illustration.
  
  In an orthogonal surface some flats might be co-planar. But by non-degeneracy all vertices are of degree three. Thus, flats (and the corresponding rectangles) can be perturbed slightly into the orthogonal direction such that co-planarity is avoided. This concludes the construction.
\end{proof}

Recall that we aim at realizing~$T^*$, the dual of the 3-colorable
triangulation~$T$ as the skeleton of an orthogonal surface.  Since~$T$ is
Eulerian its dual~$T^*$ is bipartite. Let~$U$ (black) and $U'$ (white) be the
bipartition of the vertices of~$T^*$ such that the dual~$v_\infty$ of the
outer face of~$T$ is in~$U$.  The critical task is to assign two extreme
vertices to each face of~$T^*$ such that~$v_\infty$ is never assigned.  This has to
be done so that each vertex in~$U$ (except~$v_\infty$) is extremal for exactly
two of the faces.

To solve the assignment problem we will work with an auxiliary
graph~$H_T$. The faces of~$T^*$ which do not contain~$v_\infty$ correspond to
the interior vertices of~$T$, we denote this set with~$V^\circ$.  As the
vertices of~$T^*$ are the facial triangles of~$T$, we think of~$U$ as
representing the black triangles of~$T$.  We also
let~$U^\circ = U - v_\infty$, this is the set of bounded black triangles
of~$T$.  The vertices of~$H_T$ are~$V^\circ\cup U^\circ$ the edges of~$H_T$
correspond to the incidence relation in~$T^*$ and~$T$ respectively,
i.e.,~$v,u$ with~$v\in V^\circ$ and~$u\in U^\circ$ is an edge if vertex~$v$ is
a corner of the black triangle~$u$. A valid assignment of extreme vertices is
equivalent to an orientation of~$H_T$ such that each vertex~$v\in V^\circ$ has
outdegree two and each vertex~$u\in U^\circ$ has indegree two, i.e., the
outdegrees of the vertices are prescribed by the function~$\alpha$
with~$\alpha(v)=2$ for~$v\in V^\circ$ and~$\alpha(u)=\deg(u)-2$
for~$u\in U^\circ$. Since~$|V^\circ|=|U^\circ|=n-3$ it is readily seen that
the sum of the $\alpha$-values of all vertices equals the number of edges
of~$H_T$.

Orientations of graphs with prescribed out-degrees have been studied
e.g.~in~\cite{f-lspg-04}, there it is shown that the following
necessary condition is also sufficient for the existence of an
$\alpha$-orientation.
For all~$W\subset V^\circ$ and~$S\subset U^\circ$ and~$X=W\cup S$
\begin{equation}\label{eqn:alpha}
\sum_{x\in X}\alpha(x) \leq |E[X]| + |E[X,\ovl{X}]|.\tag{$\alpha$}
\end{equation}
Here~$E[X]$ and~$E[X,\ovl{X}]$ denote the set of edges induced by
$X$, and the set of edges in the cut defined by~$X$, respectively.

Inequality~\eqref{eqn:alpha} does not hold for all triangulations~$T$ and
all~$X$. We next identify specific sets~$X$ violating the inequality, they are
associated to certain \textbf{ba}dly \textbf{be}having
\textbf{t}riangle\textbf{s}, which we will call {babets} for short. In
\cref{prop:babet} we then show that babets are the only obstructions for the
validity of~\eqref{eqn:alpha}.

Let~$\Delta$ be a separating triangle of~$T$ such that the faces of $T$
bounding~$\Delta$ from the outside are white. Let~$W$ be the set of vertices
inside~$\Delta$ and let~$S$ be the collection of black triangles of~$T$ which
have all vertices in~$W$. We claim that $X=W\cup S$ is
violating~\eqref{eqn:alpha}. If~$|W| = k$ and~$|S|=s$, then
$\sum_{x\in X}\alpha(x) = 2|W| + |S| = 2k + s$.  The triangulation whose outer
boundary is~$\Delta$ has~$2(k+3)-4$ triangles, half of them, i.e.,~$k+1$, are
black and interior.  The right side of~\eqref{eqn:alpha} is counting the
number of incidences between vertices of~$W$ and black triangles.  Black
triangles in~$\Delta$ have~$3(k+1)$ incidences in total. There are~$k+1 - s$
black triangles which have an incidence with a corner of~$\Delta$ and 3 of them have
incidences with two corners of~$\Delta$. Hence the value on the right side is
$3(k+1) - (k+1-s) -3 = 2k + s -1$.  This shows that the inequality is
violated.  A separating triangle~$\Delta$ of~$T$ with white touching triangles
on the outside is called \term{babet}.

\begin{proposition}\label{prop:babet}
  If~$T$ has no babet, then there is an orientation of~$H_T$
  whose outdegrees are as prescribed by~$\alpha$.
\end{proposition}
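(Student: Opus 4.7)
The plan is to argue the contrapositive of the criterion just recalled from \cite{f-lspg-04}: assuming some $X = W \cup S$ with $W \subseteq V^\circ$ and $S \subseteq U^\circ$ violates~\eqref{eqn:alpha}, I will produce a babet in $T$. The first step is to normalize $S$. Toggling a bounded black triangle $u$ in or out of $S$ changes the discrepancy $\sum_{x\in X}\alpha(x) - \bigl(|E[X]|+|E[X,\ovl X]|\bigr)$ by exactly $k_u - 2$, where $k_u \in \{0,1,2,3\}$ is the number of corners of $u$ that lie in $W$. Hence starting from any violating $X$ I may insert every $u$ with $k_u = 3$ and delete every $u$ with $k_u \le 1$ without reducing the violation; thus WLOG $S = \{u \in U^\circ : \text{all three corners of } u \text{ lie in } W\}$. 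Since $T$ is Eulerian each interior vertex $v$ is incident to exactly $\tfrac12\deg_T(v)$ black triangles, and a direct count yields
\[
\sum_{x\in X}\alpha(x) - \bigl(|E[X]|+|E[X,\ovl X]|\bigr) \;=\; 2|W| + |S| - \tfrac12 \sum_{v \in W}\deg_T(v).
\]

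Next I pick a violating $X$ minimizing $|W|$ and examine the plane subgraph $T[W]$. An analogous toggling argument applied to a single vertex $v\in W$ will show that in a minimal violator $T[W]$ is connected with a single outer boundary cycle $C$ in $T$, and no bounded face of $T[W]$ contains a vertex of $V(T)\setminus W$. Consequently the bounded faces of $T[W]$ partition into the black triangles in $S$ and the white triangles of $T$ with all corners in $W$. Combining the identity $S = \{u : k_u = 3\}$ with the alternation of face colors around every vertex of $T$ (another consequence of $T$ being Eulerian) forces the triangle of $T$ sharing an edge with $C$ from the \emph{outside} to be white; otherwise the third corner of that black outside triangle could be added to $W$ while preserving positivity of the discrepancy, contradicting the minimality of $|W|$.

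Finally I apply Euler's formula to $T[W]$. Its bounded faces being counted as $|S|$ black triangles plus some number of white triangles, Euler's formula expresses the number of edges of $T[W]$ in closed form in $|W|$, $|S|$, the number of interior white triangles, and $|C|$. Substituting this into $\tfrac12 \sum_{v \in W}\deg_T(v)$ and simplifying, the discrepancy $2|W| + |S| - \tfrac12 \sum \deg_T(v)$ reduces to an explicit expression whose sign depends on $|C|$: its strict positivity forces $|C| = 3$. In that extremal case $C$ is a separating triangle of $T$ whose three outer-adjacent triangles are white, i.e., a babet, contradicting the hypothesis.

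The main obstacle I anticipate is the final Euler-formula accounting: pinning down the exact $|C|$-dependence of the discrepancy requires carefully balancing the white-triangle contribution inside $T[W]$ against the boundary-edge contributions from $C$, in a way that is sensitive to the color alternation around $C$. Secondary difficulties are justifying the topological-disk structure of $T[W]$ for a minimal violator, and handling the possibility that $C$ passes through outer vertices of $T$ (which lie outside $V^\circ$ and hence must be treated slightly differently in the toggling step).
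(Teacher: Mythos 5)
Your opening steps track the paper closely: the normalization to $S=\{u: k_u=3\}$ via the toggling discrepancy, and the resulting identity for the discrepancy, are correct and essentially the paper's own computation (in the paper's notation this is $2|W| > 2|S| + \partialW$). After that, however, the proposal diverges and several steps do not go through as stated.

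First, the paper works with the plane graph $T_S$ spanned by the \emph{edges of triangles in $S$}, not with the induced subgraph $T[W]$; these differ (an edge of $T$ with both ends in $W$ need not lie on any black triangle of $S$), and the paper's accounting is tailored to $T_S$. Second, and more seriously, the claim that minimizing $|W|$ forces $T[W]$ to be a hole-free disk is not established and cannot follow from that choice of minimality: filling a hole means \emph{adding} vertices, which increases $|W|$, so a smaller-$|W|$ violator with a hole is not excluded. The paper handles exactly this difficulty with a dedicated argument (the claim that $R\equiv 0\pmod 3$ together with the bound $\partial\gamma\ge 2|\gamma|/3$ for the boundary of any interior big face), which is what reduces the problem to the ``single big face'' situation $g=1$. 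The same confusion appears in the whiteness step: ``could be added to $W$ \dots contradicting the minimality of $|W|$'' is backwards, since enlarging $W$ does not contradict minimality of $|W|$.

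Third, the final ``Euler-formula accounting'' does not yield what you claim. If one carries it out on a disk-like $T[W]$ with boundary cycle $C$ of length $c$, letting $\beta$ be the number of boundary edges whose inner triangle is black and $e_{\mathrm{ext}}$ the number of edges leaving $W$, one gets
\[
2|W| + |S| - \tfrac12\sum_{v\in W}\deg_T(v) \;=\; 2 + \tfrac{c+\beta}{3} - \tfrac{e_{\mathrm{ext}}}{2},
\]
which depends on $\beta$ and $e_{\mathrm{ext}}$ — quantities governed by the structure of $T$ \emph{outside} $T[W]$ — and not on $|C|$ alone. Positivity of this expression does not force $|C|=3$. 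This is precisely why the paper cannot finish with a single Euler computation and instead runs a nontrivial induction on $\rho = |\gamma|/3$ via ear reductions, with a delicate case analysis for $\rho=1$ and $\rho=2$. That inductive descent is the heart of the proof and is missing from your proposal. In short: the setup and the identity you start from are sound, but the hole-elimination step, the whiteness argument, and the final reduction to a separating triangle are all gaps that need the paper's $T_S$ framework, the divisibility/boundary-bound claim, and the ear-reduction induction to fill.
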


Before we prove \cref{prop:babet} and \cref{thm:planar}, let us briefly summarize the procedure.

First, we construct a Plattenbau representation in the babet-free case (\cref{ssec:babet-free}) based on an auxiliary graph~$G$ arising from the bipartition of~$T^*$, and a Schnyder wood~$S$ for~$G$.
We then find an orthogonal surface~$\SS$ based on the Schnyder wood~$S$ and show that the skeleton~$G_\SS$ of~$\SS$ is~$T^*$, which together with \cref{prop:OS-RC} gives a Plattenbau for~$T$.
Then (\cref{sec:patching}), in case~$T$ contains some babets, we cut the triangulation~$T$ along an innermost babet, find orthogonal surfaces for the orthogonal surfaces for the inside and outside, and patch the former into a saddle point of the latter.

Now, let us start with the proof of \cref{prop:babet}.

\begin{proof}
Suppose that there is an~$X=W\cup S$ violating
inequality~\eqref{eqn:alpha}. We are going to modify~$X$
in several steps always maintaining the property that the inequality is
violated. At the end we will be able to point to a babet in~$T$.

Suppose there is a~$u\in S$ with~$a\leq 2$ neighbors in~$W$.  Let
$X' = X-u$ when going from~$X$ to~$X'$ the left side
of~\eqref{eqn:alpha} is loosing~$\deg(u)-2$ while on the right side we
loose the~$\deg(u)-a$ edges of~$H_T$ which are incident to~$u$ but not to
$W$. Since~$a\leq 2$ the set~$X'$ is violating.
From now on we assume that every~$u\in S$ has 3 neighbors in~$W$,
in particular~$\alpha(u) = 1$.

Now the left side of~\eqref{eqn:alpha} equals~$2|W| + |S|$
and for the right side we have~$|E[X]|=3|S|$ and
$E[X,\ovl{X}]$ contains no edge incident to~$S$.
We define~$\partialW = |E[X,\ovl{X}]|$ the notation indicates that
we only have to care of boundary edges of~$W$. The assumption that
Inequality~\eqref{eqn:alpha} is violated 
then becomes~$2|W| + |S| > 3|S| + \partialW$ or equivalently
\begin{equation}\label{eqn:partial}
2|W| > 2|S|+\partialW.\tag{$\partial$}
\end{equation}

We can assume that the subgraph of~$H_T$ induced by~$X$ is connected,
otherwise a connected component would also violate.  Let~$|W|=k$ and
$|S|=s$. The set~$S$ is a set of black triangles in the triangulation $T$
and~$W$ is the set of vertices of these triangles. Let~$T_S$ be the plane
embedding of all edges of triangles of~$S$ as seen in~$T$. Classify the faces
of $T_S$ as black triangles, white triangles and big faces, and let their
numbers be $s$, $t$ and $g$, respectively. We consider the outer face of $T_S$
a big face independent of its size, therefore, $g\geq 1$.  Consider the
triangulation $T_S^+$ obtained by stacking a new vertex in each big face and
connecting it to all the angles of the face, i.e., the degree of the vertex
$z_f$ stacked into face $f$ equals the length $r_f$ of the boundary
of~$f$. Note that $T_S^+$ may have multi-edges but every face of $T_S^+$ is a
triangle so that Euler's formula holds. Let $R=\sum_f r_f$ be the sum of
degrees of the stack vertices. Since $T_S^+$ has $k+g$ vertices it has
$2(k+g)-4$ faces.  However, we also know that $T_S^+$ has $s+t + R$
faces. Counting the edges incident to the triangles of $S$ we obtain
$3s = 3t + R$. Using this to eliminate $t$ we obtain
$2(k+g)-4 = 2s + \frac{2R}{3}$, i.e., $2|W| = 2|S| + 4 - 2g +
\frac{2R}{3}$. With \eqref{eqn:partial} this implies
$4 + \frac{2R}{3} > \partialW + 2g$.

\begin{myclaim}  
 $R$ is divisible by 3.
\end{myclaim}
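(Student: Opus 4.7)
The plan is simply to observe that the claim is an immediate consequence of the edge-count identity $3s = 3t + R$ established right before the claim: rearranging gives $R = 3(s-t)$, which is divisible by $3$. So the only work is to make sure that identity is itself on firm ground.

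For completeness I would briefly re-derive the identity. Because $T$ is Eulerian, its face-$2$-coloring into black and white triangles is proper, so no two black triangles of $T$ share an edge. Consequently every edge of $T_S$ lies in exactly one black triangle of $S$, giving $|E(T_S)| = 3s$. Moreover, since each edge of $T_S$ sits inside a triangle of $S$, none of them is a bridge, so each edge appears on the boundary of exactly two distinct faces of $T_S$; one of these is the black triangle of $S$ containing it, and the other is either a white triangle face (contributing $3t$ in total across all white triangle faces) or a big face (contributing $\sum_f r_f = R$ in total). Equating the two counts yields $3s = 3t + R$, and thus $3 \mid R$.

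I do not foresee any serious obstacle. The only subtlety worth flagging is that a big face's boundary walk may pass through a cut vertex of $T_S$ more than once, but the absence of bridges ensures each edge appears exactly once on the boundary of exactly one non-black-triangle face, so $R = \sum_f r_f$ correctly records edge--face incidences with the big faces. As a cross-check, one may instead apply Euler's formula to the planar triangulation $T_S^+$, which has $k+g$ vertices and $3s+R$ edges (stacking a new vertex in each big face $f$ adds exactly $r_f$ edges): the identity $3s + R = 3(k+g)-6$ again forces $R \equiv 0 \pmod{3}$.
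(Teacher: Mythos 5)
Your proof is correct as far as the stated claim goes, and you are right that it is an immediate corollary of the identity $3s = 3t + R$ recorded just before the claim; your re-derivation of that identity and the Euler-formula cross-check are both sound. But the paper's proof is intentionally doing something stronger. It opens with ``Actually we prove that each $r_f$ is divisible by~$3$,'' and argues locally: for a collection $Y$ of triangles of $T$ with boundary $\gamma$, double counting edge--triangle incidences inside $Y$ yields $3|Y_b|+|\gamma_w| = 3|Y_w|+|\gamma_b|$, hence $|\gamma_w|\equiv|\gamma_b|\pmod3$; choosing $Y$ to be the set of $T$-triangles lying inside a single big face~$f$ gives $|\gamma_b|=0$ (every boundary edge has a black triangle of~$S$ on its other side, hence a white triangle on its $Y$-side) and $|\gamma_w|=r_f$, so $3\mid r_f$ for each big face individually. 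That per-face divisibility is exactly what is used in the lines that follow the claim (``Let $\gamma$ be the boundary cycle of a big face\dots\ We have seen that $|\gamma|\equiv 0\pmod3$,'' and later $r_{f_\infty}=3\rho$). Your global count only controls the sum $R=\sum_f r_f$ and cannot separate the individual summands, so while your proposal does prove the claim as literally worded, substituting it for the paper's argument would leave the subsequent reduction to $g=1$ unsupported.
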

\begin{claimproof}
Actually we prove that each $r_f$ is divisible by 3. Let $Y$ be a
collection of triangles in a 3-colorable triangulation and let $\gamma$ be the
boundary of $Y$, i.e., $\gamma$ is the set of edges incident to exactly one
triangle from $Y$.  Let $Y_b$ and $Y_w$ be the black and white triangles in
$Y$ and let $\gamma_b$ and $\gamma_w$ be the edges of $\gamma$ which are
incident to black and white triangles of $Y$.  Double counting the number of
edges in $Y$ we get $3|Y_b| + |\gamma_w| = 3|Y_w| + |\gamma_s|$, hence,
$|\gamma_w| \equiv |\gamma_s| \mod{3}$. In our case $|\gamma_s|=0$ and
depending on the chosen $Y$ either $R = |\gamma_w|$ or $r_f = |\gamma_w|$.
\end{claimproof}

Let $\gamma$ be the boundary cycle of a big face of $T_S$ which is not the
outer face. We have seen that $|\gamma| \equiv 0 \mod{3}$. We are interested
in the contribution of $\gamma$ to $\partialW$, i.e., in the number of
incidences of vertices of $\gamma$ with black triangles in the inside of the
big face. For convenience we can use multiple copies of a vertex to make
$\gamma$ simple. We let $T_\gamma$ be the triangulation of the hole.  The
claim below implies that $\partial\gamma \geq 2|\gamma|/3$, unless
$|\gamma|\leq 6$ and there is at most one black triangle in $T_\gamma$.
Add all the inner black triangles of $\gamma$ to $S$ and all the inner
vertices to $W$ and consider the effect for the violator inequality
$4 + \frac{2R}{3} > \partialW + 2g$.  In the exceptional case we only have one
black triangle and $|\gamma|=6$, i.e., the left side is reduced by 4 and the
right side by $3+2$. In all other cases the left side of the
violator inequality is reduced by $2|\gamma|/3$ and the right side is reduced
by at least $2|\gamma|/3 + 2$, i.e., violators are preserved.

\begin{myclaim}  
  If $\gamma$ is a simple cycle in a 3-colorable triangulation with
  interior triangulation~$T_\gamma$ and
  $\partial\gamma$ is the number of incidences of vertices of $\gamma$ with
  black triangeles of $T_\gamma$, then
  \begin{itemize}
  \item $\partial\gamma \geq |\gamma| - 3$ if $T_\gamma$ has no interior vertex, and
  \item $\partial\gamma \geq 2|\gamma|/3$ otherwise.
  \end{itemize}
\end{myclaim}

\begin{claimproof}
  We assume that all the faces of $T_\gamma$ incident to an edge of $\gamma$ are white.
  If not then adding white triangles to achieve this property increases $|\gamma|$
  and keeps $\partial\gamma$ the same.
  
  Suppose $\partial\gamma < |\gamma|$.  Then on $\gamma$ we find an
  \textit{ear}, this is a vertex which has no incidence to a black triangle of
  $T_\gamma$, i.e., its degree in $T_\gamma$ is 2.

  We first deal with the case where $T_\gamma$ has no interior vertex, i.e.,
  all the edges not on $\gamma$ are chords.  If $b$ is an ear and $a$, $c$ are
  the neighbors of~$b$ on $\gamma$, then $ac$ is an edge and, if
  $|\gamma|>3$, there is a black triangle $acx$ in $T_\gamma$.  An ear is
  \textit{reducible} if~$b$ has a neighbor on $\gamma$ which is only incident
  to a single black triangle in $T_\gamma$. Assume that $c$ is such a neighbor
  of $b$, then the second neighbor $d$ of $c$ on $\gamma$ has an edge to $x$.
  delete $b$ and $c$ and identify $a$ with $d$ and also identify the edges
  $xa$ and $xd$. This results in a cycle $\gamma'$ with
  $|\gamma'| = |\gamma|-3$. We call this an \textit{ear reduction with center
    $x$}. \cref{fig:redu2} (left and middle) shows sketches of reducible ears
  with $x\in \gamma$.  If $|\gamma|>3$, then there is a reducible ear,
  otherwise the average degree of a vertex would be 4. This is impossible
  because $T_\gamma$ has $2 |\gamma|-3$ edges.

  If starting with $\gamma$ we can perform $m$ reductions, then $|\gamma| = 3m+3$
  and $\partial\gamma = 3m$. This completes the proof in this case.

  Now assume that $T_\gamma$ has an interior vertex. Again there is an ear
  $b$, let $a$, $c$ be the neighbors of~$b$ on $\gamma$. Then $a,c$ is an edge
  and there is a black triangle $acx$ in $T_\gamma$. Now $x$ may also be an
  inner vertex, see \cref{fig:redu2} (right). However, if $x$ is the unique
  common neighbor of $a$ and $d$, then we can perform an \textit{ear reduction
    with center $x$} by identifying $a$ with $d$ as well as the edges $xa$ and
  $xd$. This results in a cycle $\gamma'$ with $|\gamma'| = |\gamma|-3$.
  Note that when~$x$ is not on $\gamma$ we have
  $\partial\gamma = \partial\gamma' +2$, while
  $\partial\gamma = \partial\gamma' +3$ when $x$ belongs to $\gamma$.
  
   \calc_figscale{50}
    \begin{figure}[htb]
    \centerline{\input{redu2.pdftex_t}}
    \caption{Three examples where $b$ is a reducible ear.\label{fig:redu2}}
    \end{figure}
    

If $b$ is a reducible ear but $a$ and $d$ share
several neighbors, then there is an extreme common neighbor $y$ with the
property that the cycle $a,b,c,d,y$ encloses all the common neighbors of~$a$
and $d$.  In this case we perform an ear reduction with center $y$
(since $a$ and $d$ have the same color in every 3-coloring, the reduced graph remains
3-colorable and its faces 2-colorable). The reduction again yields a cycle $\gamma'$ with
$|\gamma'| = |\gamma|-3$ and $\partial\gamma \geq \partial\gamma' +2$.

After a series of $m$ reductions we obtain a cycle $\gamma'$ which has no
reducible ear. Suppose that there remains an interior vertex in the
triangulation $T_{\gamma'}$. Now every ear of $T_{\gamma'}$ has two neighbors
which are incident to at least two black triangles.  This shows that
$\partial\gamma' \geq |\gamma'|$. Hence
$\partial\gamma \geq \partial\gamma' + 2m \geq |\gamma'| +2m = |\gamma| -3m
+2m = |\gamma| -m$. Since $m \leq |\gamma|/3$ we arrive at
$\partial\gamma \geq 2|\gamma|/3$.

Now suppose that $T_{\gamma'}$ has no interior vertex. There has been a last
reduction where $a$ and~$d$ had at least two common neighbors and the
outermost $y$ was on the current outer cycle $\gamma^*$. In this case one of
the edges $ay$ or $dy$ is incident to a black triangle which disappears with
the reduction (cf.~\cref{fig:patch} (right)), hence, in this step
  $\partial\gamma^*$ drops by 4. For the initial $\gamma$ we get:
  $|\gamma| = 3m+3$ and $\partial\gamma \geq 2m+2$.
This completes the proof of the claim.
\end{claimproof}

We have already seen that the claim implies that we may assume 
that $g=1$, i.e.,  
the violator~$T_S$ only has a single big face, the outer face $f_\infty$. 
We write $r_{f_\infty}=3\rho$, the condition for violation becomes
$\partialW < 2\rho +2$.

Let $\gamma$ be the boundary cycle of the unique big face.  While the
outer face of $T$ has to be contained in the big face, we prefer to
think of a drawing of $T_S$ such that the big face is the interior of
$\gamma$.  It will be crucial, however, that in the inner
triangulation of $\gamma$ inherited from~$T$ there is a special black
triangle $\delta_\infty$. The goal is to find a babet in the interior
of $\gamma$. We use induction on $\rho$.

In the case $\rho=1$ the triangle $\gamma$ has $\partialW < 4$
incidences with black triangles from the inside. The unique
configuration with this property is shown in
\cref{fig:babet}. Since the black triangle $\delta_\infty$ is not yet
in the picture one of the triangles must be separating.
If the separating triangle was not the central one it would lead to an
increase of $\partialW$. Hence the white central triangle is separating,
i.e., a babet.

   \calc_figscale{50}
    \begin{figure}[htb]
    \centerline{\input{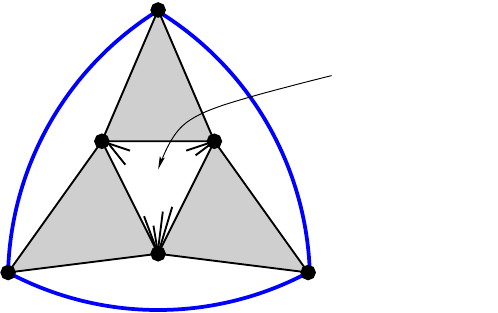tex_t}}
    \caption{Illustration for the case $\rho=1$.\label{fig:babet}}
    \end{figure}
    

Now let $\rho \geq 2$. Assuming a black incident triangle for every
vertex of $\gamma$ we obtain $\partialW \geq 3\rho \geq 2\rho+2$.
Therefore, on $\gamma$ we find an \textit{ear}.
As above we aim for an ear reduction. Let $b$ be a reducible ear with neighbors
$a$ and $c$ such that $c$ is only incident to a single black triangle $acx$
inside $\gamma$ and $d$ is the second common neighbor of $c$ and $x$.
In the proof of Claim 2 we have seen that in this case a reduction is
possible which preserves the violation inequality. 

If $x$ is the unique common neighbor of $a$ and $d$, then the reduction
leads to a decrease of $\partialW$ by 2 or 3 and a decrease of $\rho$ by 1,
whence the reduced cycle remains violating.

If $b$ is reducible but $a$ and $d$ share several neighbors, then we aim
at a reduction whose center $y$ is the extreme common neighbor of $a$ and
$d$. If this cycle does not enclose the black triangle $\delta_\infty$ we can
apply the reduction with center $y$.

The described reductions may decrease $\rho$ until it is 1 whence
there is a babet. In fact we will complete the proof by showing that
when no reduction is possible and $\rho > 1$, then the
violator inequality is not fulfilled.

We first discuss the case where $b$ is reducible but $a$ and $d$ share
several neighbors and $\delta_\infty$ is enclosed in the cycle
$a,b,c,d,y$ where $y$ is the extreme common neighbor of $a$ and $d$.
We show that in this case we can find a cycle $\gamma'$ of length 6
which is a violator, i.e., $\partial\gamma < 6$.

   \calc_figscale{33}
    \begin{figure}[htb]
    \centerline{\input{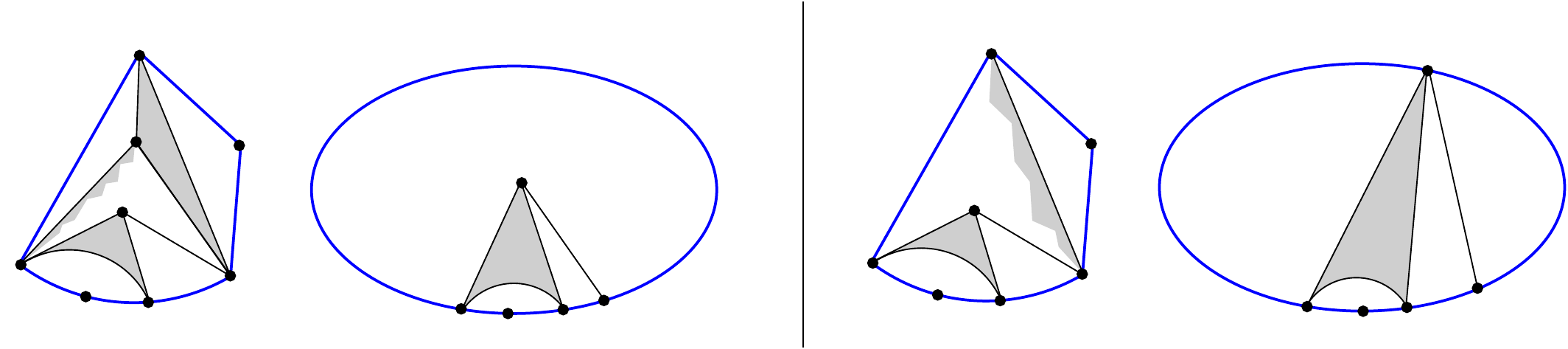tex_t}}
    \caption{Illustration for the subconfigurations generated
from a cycle $a,b,c,d,y$ enclosing~$\delta_\infty$.\label{fig:patch}}
    \end{figure}
    

Suppose $y$ is in the interior of $\gamma$ in this case we take the cycle
$a,b,c,d,y$ add a new common neighbor $y'$ for $a,d,y$ and an ear $y''$ over
the black triangle containing $y'$.  This yields a 6-cycle $\gamma''$ as shown
in \cref{fig:patch} (left 1). In the interior of $\gamma$ we replace the
triangles inside $a,b,c,d,y$ by the 3 triangles of a reducible ear, see
\cref{fig:patch} (left 2) and refer to the cycle with the simplified interior
as $\gamma'$.  Now we compare $\partial\gamma$ with $\partial\gamma'$ and
$\partial\gamma''$ and use the bound previously shown in the claim for
$\gamma'$, i.e., $\partial\gamma' \geq 2|\gamma'|/3 = 2|\gamma|/3$.  Taking
into account that on each of $\gamma'$ and $\gamma''$ we see two incidences
with black triangles which are not counted in $\partial\gamma$ we get
$2|\gamma|/3 + 2 > \partial\gamma = (\partial\gamma'' -2) +
(\partial\gamma'-2) \geq \partial\gamma'' + 2|\gamma|/3 -4$.  Hence
$\partial\gamma'' < 6$, whence $\gamma''$ is also a violator.

If $y$ is a vertex on the cycle $\gamma$ we add a new ear $y''$ either over
$ay$ or over $dy$ depending on which is incident to a black triangle. This
yields a 6-cycle $\gamma''$ as shown in \cref{fig:patch} (right 1).  In the
interior of $\gamma$ we replace the triangles inside $a,b,c,d,y$ by the 3
triangles of a reducible ear, see \cref{fig:patch} (right 2) and refer to the
cycle with the simplified interior as $\gamma'$.  Taking into account that on
$\gamma'$ we see three incidences with black triangles which are not counted
in~$\partial\gamma$ we get
$2|\gamma|/3 + 2 > \partial\gamma \geq \partial\gamma'' + (\partial\gamma'-3)
\geq \partial\gamma'' + 2|\gamma|/3 -3$.  Hence $\partial\gamma'' < 5$, whence
$\gamma''$ is also a violator.

If $\rho \geq 3$ and there are no reducible ears, then both
neighbors of each ear vertex have two incidences with black triangles. Let each
vertex with at least two incidences with black triangles
discharge~$1/2$ to the neighbors, then all vertices
have a weight of at least~1. We get $\partialW \geq 3\rho > 2\rho +2$
and the example is not a violator.

For the case $\rho=2$ we consider circular sequences
$(s_1,s_2,s_3,s_4,s_5,s_6)$ with $\sum_i s_i < 2\rho+2=6$ such that
there is an inner Eulerian triangulation of a 6-gon with only white
triangles touching the 6-gon and $s_i$ black incident triangles at
vertex $v_i$ of the 6-gon. A vertex $v_i$ with $s_i=0$ is an ear.
Clearly, the circular sequence has no $00$ subsequence.  With two ears
which share a neighbor of degree 1, i.e., with a $010$ subsequence,
we have the graph $T_1$ from \cref{fig:6triang}, this triangulation does not
occur in our setting since it does not contain a black $\delta_\infty$
which is vertex disjoint from the outer 6-gon. Making any of the four
triangles of $T_1$ separating so that it can accommodate
$\delta_\infty$ in its interior would make $\sum_i s_i \geq
6$.

   \calc_figscale{50}
    \begin{figure}[htb]
    \centerline{\input{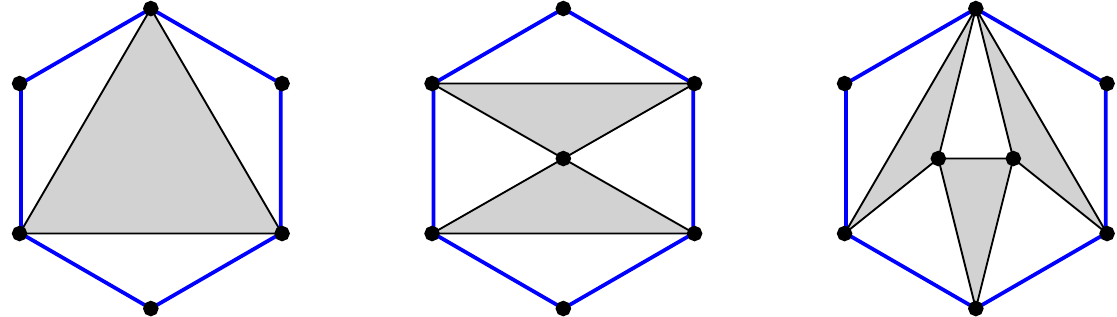tex_t}}
    \caption{Three triangulations with $\rho=2$ and $\partialW < 6$.\label{fig:6triang}}
    \end{figure}
    

With a $0110$ subsequence we have $T_2$. Again there is no
$\delta_\infty$ in $T_2$ and making a triangle separating would make
$\sum_i s_i \geq 6$.  It remains to look at sequences without $00$, and
$010$, and $0110$ but $\sum_i s_i \leq 5$. The sequence $011102$ is the
unique sequence with these properties and there is a unique
corresponding triangulation $T_3$. As with $T_1$ and $T_2$ there is no
$\delta_\infty$ in $T_3$ and making a triangle separating would make
$\sum_i s_i \geq 6$.
\end{proof}

\subsection{Plattenbau Representations in the Babet-Free Case}
\label{ssec:babet-free}

Let $T$ be a 3-colorable triangulation which has no babet.
Due to \cref{prop:babet} we find the $\alpha$-orientation of
$H_T$. This orientation can be represented on the
dual $T^*$ of $T$ as a collection $\CC$ of cycles such that every face
which is not incident to $v_\infty$ contains exactly one fragment of a
cycle which leaves the face in distinct vertices of $U^\circ$ and
every vertex of $U^\circ$ is covered by one of the cycles (Eppstein
and Mumford~\cite{EppM-14} refer to this structure as \textit{cycle
  cover}). The connected components of $\RR^2\setminus\CC$ can be
two-colored in white and pink such that the two sides of each cycle
have distinct colors, the two-coloring is unique if we want the
unbounded region to be colored white. This two-coloring of the plane
induces a partition of the vertices of $U'$ into $U'_w,U'_p$ where
$U'_w$ consists of all vertices living in white regions and $U'_p$
consists of the vertices in pink regions, see \cref{fig:exa-2factor}
for an example.  An important property of the partition is that every
vertex of $u\in U^\circ$ is adjacent to vertices from both classes
$U'_w$ and $U'_p$.  This follows form the fact that a cycle $C$ from
$\CC$ contains $u$, whence two of the edges of $u$ are on one side and
the third is on the other side of $C$.

   \calc_figscale{25}
    \begin{figure}[htb]
    \centerline{\input{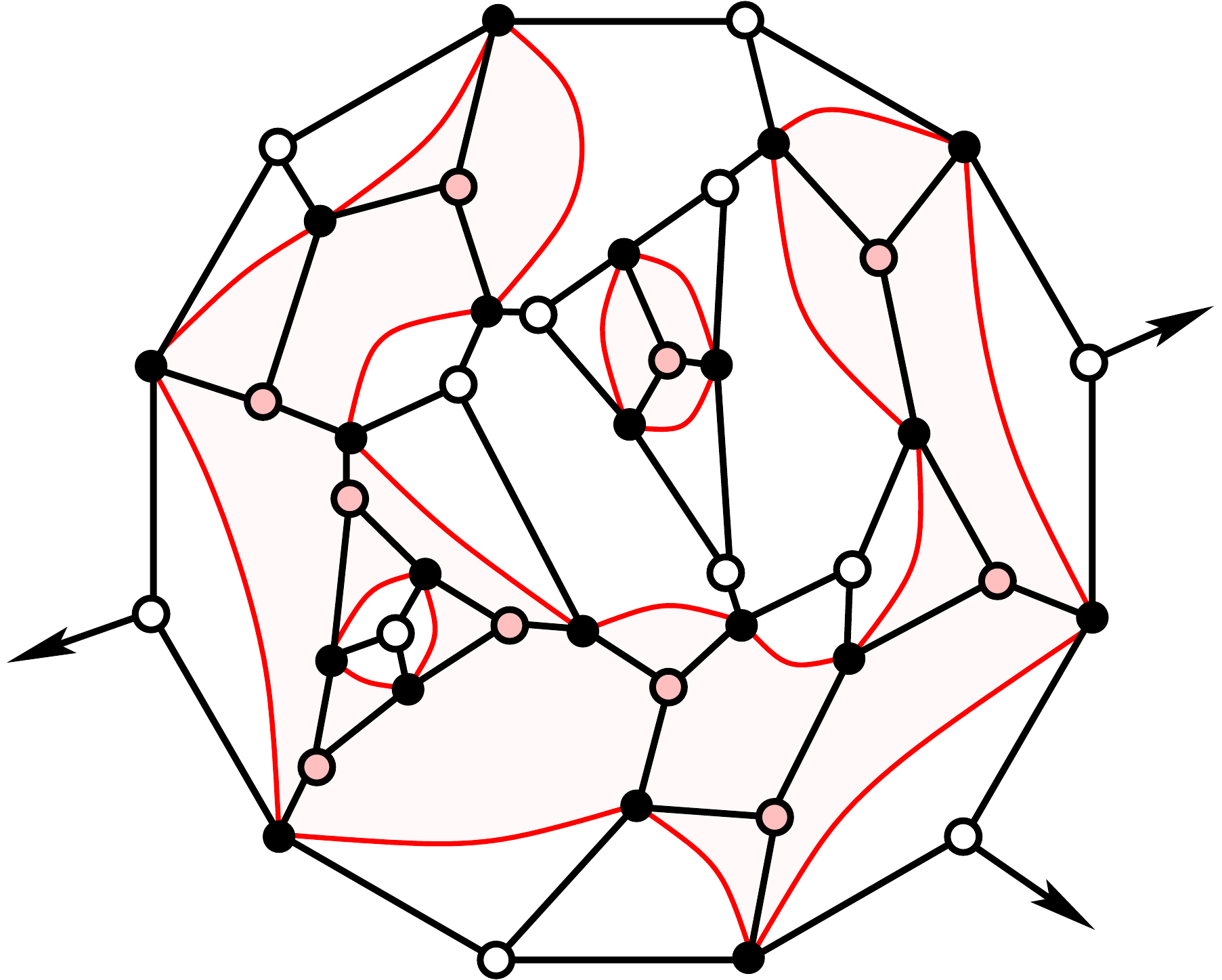tex_t}}
    \caption{Example of a $T^*$ with a cycle cover of
  $U^\circ$ and the induced partition of $U'$.\label{fig:exa-2factor}}
    \end{figure}
    

The partition $U'_w,U^\circ,U'_p$ of the vertices of
$T^*\setminus\{v_\infty\}$ corresponds to the partition into minima, saddle points, maxima
of the vertices of the orthogonal surface with skeleton $T^*$.  To
construct this orthogonal surface we first define a 3-connected planar
graph $G$ whose vertex set is $U'_w$, the edges of $G$ are in
bijection with $U^\circ$ and each bounded face of $G$ contains exactly
one vertex of $U'_p$. This graph $G$ will be decorated with a Schnyder
wood, i.e., an orientation of the edges which obeys the
following rules:

\begin{enumerate}[leftmargin=2.8em,label = (W\arabic*)]
\item On the outer face of $G$ there are three special vertices
  $a_r,a_g,a_b$ colored red, green, and blue in clockwise order.  For
  $c\in\{r,g,b\}$ vertex $a_c$ is equipped with an outward oriented
  half-edge of color $c$. \label{item:W1}
\item
Every edge $e$ is oriented in one or in two opposite directions.
The directions of edges are colored such that
if $e$ is bioriented the two directions have
distinct colors. \label{item:W2}
\item
Every vertex $v$ has outdegree one in each color $c\in\{r,g,b\}$.
The edges $e_r,e_g,e_b$ leaving $v$ in colors $r,g,b$ occur in clockwise
order. Each edge entering $v$ in color $c$ enters $v$ in the 
sector bounded by the two $e_i$ with colors different from $c$
(see \cref{fig:node-prop}). \label{item:W3}
\item
There is no interior face whose boundary is a directed cycle
in one label. \label{item:W4}
\end{enumerate}
   \calc_figscale{25}
    \begin{figure}[htb]
    \centerline{\input{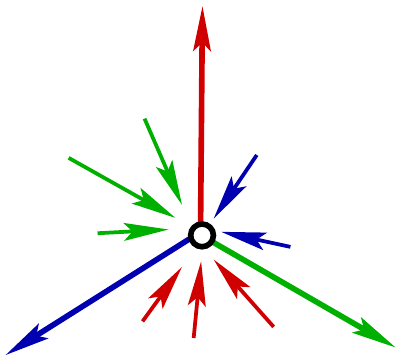tex_t}}
    \caption{Illustration for the vertex condition \ref{item:W3}.\label{fig:node-prop}}
    \end{figure}
    
The construction of $G$ and the Schnyder wood $S$ is in several steps. Consider a 3-coloring
of $T$ with colors $r,g,b$, such that on the outer triangle these colors appear clockwise in the
given order. Note that this implies that all white triangles see $r,g,b$ in
clockwise order and all black triangles see $r,g,b$ in counterclockwise order.
The coloring of $T$ induces a 3-coloring of the edges: for edge $e=v,v'$ use
the unique color which is not used for $v$ and $v'$. This edge-coloring of $T$
can be copied to the dual edges. This yields an edge-coloring of $T^*$ such
that each vertex in $U'$ is incident to edges of colors $r,g,b$ in clockwise
order.  Delete $v_\infty$ from $T^*$ but keep the edges incident to $v_\infty$
as half edges at their other endpoint, we denote the obtained graph as
$T^*_\infty$. The neighbors of~$v_\infty$ are the special vertices
$a_r,a_g,a_b$ for the Schnyder wood.

Next we introduce some new edges. For every black vertex $u\in U^\circ$
which is adjacent to only one vertex white vertex $v$ of $U'_w$ we identify the
unique face $f$ which is adjacent to $u$ but not to $v$. Connect
$u$ to a vertex $v'$ on the boundary of $f$ which belongs to $U'_w$.
Note that the edge $uv'$ is intersected by a cycle from $\CC$ and
that both faces obtained by cutting $f$ via $u,v'$ contain the vertex
$v' \in U'_w$. This shows that we can add edges to all 
vertices of $U^\circ$ which are adjacent to only one vertex in $U'_w$
without introducing crossings. The color of $vu$ is copied to the new
edge $uv'$.
\cref{fig:exa-pdSW} exemplifies the coloring of $T^*_\infty$ together
with the additional edges.

   \calc_figscale{21}
    \begin{figure}[htb]
    \centerline{\input{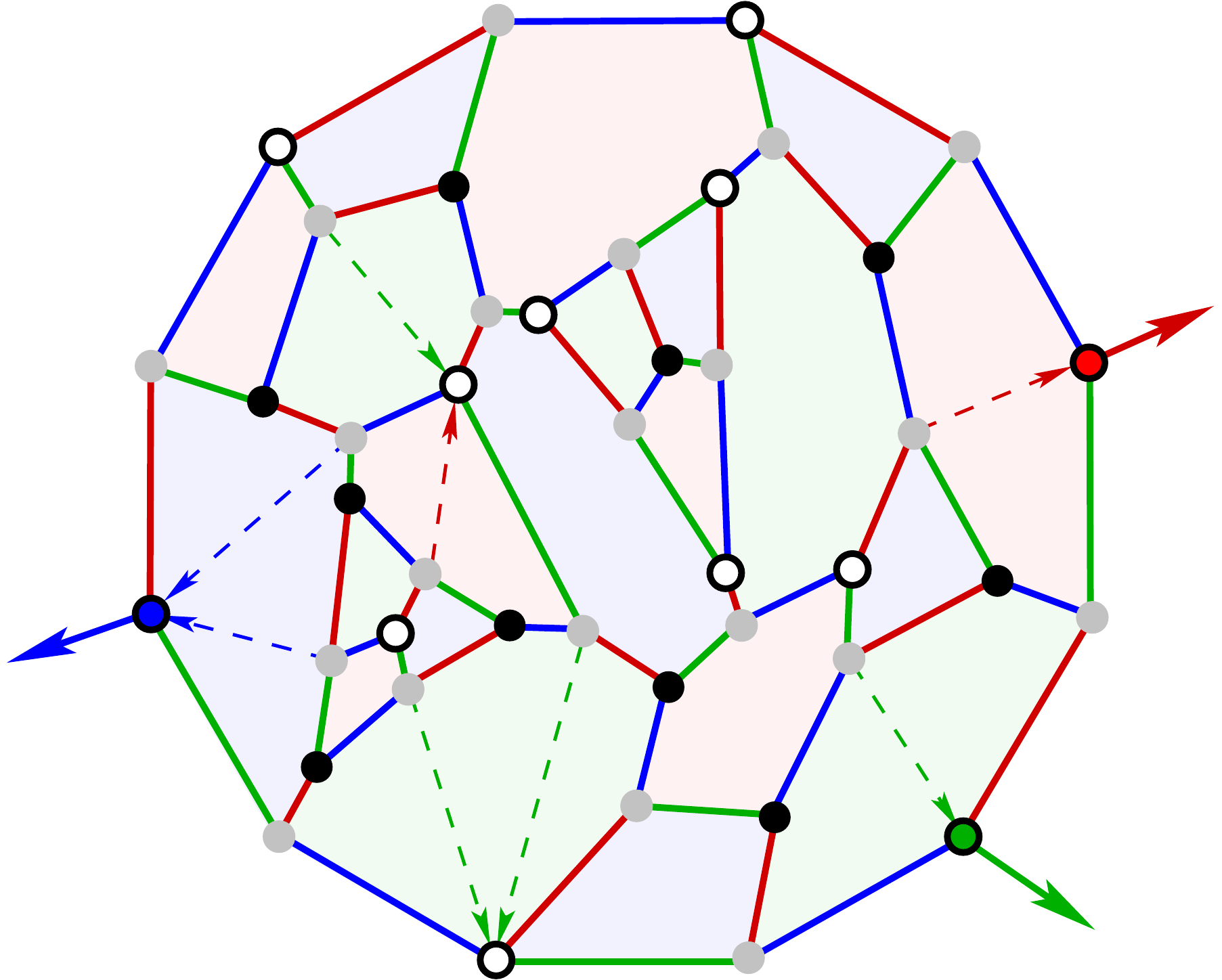tex_t}}
    \caption{The coloring of the edges of $T^*_\infty$ obtained from
  the 3-coloring of $T$. The dashed arrows are the additional edges.\label{fig:exa-pdSW}}
    \end{figure}
    

Now remove all the edges incident to vertices in $U'_p$. In the remaining
graph all the vertices of $U^\circ$ are of degree~2.  We remove these
`subdivision' vertices and melt the two edges into one. The result is $G$. We
claim that orienting the three edges of $v\in U'_p$ which come from an edge of
$T^*_\infty$ as outgoing we obtain a Schnyder wood of $G$. Indeed
\ref{item:W1}, \ref{item:W2}, and \ref{item:W3} follow directly or from what
we have already said. For \ref{item:W4} we need a little argument. Consider a
monochromatic edge $vv'$ and let $f$ be one of the faces of $G$ containing
this edge on the boundary. The other edge on the boundary of $f$ which
contains $v'$ is either incoming in the same color or outgoing in a different
color. This shows that there is no monochromatic directed facial cycle
containing $vv'$. Now consider a face $f$ of $G$ which has no monochromatic
edge. Note that $f$ is a union of faces of $T^*_\infty$ and each face of
$T^*_\infty$ is incident to a pink vertex in $U'_p$. Let $w\in U'_p$ be a
vertex in the interior of $f$ such that in $T^*_\infty$ there is an edge $wu$
with $u\in U^\circ$ on the boundary $\partial f$ of $f$.  Let $c\in\{r,g,b\}$
be the color of the edge $wu$. Vertex $u$ has two neighbors $v,v'$ on
$\partial f$.  By looking at the colors of edges of $T^*_\infty$ we see that
at $v$ and $v'$ the incident edge on $\partial f$ which is different from
$vv'$ has color $c$. In the Schnyder wood we therefore see these outgoing in
color $c$ at $v$ and $v'$ one of them being oriented clockwise, the other
counterclockwise on the boundary of $f$.  This shows that $\partial f$
supports no monochromatic directed cycle.

For the following we rely on the theory of Schnyder woods for
3-connected planar graphs, see e.g.\ \cite{f-cdpgo-01} or
\cite{fz-os3d-06} or \cite{f-lspg-04}. In the Schnyder wood $S$ on $G$ for
every vertex $v$ and every color $c\in\{r,g,b\}$ there is a directed
path $P_c(v)$ of color $c$ from $v$ to $a_c$.  The three paths
$P_r(v),P_g(v),P_b(v)$ pairwise only share the vertex $v$. The three
paths of $v$ partition the interior of $G$ into 3 regions. For
$\{c_1,c_2,c_3\} = \{r,g,b\}$ we let $R_{c_1}(v)$ be the region
bounded by $P_{c_2}$ and $P_{c_3}$. With $v$ we associate the region
vector $(v_r,v_g,v_b)$ where $v_c$ is the number of faces of $G$ in
region $R_{c}(v)$. \cref{fig:exa-pdSW} illustrates regions and region vectors.
   \calc_figscale{30}
    \begin{figure}[htb]
    \centerline{\input{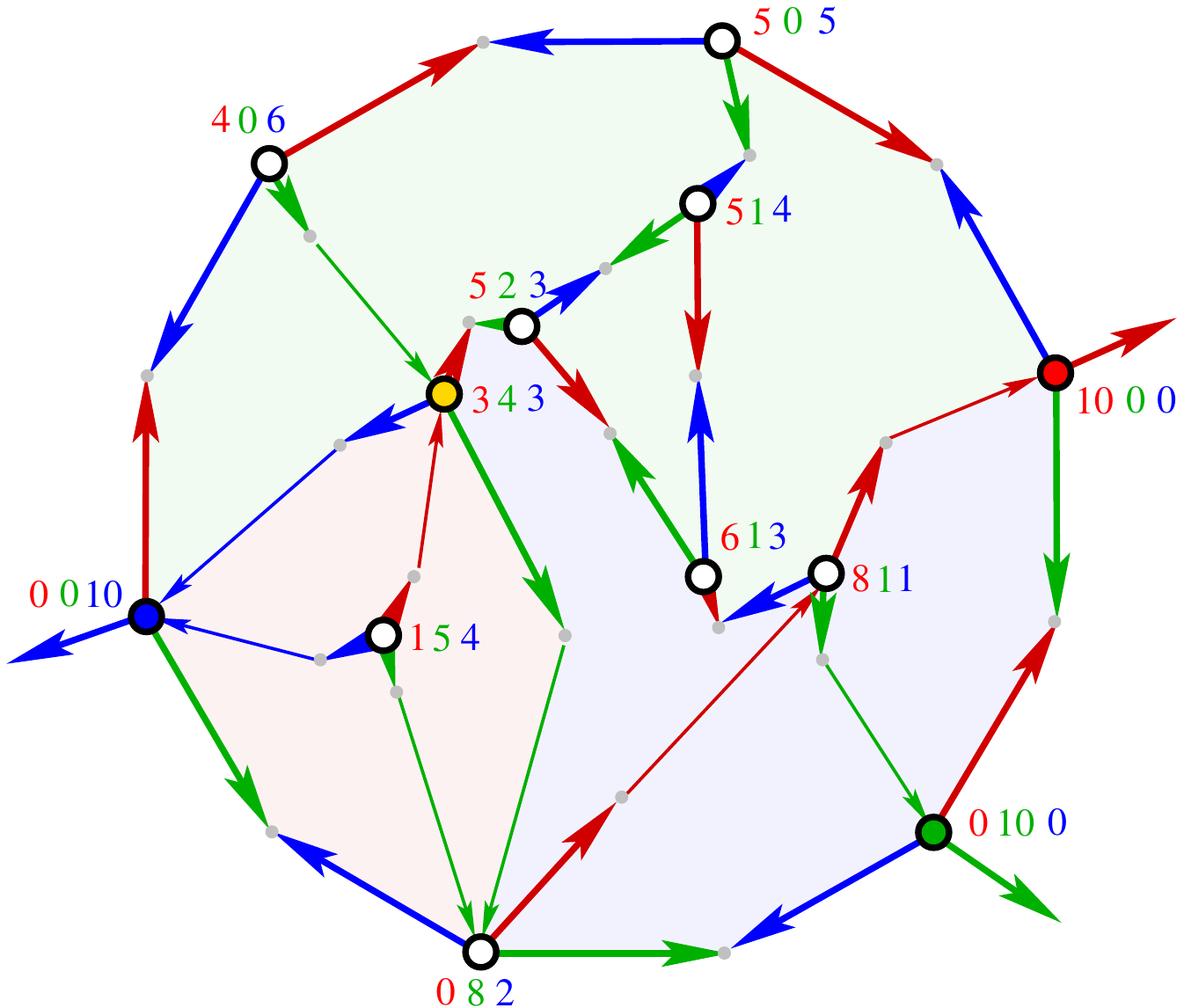tex_t}}
    \caption{A Schnyder wood with a shading indicating the
  regions of the yellow vertex. Vertices are labeled with their
  region vector.\label{fig:exa-G-SW}}
    \end{figure}
    

Let $\VV= \{(v_r,v_g,v_b) \;\colon\; v\in V(G)\}$ be the generating
set for an orthogonal surface $\SS$. In slight abuse of notation we
identify region vectors with their corresponding vertices and say that
$\SS$ is generated by $V(G)$. The minima of $\SS$ are the vertices of
$G$. Moreover, $\SS$ supports the Schnyder wood $S$ in the sense that
every outgoing edge at $v$ in $S$ corresponds to an edge of the
skeleton of $\SS$ such that the direction of the skeleton edge is
given by the color of the edge. In fact from the clockwise order
of directions of skeleton edges at minima, saddle points and maxima
it can be concluded that the skeleton $G_\SS$ of $\SS$ is $T^*$.
With \cref{prop:OS-RC} we obtain a Plattenbau representation of $T$.

   \calc_figscale{45}
    \begin{figure}[htb]
    \centerline{\input{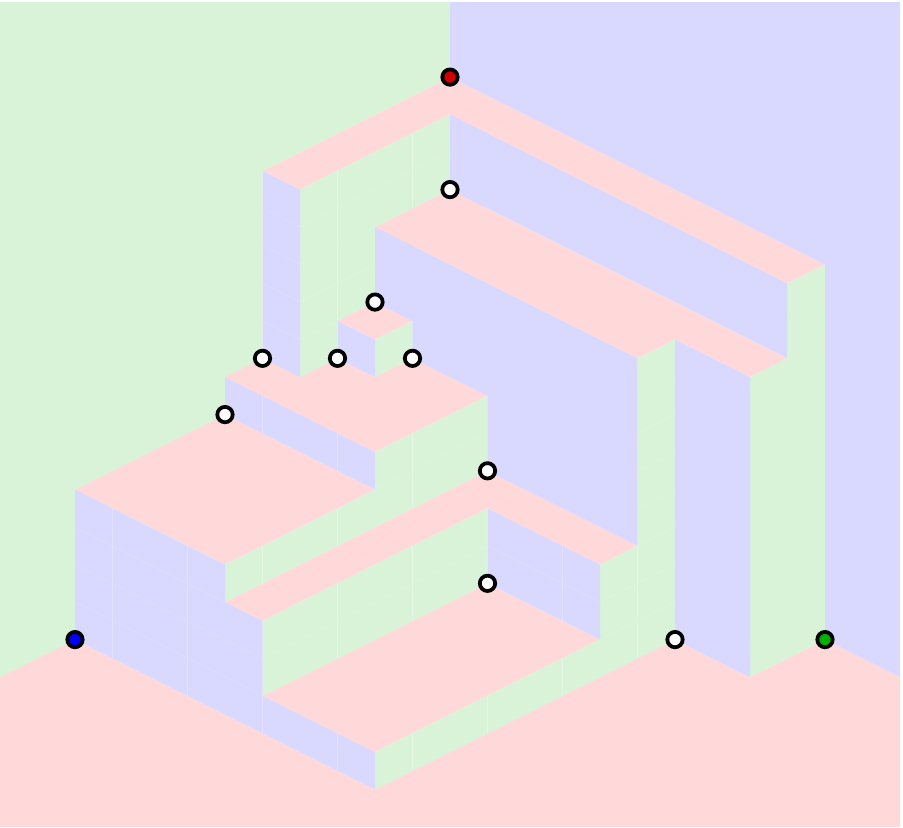tex_t}}
    \caption{The orthogonal surface obtained from the
  region vectors given in \cref{fig:exa-G-SW}.\label{fig:exa-OS}}
    \end{figure}
    

Since the set of $\alpha$-orientations of a fixed planar graph carries the
structure of a distributive lattice~\cite{f-lspg-04}, and we have shown a
correspondence of such a set with the orthogonal surfaces with given skeleton,
we obtain:

\begin{corollary}\label{cor:dist}
  Let $G$ be a planar cubic bipartite graph with specified vertex $v_{\infty}$
  such that the dual triangulation is babet-free. The set of orthogonal
  surfaces with skeleton $G$ and vertex $v_{\infty}$ at infinity carries a
  distributive lattice structure.
\end{corollary}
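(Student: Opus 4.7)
The plan is to transport the well-known distributive lattice structure on $\alpha$-orientations of a plane graph to the set of orthogonal surfaces via the correspondence developed throughout \cref{sec:planar}. Since $G$ is planar, cubic, and bipartite with a distinguished vertex $v_\infty$, its plane dual is an Eulerian (equivalently, $3$-colorable) triangulation $T$ in which $v_\infty$ is dual to the outer face. Asking about orthogonal surfaces with skeleton $G$ is therefore the same as asking about orthogonal surfaces with skeleton $T^*$, which is precisely the setting analyzed above.

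First I would make explicit the bijection between orthogonal surfaces with skeleton $T^*$ (with $v_\infty$ at infinity) and $\alpha$-orientations of the auxiliary graph $H_T$. Given such an orthogonal surface $\SS$, each bounded flat of $\SS$ has two extreme points; recording, for every bounded face of $T^*$ and every corner of $T$ incident to it, whether that corner is extremal, yields an orientation of $H_T$ with outdegree $2$ at each vertex of $V^\circ$ and indegree $2$ at each vertex of $U^\circ$, i.e., an $\alpha$-orientation. Conversely, the construction of \cref{ssec:babet-free} starts from an $\alpha$-orientation of $H_T$, extracts the partition $U'_w, U'_p$, builds the cubic $3$-connected graph $G$ together with its Schnyder wood $S$, and outputs the orthogonal surface generated by the region-vector antichain, whose skeleton is $T^*$. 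One needs to check that these two maps are mutually inverse, up to the natural equivalence of orthogonal surfaces that fixes the skeleton.

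Second, I would invoke the main result of \cite{f-lspg-04}: for any plane graph and any feasible prescribed outdegree function $\alpha$, the set of $\alpha$-orientations forms a distributive lattice under the cover relation that reverses a directed essential cycle. Applying this to $H_T$ and pulling back along the bijection of the previous step yields a distributive lattice structure on the set of orthogonal surfaces with skeleton $G$ and $v_\infty$ at infinity, which is the claim.

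The main obstacle is the verification that the bijection surface $\leftrightarrow$ $\alpha$-orientation is in fact a bijection up to the appropriate equivalence. The forward direction is immediate from the definition of extreme points, but for the backward direction one must argue that the output of \cref{ssec:babet-free} is determined, up to equivalence preserving the skeleton, by the input $\alpha$-orientation, and that distinct $\alpha$-orientations produce inequivalent orthogonal surfaces. This reduces to showing that the assignment of extremal corners determines the coloring of edges of $T^*_\infty$, hence the Schnyder wood $S$ on $G$, and therefore the region vectors generating $\SS$; conversely, the extremal assignment can be read off the flats of $\SS$. Once this verification is in place, the transfer of the distributive lattice is automatic.
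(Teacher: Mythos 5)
Your proposal matches the paper's argument: the paper also derives \cref{cor:dist} by combining the distributive lattice on $\alpha$-orientations from \cite{f-lspg-04} with the correspondence, built in \cref{ssec:babet-free}, between $\alpha$-orientations of $H_T$ and orthogonal surfaces with skeleton $T^*$. Your extra care about verifying that the two maps are mutually inverse (up to equivalence fixing the skeleton) is a reasonable elaboration of a step the paper leaves implicit, not a different route.
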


\subsection{Plattenbau Representations in the Presence of Babets}
\label{sec:patching}
\def\triang{\vartriangle}

Let $T$ be a 3-colorable triangulation, suppose that $T$ contains
babets. Being separating triangles babets can be nested, let $\BB$ be
the family of basic babets of $T$, i.e., of babets which are not contained in
the interior of another babet. Let $T_\triang$ be the triangulation
obtained from $T$ by cleaning all the babets, i.e., removing the
interior vertices and their incident edges from all babets $B\in
\BB$. Clearly $T_\triang$ is 3-colorable and babet-free. Triangles
which have been babets are black. With the method from the previous
subsection we get an orthogonal surface $\SS_\triang$ for $T_\triang$. In this
representation triangles which have been babets correspond to saddle
points. For later reference let $u_B$ be the saddle point
corresponding to $B\in \BB$.

For each babet $B\in \BB$ let $T_B$ the inside triangulation of $B$ in
$T$. Clearly, $T_B$ is 3-colorable, hence, its triangles can be
colored black and white with the outer face being black (note that this
coloring of $T_B$ is obtained from the coloring of triangles in $T$
by exchanging black and white). Assuming that $T_B$ has no babet
we obtain an orthogonal surface $\SS_B$ for $T_B$. 

The construction of the orthogonal surface $\SS_B$ works with the
assumption that the vertices of the outer face, i.e., of the triangle
$B$ are colored $r,g,b$ in clockwise order. The same assumption for
the full triangulation $T$ implies that the vertices of $B$ are
colored $r,b,g$ in clockwise order. 

The goal is to patch $\SS_B$ at the saddle point $u_B$ to
$\SS_\triang$ so that the flats corresponding to a vertex
of $B$ in the two orthogonal surfaces are coplanar.
Let $f_r$, $f_g$, and $f_b$ be the red, green and blue flat at $u_B$
in $\SS_\triang$, they represent the vertices $v_r,v_g,v_b$ of $B$
with their color in $T$. At $u_B$ exactly one of the three flats has a
concave angle, we assume that this flat is $f_r$, the other cases are
completely symmetric. The point $u_B$ is an interior point of the
rectangle $R = R(f_r)$ spanned by the extreme points of $f_r$.
The point $u_B$ is the apex of a convex corner whose sides
coincide locally with $R\setminus f_r$, $f_g$ and $f_b$.
In this corner we see the colors of the flats in clockwise order
as $r,b,g$. Hence, we can patch and appropriately scaled down
copy of $\SS_B$ into this corner
such that the red outer flat of $\SS_B$ becomes part of $R\setminus
f_r$, while the green outer flat of $\SS_B$ becomes part of $f_b$
and the blue outer flat of $\SS_B$ becomes part of $f_g$.
With the technique of \cref{prop:OS-RC} we obtain a rectangle
contact representation of the subgraph of $T$ induced by all the
vertices of $T$ which are represented by flats of
$\SS_\triang$ and $\SS_B$. See \cref{fig:patching} for an illustration.

 \begin{figure}[htb]
  \centering
  \includegraphics[width =.49\textwidth]{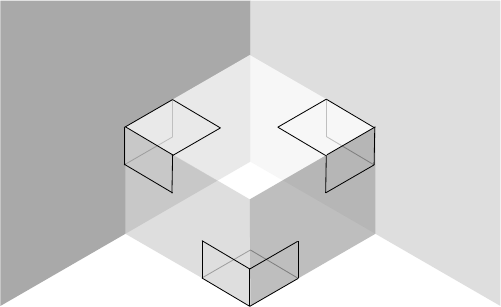}
  \caption{An orthogonal surface with three saddle points. The frames indicate how to patch
    a small orthogonal surfaces at the respective saddle points.}
  \label{fig:patching}
 \end{figure}

Repeating the procedure for further babets in $\BB$ and for babets
which may occur in triangulations $T_B$ we eventually obtain an
rectangle contact representation of $T$.
This completes the proof of \cref{thm:planar}.

\subsection{Comments on Related Work}
\label{ssec:EM+G}

%

Eppstein and Mumford~\cite{EppM-14} study orthogonal polytopes and their
graphs.  They define \term{corner polyhedra} as polytopes obtained from what
we call an orthogonal surface by restricting the surface to the bounded flats
and connecting their boundary to the origin $\mathbf{0}=(0,0,0)$, this
replaces the 3 unbounded flats of an orthogonal surface by three flats closing
the polytope. Eppstein and Mumford show that the skeleton graphs of corner
polyhedra are exactly the cubic bipartite 3-connected graphs with the
property that every separating triangle of the planar dual graph has the same
parity. This is equivalent to our characterization of these graphs as duals of
3-colorable triangulations with admit a choice of the outer face such that
there are no babets, see~\cref{prop:babet}.

A major part of their proof is devoted to the construction of a rooted cycle
cover, respectively, to the investigation of necessary and sufficient
conditions for the existence of such a cycle cover. The proof is based on a
set of operations that allow any 4-connected Eulerian triangulation to be
reduced to a smaller one. In contrast we show the equivalent existence of an
$\alpha$-orientation with a counting argument. Given a cycle cover Eppstein
and Mumford provide a construction of an appropriate orthogonal surface from
the combinatorial data by combining the coordinates obtained from plane
drawings of three orthogonal projections. In contrast we refer to the
established theory of Schnyder woods and their relation to orthogonal
structures to get the results.

As a more general class than corner polyhedra Eppstein and Mumford define
\term{xyz polyhedra} as orthogonal polytopes with the property that each
axis-parallel line through a vertex contains exactly one additional vertex.
They characterize the skeletons of them as cubic bipartite 3-connected graphs,
i.e., as the duals of 3-colorable triangulations.
Modulo the `reflection' of the three outer flats this corresponds to our
\cref{thm:planar}. The main step in the proof is the gluing of
an orthogonal surface into a corner of another orthogonal surface,
see \cref{sec:patching} and also Fig.~29 in~\cite{EppM-14}. 
\medskip

A recent paper of Gon\c{c}alves~\cite{g-3cpghaicru3s-19} can be used as a
basis for yet another proof of~\cref{thm:planar}, i.e., a proof of the
characterization of xyz polyhedra. Gon\c{c}alves uses a system of linear
equations to construct a \term{TC-scheme} (triangle contact scheme) for a
given 3-colorable triangulation. The TC-scheme comes very close to a segment
contact representation with segments of 3 slopes for the input graph, however,
there can be degeneracies: segments may degenerate to points (this relates to
babets) and segments ending on two sides of another segment may have
coinciding endpoints. The TC-scheme can be transformed into an orthogonal
surface. First adjust the directions to have slopes $0$, $+\frac{\pi}{3}$, and
$-\frac{\pi}{3}$, then add an orthogonal peak in each gray
triangle\footnote{This refers to the two color classes of the triangles of the
  TC-scheme, not to the two classes of triangles of the original
  triangulation.}  and an orthogonal valley in each white triangle, and extend
the outer flats. This yields an orthogonal surface. If there are no
degeneracies the orthogonal surface properly represents the triangulation via
flat contacts. Degeneracies of the TC-scheme translate into corners of degree
6 in the orthogonal surface, they can be resolved by \emph{shifting flats}
(c.f.~\cite{fz-os3d-06} for details on flat shifting). Finally as in the other
two approaches babets have to be recovered by patching their orthogonal
surface into corners of the surface.

   \calc_figscale{12}
    \begin{figure}[htb]
    \centerline{\input{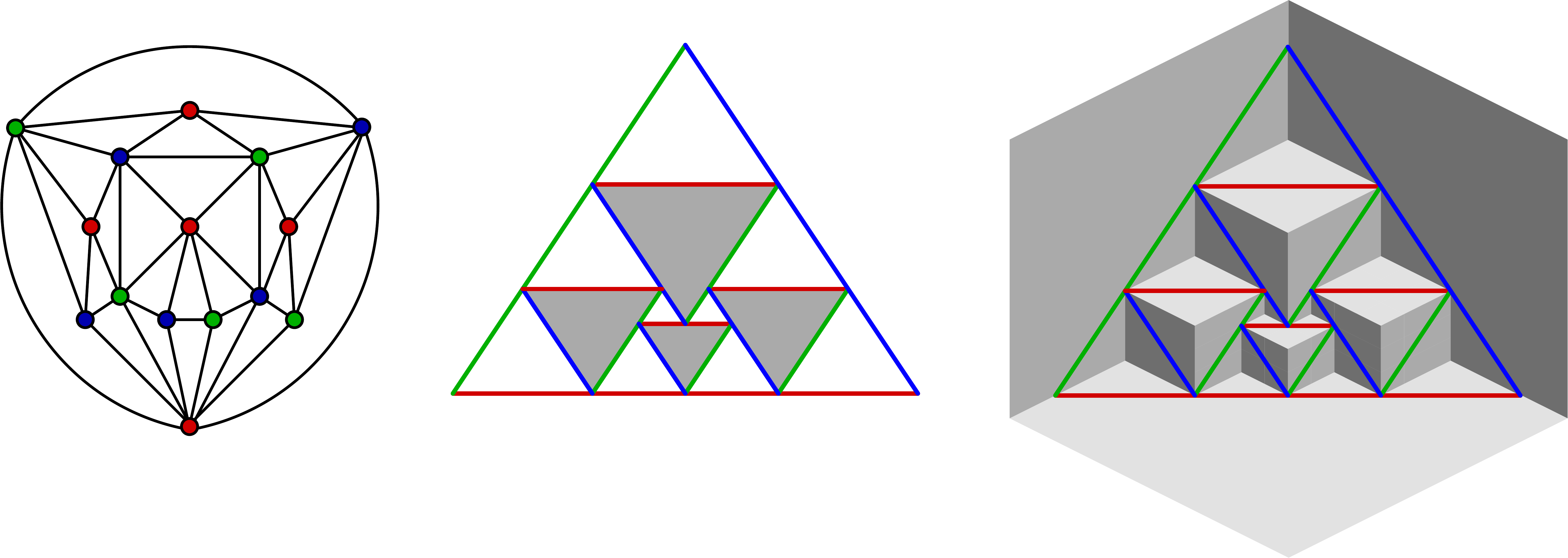tex_t}}
    \caption{A 3-colored triangulation, a TC-scheme of
  the triangulation and the corresponding orthogonal surface.\label{fig:tc-scheme}}
    \end{figure}
    

A nice aspect of this approach is that the partition of white
triangles of the triangulation into peak and valley triangles is
done by solving the linear system, no need of computing an
$\alpha$-orientation or a cycle cover for this task.

\section{Generic Boxed Plattenbauten and Octahedrations}
\label{sec:proper-boxed}
 
In this section we characterize the touching graphs of generic boxed
Plattenbauten, that is, we prove \cref{thm:proper-boxed}. Furthermore, we
provide iterative constructions for generic boxed Plattenbauten from smaller
ones in Subsection~\ref{subsec:iterative}. Here is the theorem again.

\OCTA*

First, as noted earlier, in a generic Plattenbau for any two touching
rectangles~$R,R'$ one rectangle, say~$R$ uses part of one of its edges for
this incidence and does not use this edge for any other incidence.  We denote
this as~$R \to R'$ and remark that this orientation has already been used
in the proof of \cref{obs:proper-gives-sparse}.

Second, in any generic boxed Plattenbau~$\R$ there are six rectangles that are
incident to the unbounded region.  We refer to them as \term{outer rectangles}
and to the six corresponding vertices in the touching graph~$G$ for~$\R$ as
the \emph{outer vertices}.  The corners incident to three outer rectangles are
the \term{outer corners}, and the inner regions/cells of~$\R$ will be called
\term{rooms}.

Whenever we have specified some vertices of a graph to be outer vertices, this defines inner vertices, outer edges, and inner edges as follows:
The \emph{inner vertices} are exactly the vertices that are not outer vertices; the \emph{outer edges} are those between two outer vertices; the \emph{inner edges} are those with at least one inner vertex as endpoint.
We shall use these notions for a Plattenbau graph, as well as for some planar quadrangulations we encounter along the way.


Let us start with the necessity of \crefrange{P1:outer-octahedron}{P4:common-neighbors} in \cref{thm:proper-boxed}.

\begin{proposition}\label{prop:boxed-necessity}
 Every touching graph of a generic boxed Plattenbau satisfies \crefrange{P1:outer-octahedron}{P4:common-neighbors} in \cref{thm:proper-boxed}.
\end{proposition}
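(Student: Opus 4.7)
The plan is to verify the four conditions~\ref{P1:outer-octahedron}--\ref{P4:common-neighbors} in turn, working directly with the geometric structure of~$\R$. For~\ref{P1:outer-octahedron}, the six outer rectangles are the six faces of the bounding box: opposite faces lie in disjoint parallel planes and do not touch, while any two non-opposite faces share a box edge and are adjacent in~$G$ by the boxed convention, so the outer vertices induce an octahedron; connectivity follows since every rectangle bounds some box-room and rooms are linked by shared walls. For~\ref{P2:4-orientation} I reuse the arc orientation from the proof of~\cref{obs:proper-gives-sparse}, setting~$R \to R'$ iff an edge of~$R$ lies in~$R'$. Properness gives at least one arc per edge, and the boxed property forces every one of the four edges of every rectangle to lie in another rectangle, since the box-rooms adjacent to that edge must close off with a perpendicular wall; hence every vertex has out-degree exactly~$4$. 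Combined with~$|E(G)| = 4n-12$ from~\cref{obs:proper-gives-sparse} this forces exactly~$12$ bidirected edges, and since each of the~$12$ box edges already produces a bidirected outer edge, these exhaust all bidirected edges.

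For~\ref{P3:quadrangulation}, fix~$v$ with rectangle~$R$ and embed~$G[N(v)]$ on the topological sphere~$S^2_R$ obtained from a thin tube around~$R$, as in the proof of~\cref{obs:easy}: on each side of~$R$ the in-neighbors from that side induce a planar subdivision of~$R$, glued along the equator where the four out-neighbors lie. The boxed hypothesis makes each cell of this subdivision a rectangle, namely the face on~$R$ of an adjacent box-room. Its boundary consists of exactly four neighbors of~$v$, and any two consecutive ones share the vertical room-edge above (or below) the corresponding corner of the cell, hence touch in~$\R$; so every face of~$SQ(v)$ is a~$4$-cycle. The four out-neighbors of~$v$ trace the equator and form the distinguished~$4$-cycle of out-neighbors. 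If~$v$ is outer then the halfspace outside the bounding box contains no further rectangle, so this~$4$-cycle bounds the corresponding exterior face of~$SQ(v)$.

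Finally for~\ref{P4:common-neighbors}, observe that~$N(u) \cap N(v) = N_{SQ(v)}(u) = N_{SQ(u)}(v)$, so the two cyclic orders in the statement are read off from the boundaries of~$R_u$'s cell on~$S^2_{R_v}$ and of~$R_v$'s cell on~$S^2_{R_u}$, respectively. Both boundaries are concentric loops around the common touching segment~$L = R_u \cap R_v$, seen from opposite sides of the touching; an orientation-consistent traversal of one therefore corresponds to an orientation-reversed traversal of the other around~$L$, which yields the desired reversal of cyclic orders. I expect~\ref{P4:common-neighbors} to be the main obstacle: one must carefully match each common neighbor~$w \in C$ with the correct 1-cell in both spherical subdivisions and verify that the outward normals of the two tubular spheres induce opposite orientations on the link of~$L$.
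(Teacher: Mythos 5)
Your proof follows essentially the same approach as the paper's: \cref{P1:outer-octahedron} from the definition of a boxed Plattenbau, \cref{P2:4-orientation} via the orientation $R \to R'$ already used in \cref{obs:proper-gives-sparse}, \cref{P3:quadrangulation} via the spherical embedding of~$N(v)$ from \cref{obs:easy} together with edge-maximality forced by boxedness, and \cref{P4:common-neighbors} via the circle of common neighbours around~$R_u \cap R_v$ seen from opposite sides. You fill in some detail the paper leaves implicit---notably that out-degree exactly~$4$ together with~$|E|=4n-12$ from \cref{obs:proper-gives-sparse} forces exactly~$12$ bidirected edges, hence that the outer edges are the only bidirected ones---but the underlying reasoning is the same.
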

\begin{proof}
 \cref{P1:outer-octahedron} follows directly from the definition of boxed Plattenbauten.
 Also, for \cref{P2:4-orientation} simply orient each edge towards the end-vertex whose rectangle has interior points in the intersection.
 This way, any edge between two outer vertices is bidirected, which is in accordance with \cref{P2:4-orientation}.
 Now, \cref{P3:quadrangulation} follows from \cref{obs:easy}~\cref{enum:planar} together with the fact that edge-maximal planar bipartite graphs are quadrangulations.
 Indeed, if the rectangles on one side of a given rectangle would not induce a quadrangulation, then there would be a rectangle with a free edge and $\R$ would not be boxed.
 Let us finally argue \cref{P4:common-neighbors}.
 The common neighbors of two vertices $u,v$ lie on the circle that is the intersection of the spheres with centers $SQ(u)$ and $SQ(v)$, respectively.
 Since $u$ and $v$ are on different sides of the intersection, $u,v$ see the vertices on the circle in opposite order.  
 See \cref{fig:common-neighborhood} for an illustration.
\end{proof}

\begin{figure}
 \centering
 \includegraphics[width=0.5\textwidth]{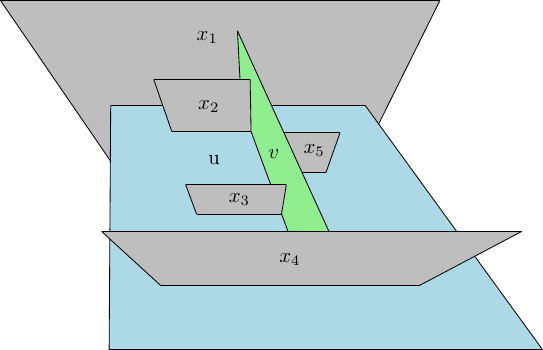}
 \hspace{2em}
 \includegraphics{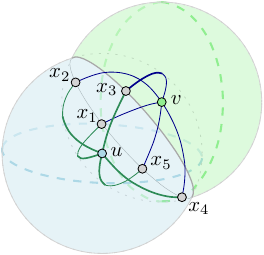} 
 \caption{Two rectangles (green and blue) and their common neighborhood.}
 \label{fig:common-neighborhood}
\end{figure}

Next, we prove the sufficiency in \cref{thm:proper-boxed}, i.e., for every graph~$G$ satisfying \crefrange{P1:outer-octahedron}{P4:common-neighbors} we find a generic boxed Plattenbau with touching graph~$G$.

Fix a graph~$G = (V,E)$ with six outer vertices and edge orientation fulfilling \crefrange{P1:outer-octahedron}{P4:common-neighbors}.
For each vertex~$v \in V$ denote by~$SQ(v)$ the spherical quadrangulation induced by~$N(v)$ given in \cref{P3:quadrangulation}.
%
By \cref{P3:quadrangulation}, the out-neighbors of vertex~$v$ induce a 4-cycle in~$SQ(v)$, which we call the \term{equator}~$O_v$ of~$SQ(v)$.
The equator~$O_v$ splits the spherical quadrangulation~$SQ(v)$ into two \term{hemispheres}, each being a plane embedded quadrangulation with outer face~$O_v$ with the property that each vertex of~$SQ(v) - O_v$ is contained in exactly one hemisphere.
The vertices of~$O_v$ are the outer vertices of either hemisphere.
Note that one hemisphere (or even both) may be trivial, namely when the equator bounds a face of~$SQ(v)$.

We proceed with a number of claims.
 
\begin{myclaim}\label{claim:hemisphere}
 In each hemisphere, each inner vertex has exactly two outgoing edges and no outer vertex has an outgoing inner edge.
\end{myclaim}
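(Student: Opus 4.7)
The plan is to prove the two assertions in turn: first (a) by a local bipartiteness argument inside $SQ(w)$, then (b) by a double-counting of outdegrees in $H$ that leans on (a) together with a small octahedral structure fact from \cref{P1:outer-octahedron} and \cref{P2:4-orientation}.

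For (a), fix an inner vertex $w$ of $H$. Since $w\in N(v)\setminus O_v$, condition \cref{P2:4-orientation} forces the edge $wv$ to be oriented $w\to v$, so $v\in O_w$. Write the 4-cycle $O_w$ as $(v,w_1,w_3,w_2)$, where $w_1,w_2$ are the two neighbors of $v$ on $O_w$ and $w_3$ is opposite to $v$. The crucial observation is that $SQ(w)$ is a spherical quadrangulation, so every face has length four, all cycles are even, and hence $SQ(w)$ is bipartite. Since $v$ and $w_3$ are antipodal on the 4-face $O_w$, they lie in the same bipartition class and therefore are not adjacent in $SQ(w)=G[N(w)]$; this gives $w_3\notin N(v)$. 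Thus, among the four out-neighbors $\{v,w_1,w_2,w_3\}$ of $w$, only $w_1,w_2$ belong to $N(v)$, so the outgoing edges of $w$ in $SQ(v)$ are exactly $w\to w_1$ and $w\to w_2$. Planarity confines all neighbors of the interior vertex $w$ to the closed hemisphere $\overline{H}$, so both edges lie in $H$, yielding exactly two.

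For (b), let $k$ be the number of inner vertices of $H$, $b_H$ the number of bidirected edges of $H$, and $b_1$ the number of bidirected edges on $O_v$. Since $H$ is a plane quadrangulation with outer 4-face $O_v$, it has $n_H=4+k$ vertices and $e_H=2n_H-4=2k+4$ edges, so the total outdegree in $H$ is $e_H+b_H=2k+4+b_H$. By (a) the inner vertices contribute exactly $2k$, so the outer vertices collectively contribute $4+b_H$; of this, $4+b_1$ comes from the $O_v$-edges themselves. Subtracting, the total outdegree of outer vertices along \emph{inner} edges of $H$ equals $b_H-b_1$, which is precisely the number of bidirected inner edges of $H$. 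It therefore suffices to show this number is zero.

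By \cref{P2:4-orientation}, a bidirected edge has both endpoints outer in $G$; hence it is enough to show every inner vertex of $H$ is inner in $G$, or equivalently that every outer-in-$G$ neighbor of $v$ lies in $O_v$. If $v$ is outer in $G$, then \cref{P1:outer-octahedron} gives $v$ four outer-in-$G$ neighbors (its octahedron-neighbors), which by \cref{P2:4-orientation} are all out-neighbors of $v$ via bidirected edges, so $|O_v|=4$ consists of exactly these. If $v$ is inner in $G$, then for any outer-in-$G$ neighbor $u$ of $v$ the preceding case applied to $u$ yields that $O_u$ equals $u$'s four octahedron-neighbors (all outer); since $v$ is inner we have $v\notin O_u$, and as $vu$ cannot be bidirected this forces the orientation $v\to u$, placing $u\in O_v$. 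Hence no inner edge of $H$ is bidirected, $b_H=b_1$, and (b) follows. The main conceptual hurdle is really (a), where the 4-cycle structure of the equator from \cref{P3:quadrangulation} must be combined with the bipartiteness of the quadrangulation to eliminate the third potential out-neighbor of $w$ in $N(v)$; after that, (b) is mostly bookkeeping plus the short octahedral observation above.
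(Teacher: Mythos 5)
Your proof is correct, and part (a) takes a genuinely different route from the paper's. Both arguments begin the same way: since the edge $wv$ is oriented $w\to v$, the vertex $v$ lies on the equator $O_w$, and its two neighbours $w_1,w_2$ on $O_w$ belong to $O_w\cap N(v)$ and hence are out-neighbours of $w$ inside $SQ(v)$. The paper stops there with ``at least two'' and upgrades to ``exactly two'' by a global edge count (the hemisphere is a plane quadrangulation with $k$ inner vertices, hence exactly $2k$ inner edges). You instead finish the job locally: the fourth potential out-neighbour $w_3$ is antipodal to $v$ on the $4$-cycle $O_w$, and since $SQ(w)$ is bipartite, $v$ and $w_3$ lie in the same colour class and are therefore non-adjacent, so $w_3\notin N(v)$. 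This local bipartiteness observation is a clean alternative that avoids the hemisphere count entirely in part (a). One small slip: you call $O_w$ a ``4-face,'' but $O_w$ is only guaranteed to be an induced $4$-cycle of $SQ(w)$, not a face (\cref{P3:quadrangulation} only makes it a face when $w$ is an outer vertex). Fortunately your argument only uses the $4$-cycle property, so nothing breaks.

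For part (b), both proofs reduce to the same hemisphere edge count, but your version makes fully explicit a point the paper treats quite tersely. After the count, one must rule out bidirected inner edges of the hemisphere; this requires showing that hemisphere-inner vertices are also inner in $G$. You prove this carefully by splitting on whether $v$ is outer or inner in $G$ and using \cref{P1:outer-octahedron} and \cref{P2:4-orientation} to conclude that every $G$-outer neighbour of $v$ must lie on $O_v$. The paper implicitly relies on this same fact (its claim that ``each inner edge is outgoing at some inner vertex'' is only equivalent to ``no outer vertex has an outgoing inner edge'' once bidirected inner edges are excluded), so your extra care here is a genuine improvement in rigour rather than redundancy.
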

\begin{claimproof}
  Let $u$ be any inner vertex of a hemisphere of $SQ(v)$.  We shall first show
  that $u$ has at least two out-neighbors in that hemisphere.  We have
  $u \in N(v) - O_v$, i.e., $u$ is a neighbor of $v$ but not an out-neighbor.
  Hence, the edge $uv$ is directed from $u$ to $v$ and $v$ lies on the equator
  $O_u$ of $SQ(u)$.  As such, $v$ has two neighbors $w_1,w_2$ on the equator
  of $O_u$, i.e., $w_1,w_2$ are out-neighbors of $u$. Also $w_1,w_2 \in SQ(v)$
  since they are neighbors of $v$. Hence, $u$ has at least two out-neighbors in
  $SQ(v)$ and, by planarity of $SQ(v)$, these are in the same hemisphere as
  $u$, as desired.
  
  Finally, each hemisphere of $v$ is a plane quadrangulation with outer
  4-cycle $O_v$, and as such has exactly $2k$ inner edges for $k$ inner
  vertices.  As each inner vertex has at least two outgoing edges, the edge
  count gives that each inner vertex has exactly two outgoing edges.
  Moreover, each inner edge is outgoing at some inner vertex, i.e., no outer
  vertex of the hemisphere has an outgoing inner edge.
\end{claimproof}

Together with~$v$ each equator edge of~$SQ(v)$ induces a triangle in~$G$.
These four triangles are the \term{equator triangles} of~$v$.
 
\begin{myclaim}\label{claim:equator-triangle}
 Every triangle in~$G$ is an equator triangle.
\end{myclaim}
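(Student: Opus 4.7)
Let me define a vertex $v' \in \{u,v,w\}$ of a triangle $uvw$ to be an \emph{apex} if both triangle-edges incident to $v'$ are outgoing at $v'$. My plan is to show first that every apex of $uvw$ witnesses $uvw$ as one of its equator triangles, and then that every triangle has an apex.

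For the first part, let $v'$ be an apex with remaining triangle-vertices $u',w'$. By definition, $u',w' \in O_{v'}$. Since $u'w'$ is an edge of $G$ whose endpoints lie in $N(v')$, it is an edge of $SQ(v')$; and since $SQ(v')$ is a bipartite spherical quadrangulation in which each of the two diagonal pairs of the $4$-cycle $O_{v'}$ sits inside a single color class, $u'$ and $w'$ must be consecutive on $O_{v'}$. Hence $uvw$ is the equator triangle of $v'$ corresponding to the edge $u'w'$ of $O_{v'}$.

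For the second part, suppose first that some triangle-edge is bidirected; WLOG $uv$. Since $uv$ is outgoing at both $u$ and $v$, one of $u,v,w$ is an apex: if $uw$ is outgoing at $u$, then $u$ is an apex; if $vw$ is outgoing at $v$, then $v$ is an apex; otherwise both $uw$ and $vw$ are outgoing at $w$, making $w$ an apex. If instead no triangle-edge is bidirected, each of the three edges is outgoing at exactly one endpoint, so the numbers of outgoing triangle-edges at $u,v,w$ sum to $3$. If no vertex is an apex (i.e.\ has two), then each vertex has exactly one, and the triangle is cyclically oriented.

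The main obstacle is ruling out cyclically oriented triangles. Assume $u \to v \to w \to u$. From $v \to w$ we get $w \in O_v$; from the unidirectional $u \to v$ we get $u \in N(v) \setminus O_v$, so by \cref{P3:quadrangulation} the vertex $u$ is an inner vertex of exactly one hemisphere $H$ of $SQ(v)$. Then $uw$ is an edge of $H$, and because $u$ is an inner vertex of $H$, it is an inner edge of $H$. However, the edge $w \to u$ makes $uw$ outgoing at $w$, while $w \in O_v$ is an outer vertex of $H$; this contradicts \cref{claim:hemisphere}. Therefore no triangle is cyclically oriented, every triangle has an apex, and the claim follows.
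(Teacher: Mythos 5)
Your proof is correct and follows essentially the same route as the paper: identify a vertex of out-degree two in the triangle (your ``apex''), observe its two out-neighbors lie on its equator and hence span an equator edge (by bipartiteness of $SQ$), and rule out the cyclically oriented case via \cref{claim:hemisphere} applied to the hemisphere of $SQ(v)$ in which $u$ is an inner vertex. The only cosmetic difference is that you argue the cyclic case by contradiction while the paper directly forces the edge $vw$ to be oriented $w\to v$, and you handle the bidirected-edge case more explicitly.
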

\begin{claimproof}
  Let $\Delta$ be a triangle in $G$ with vertices $u,v,w$.  First, we shall
  show that~$\Delta$ is not oriented as a directed cycle, i.e., $\Delta$ has a
  vertex of out-degree two.  Without loss of generality let $uv$ be directed
  from $u$ to $v$.  If $uw$ is directed from $u$ to~$w$, we are done.  So
  assume that $uw$ is directed from $w$ to $u$.  Then $u \in N(v) - O_v$ is an
  inner vertex of a hemisphere of $SQ(v)$.  Moreover $w \in N(v)$ has an
  outgoing edge to $u$, and \cref{claim:hemisphere} implies that $w$ is an
  inner vertex of the same hemisphere of $SQ(v)$.  In particular, $vw$ is
  directed from $w$ to $v$ whence~$w$ is a vertex of out-degree two in $\Delta$.
  
 Now let $a$ be the vertex in $\Delta$ with out-degree two and $b,c$ be its out-neighbors in $\Delta$.
 Then~$b$ and $c$ lie on the equator of $a$ and are connected by an edge $bc$ in $G$.
 Thus $\Delta$ is an equator triangle.
\end{claimproof}

Clearly, a vertex~$w$ forms a triangle with two vertices~$u$ and~$v$ if and only if~$uv$ is an edge and~$w$ is a common neighbor of~$u$ and~$v$.
Equivalently,~$w$ is adjacent to~$v$ in~$SQ(u)$, which in turn is equivalent to~$w$ being adjacent to~$u$ in~$SQ(v)$.
Hence, the set~$N(u) \cap N(v)$ of all common neighbors (and thus also the set of all triangles sharing edge~$uv$) is endowed with the clockwise cyclic ordering around~$v$ in~$SQ(u)$, as well as with the clockwise cyclic ordering around~$u$ in~$SQ(v)$.
By \cref{P4:common-neighbors}, these two cyclic orderings are reversals of each other.
 
Let us define for a triangle~$\Delta$ in~$G$ with vertices~$u,v,w$ the two \term{sides} of~$\Delta$ as the two cyclic permutations of~$u,v,w$, which we denote by~\term{$[u,v,w]$} and~$[u,w,v]$.
So triangle~$\Delta$ has the two sides~$[u,v,w] = [v,w,u] = [w,u,v]$ and~$[u,w,v] = [w,v,u] = [v,u,w]$.
We define a binary relation~$\thicksim$ on the set of all sides of triangles in~$G$ as follows.
 \begin{equation}
  [u,v,a] \thicksim [v,u,b] \text{ if } 
   \begin{cases}
    \text{$a$ comes immediately before~$b$}\\
    \text{in the clockwise cyclic ordering}\\
    \text{of~$N(u) \cap N(v)$ around~$v$ in~$SQ(u)$}
   \end{cases}
   \label{eq:relation}
 \end{equation}
Note that by \ref{P4:common-neighbors}~$a$ comes immediately before~$b$ in the clockwise ordering around~$v$ if and only if~$b$ comes immediately before~$a$ in the clockwise ordering around~$u$.
Thus~$[u,v,a] \thicksim [v,u,b]$ also implies~$[v,u,b] \thicksim [u,v,a]$, i.e.,~$\thicksim$ is a symmetric relation and as such encodes an undirected graph~$H$ on the sides of triangles.

\begin{myclaim}\label{claim:cells}
 Each connected component of~$H$ is a cube.
 The corresponding subgraph in~$G$ is an octahedron.
\end{myclaim}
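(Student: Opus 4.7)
The plan is to first show that $H$ is $3$-regular, then that its $4$-cycles close up into the six faces of a cube, and finally that the six rectangles involved in a component induce an octahedron in $G$. For $3$-regularity, I note that each side $[p,q,r]$ has three cyclic representations, and for each representation the relation $\thicksim$ pairs it with a unique neighbor; concretely, the three $H$-neighbors of $[p,q,r]$ are $[q,p,m_{pq}(r)]$, $[r,q,m_{qr}(p)]$ and $[p,r,m_{rp}(q)]$, where $m_{pq}(r)$ denotes the immediate clockwise successor of $r$ in the cyclic order of $C_{pq}$ around $q$ in $SQ(p)$.

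The crux, which I expect to be the main obstacle, is a \emph{mate-closure} identity. Fixing $s_0 = [u,v,a]$ and setting $a' := m_{uv}(a)$, $u' := m_{va}(u)$, $v' := m_{au}(v)$ (so that the three $H$-neighbors of $s_0$ are $[v,u,a']$, $[a,v,u']$, $[u,a,v']$), I would show that each of the three two-step walks out of $s_0$ closes into a $4$-cycle, for instance
\[ s_0 = [u,v,a] \to [v,u,a'] \to [a',u,v'] \to [u,a,v'] \to s_0. \]
By \ref{P3:quadrangulation} $SQ(u)$ is a spherical quadrangulation, and since $a,a'$ are consecutive in the cyclic order around $v$ in $SQ(u)$ (by the very definition of $a' = m_{uv}(a)$), there is a $4$-face $F_u$ of $SQ(u)$ bounded by $v, a, X, a'$ for some $X \in C_{ua}\cap C_{ua'}$. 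Using \ref{P4:common-neighbors} at the edges $uv$ and $ua$ to transport cyclic orders between $SQ(u)$, $SQ(v)$, and $SQ(a)$, one identifies $X = v' = m_{au}(v)$, which closes the displayed $4$-cycle; the symmetric analyses through $v$ and through $a$ yield the two remaining $4$-cycles at $s_0$. Iterating these three $4$-cycles in a BFS from $s_0$ uncovers exactly eight sides, namely all $[x,y,z]$ with $x\in\{u,u'\}$, $y\in\{v,v'\}$, $z\in\{a,a'\}$ (with cyclic orientation determined by the parity of mate substitutions); together with their twelve $H$-edges they realize the $3$-cube $Q_3$, in which the three antipodal pairs $(u,u')$, $(v,v')$, $(a,a')$ label the three edge directions.

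For the octahedron claim, the eight triangles $\{x,y,z\}$ coming from the cube-vertices furnish all twelve octahedral edges between distinct antipodal pairs in $\{u,v,a,u',v',a'\}$. To rule out the three diagonals $uu'$, $vv'$, $aa'$ as edges of $G$, suppose for contradiction that $uu'\in E(G)$; then $u'\in SQ(u)$, and the eight triangles already established show that $u'$ is adjacent in $SQ(u)$ to each of the four consecutive vertices $v, a, v', a'$ of the face $F_u$ above. But consecutive vertices on the boundary of $F_u$ are themselves adjacent in $SQ(u)$, so we would obtain a triangle in the bipartite spherical quadrangulation $SQ(u)$, a contradiction. Symmetric arguments for $vv'$ and $aa'$ complete the proof that $\{u,u',v,v',a,a'\}$ induces the octahedron.
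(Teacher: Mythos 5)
Your proposal takes essentially the same route as the paper's proof: $3$-regularity of $H$ via the three cyclic representations of a side, the $4$-cycle closure obtained by identifying the fourth vertex of the quadrangular face $F_u$ of $SQ(u)$ (using \ref{P4:common-neighbors} to transport cyclic orders), and then the cube/octahedron structure with the three antipodal pairs excluded from $E(G)$ via bipartiteness of the spherical quadrangulations. Your $u', v', a'$ correspond to the paper's $c, b, a$ (and your $a$ to the paper's $w$); the paper excludes the diagonals more directly (e.g.\ $u$ and $u'$ are consecutive neighbors of $a$ in the bipartite $SQ(v)$, hence non-adjacent) rather than by your contradiction through $F_u$, and replaces your ``BFS closure'' sentence with an explicit symmetry argument pinning down the eighth vertex $[a,b,c]$, but these are expository rather than substantive differences.
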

\begin{claimproof}
  Consider any fixed vertex $[u,v,w]$ of $H$.  Then $vw$ is an edge of $G$
  contained in $SQ(u)$.  As $SQ(u)$ is a quadrangulation, vertex $v$ has
  degree at least two in $SQ(u)$.  Hence, there exists a unique vertex $a$ in
  $SQ(u)$ such that $[u,v,w] \thicksim [v,u,a]$ according to
  \cref{eq:relation}.  Moreover, $a$ and $w$ are both neighbors of $v$ in
  $SQ(u)$ and hence non-adjacent in $G$, i.e.,
  $a \in (N(u) \cap N(v)) - N(w)$.  Symmetrically, we find
  $b \in (N(w) \cap N(u)) - N(v)$ with $[w,u,v] \thicksim [u,w,b]$ and
  $c \in (N(v) \cap N(w)) - N(u)$ with $[v,w,u] \thicksim [w,v,c]$.  It
  follows that $a,b,c$ are pairwise distinct vertices of $G$ and thus
  $[u,v,w]$ has degree exactly three in $H$.
  
  Now recall that $vw$ is an edge of $G$ contained in $SQ(u)$.  Consider the
  face $f$ in $SQ(u)$ for which $v$ comes immediately before $w$ in the
  clockwise ordering around $f$.  Let $v,w,s,t$ be the clockwise ordering of
  vertices around $f$.  Then, for example, $w$ comes immediately before $t$ in
  the clockwise cyclic ordering around $v$ in $SQ(u)$.  Using
  \cref{eq:relation}, we have the following.
 \[
  [u,v,w] \thicksim [v,u,t] = [u,t,v] \thicksim [t,u,s] = [u,s,t] \thicksim [s,u,w] = [u,w,s] \thicksim [w,u,v] = [u,v,w]
 \]
%
%
  It follows that $t=a$ and $s=b$.
  I.e., the component of $H$ with vertex $[u,v,w]$ contains the four triangle sides $[u,v,w]$, $[u,w,b]$, $[u,b,a]$, $[u,a,v]$ and these form a 4-cycle in $H$.
  As $v,w,a,b$ are pairwise distinct vertices in $G$, we have $[u,v,w] \not\thicksim [u,b,a]$ and $[u,w,b] \not\thicksim [u,a,v]$, meaning that the above 4-cycle in $H$ is induced.
  
  \begin{figure}
   \centering
   \includegraphics{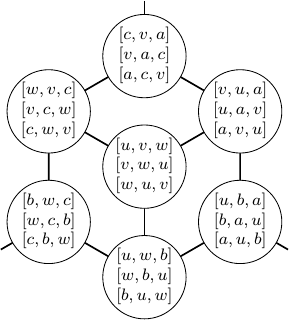}
   \caption{The second neighborhood of vertex $[u,v,w]$ in the auxiliary graph $H$.}
   \label{fig:cube-component}
  \end{figure}
  
  Repeating the same argument for $[v,w,u]$ in quadrangulation $SQ(v)$ and $[w,u,v]$ in quadrangulation $SQ(w)$, we get the induced 4-cycles
   \[
    [v,w,u] \thicksim [w,v,c] = [v,c,w] \thicksim [c,v,a] = [v,a,c] \thicksim [a,v,u] = [v,u,a] \thicksim [u,v,w] = [v,w,u]
   \] 
%
%
  and
%
%
  \begin{multline*}
   [w,u,v] \thicksim [u,w,b] = [w,b,u] \thicksim [b,w,c]\\ = [w,c,b] \thicksim [c,w,v] = [w,v,c] \thicksim [v,w,u] = [w,u,v].
  \end{multline*}
  As $[u,v,w] = [v,w,u] = [w,u,v]$, these three induced 4-cycles in $H$ pairwise share exactly one edge, as shown in \cref{fig:cube-component}.
  For each of the three vertices $[a,c,v]$, $[b,a,u]$, and $[c,b,w]$, the third neighbor in $H$ is yet to be explored.
  However by symmetry, each of those vertices is also in exactly three induced 4-cycles, which implies that they have the same third neighbor: vertex $[a,b,c] = [b,c,a] = [c,a,b]$.
  
  Thus, the component of $H$ containing $[u,v,w]$ is a cube.
  The eight corresponding triangles in $G$ form an octahedron with vertex set $\{u,v,w,a,b,c\}$.
 \end{claimproof}

 With \cref{claim:cells} we have identified a family of octahedra in~$G$ such
 that each side of each triangle in~$G$ is contained in exactly one
 octahedron.  We call these octahedra the \term{cells} of~$G$, as these
 correspond in the 2-dimensional case to the 4-cycles bounding faces of
 the quadrangulation.  As \cref{eq:relation} puts two triangle sides~$[u,v,a]$
 and~$[v,u,b]$ into a common cell if and only if the edges~$va$ and~$vb$ bound
 the same facial 4-cycle in~$SQ(u)$, we have the following correspondence
 between the cells of~$G$ and the faces in the spherical quadrangulations.
 
\begin{myclaim}\label{claim:face-in-cell}
  If~$O \subseteq G$ is a cell of~$G$ and~$C$ is an induced 4-cycle in~$O$,
  then~$C$ bounds a face of~$SQ(v)$ and a face of~$SQ(u)$ for the two
  vertices~$u,v \in O - C$.  Conversely, if~$C$ is a 4-cycle bounding a face
  of~$SQ(v)$, then there is a cell~$O$ of~$G$ containing~$\{v\} \cup V(C)$.
\end{myclaim}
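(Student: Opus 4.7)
The plan is to leverage the cube structure of the components of $H$ from \cref{claim:cells}. For each cube component, i.e., each cell $O$ of $G$, and each of the six vertices $v \in O$, the four triangles of $O$ that contain $v$ contribute (one side each) four triangle sides to $H$, and these four sides form one face of the cube --- the \emph{link} of $v$. The six links correspond bijectively to the six faces of the cube (equivalently, to the six vertices of the dual octahedron $O$). Moreover, the four triangles indexed by the link of $v$ use exactly the four edges of the induced $4$-cycle $C_v$ of $O$ that omits $v$ and its unique non-neighbor $v^*$ in $O$.

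For the forward direction, fix a cell $O$ and an induced $4$-cycle $C$ of $O$, so $V(O) \setminus V(C) = \{v, v^*\}$ and $C = C_v = C_{v^*}$. I claim that the link of $v$ in $H$ certifies that $C$ bounds a face of $SQ(v)$. Indeed, two consecutive sides in the link of $v$ are $\thicksim$-related, and unwinding the definition of $\thicksim$ together with the reversal of cyclic orders from \cref{P4:common-neighbors} shows that the two corresponding edges of $C$ share an endpoint $x$ and bound a common face of $SQ(v)$ at $x$. Iterating around all four sides of the link, all four edges of $C$ lie on a single face of $SQ(v)$, which must then be $C$ itself. By the symmetric role of $v^*$ in $O$, the cycle $C$ also bounds a face of $SQ(v^*)$.

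For the converse, let $C = x_1 x_2 x_3 x_4$ bound a face of $SQ(v)$, and consider the triangle side $[v, x_1, x_2]$. By \cref{claim:cells} it lies in a unique cube component, hence in a unique cell $O$, and I claim that the link of $v$ in this component is $\{[v, x_i, x_{i+1}] : i \in \{1,2,3,4\}\}$ with indices mod~$4$, which will establish $\{v\} \cup V(C) \subseteq V(O)$. To verify this, apply $\thicksim$ to $[v, x_1, x_2]$: the relation yields a side $[x_1, v, y]$ with $y$ immediately after $x_2$ in the clockwise order around $v$ in $SQ(x_1)$; by \cref{P4:common-neighbors}, this $y$ is the vertex immediately before $x_2$ in the clockwise order around $x_1$ in $SQ(v)$; and since $C$ is a face of $SQ(v)$ meeting $x_1$ along edges $x_1 x_2$ and $x_1 x_4$, this forces $y = x_4$. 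Rewriting $[x_1, v, x_4] = [v, x_4, x_1]$ and iterating three more times retrieves the full link $[v, x_1, x_2]$, $[v, x_4, x_1]$, $[v, x_3, x_4]$, $[v, x_2, x_3]$ and closes up. The only real subtlety throughout is the careful translation of cyclic orders via \cref{P4:common-neighbors}; with a consistent choice of orientation the remaining steps are bookkeeping that follows exactly the pattern already used in the proof of \cref{claim:cells}.
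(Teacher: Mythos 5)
The paper offers no explicit proof of this claim; it is stated as an immediate consequence of the proof of \cref{claim:cells} together with the observation (made just before the claim) that $\thicksim$ relates $[u,v,a]$ to $[v,u,b]$ precisely when $va$ and $vb$ bound a common face of $SQ(u)$. Your write-up fills in this implicit reasoning and follows the same route: the link of $v$ in its cube component is the $4$-cycle of sides $[v,x_i,x_{i+1}]$ derived from a single face of $SQ(v)$ via the chain in \cref{claim:cells}, and tracing $\thicksim$ starting from $[v,x_1,x_2]$ for the converse is exactly the same bookkeeping.

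Two points of caution, neither fatal. In the converse, direct substitution into \cref{eq:relation} for $[v,x_1,x_2]\thicksim[x_1,v,y]$ gives that $x_2$ is immediately before $y$ clockwise around $x_1$ in $SQ(v)$ --- you wrote ``around $v$ in $SQ(x_1)$'' --- and after applying \cref{P4:common-neighbors} the conclusion should read ``after'' where you have ``before''. This swap amounts to choosing the other traversal direction of $C$ (equivalently, starting from the side $[v,x_2,x_1]$ rather than $[v,x_1,x_2]$); you correctly anticipate this with your remark about ``a consistent choice of orientation'', but it is worth being explicit that $[v,x_1,x_2]$ and $[v,x_2,x_1]$ land in the two different cells corresponding to the two faces of $SQ(v)$ containing the edge $x_1x_2$. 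In the forward direction, ``iterating around all four sides ... all four edges of $C$ lie on a single face'' does not follow purely locally from ``each consecutive pair bounds some common face'' --- one must also know the four faces coincide. The cleanest justification is that the $H$-chain constructed in the proof of \cref{claim:cells} associates all four sides of the link of $u$ to one and the same face $f=\{v,w,s,t\}$ of $SQ(u)$, so the four-edges-on-one-face statement should be read off from there rather than re-derived by iteration.
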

 
Having identified the cells, we can now construct a generic boxed Plattenbau
for~$G$ by identifying two opposite vertices in a particular cell, calling
induction, and then splitting the rectangle corresponding to the
identification vertex into two.  The cells of~$G$ will then correspond to the
rooms in~$\R$, except that one cell of~$G$ will correspond to the unbounded
region of~$\R$ (which is not a room).  To this end, we prove the following
stronger statement:

\begin{lemma}\label{lem:induction}
 Let~$G$ be a graph satisfying \crefrange{P1:outer-octahedron}{P4:common-neighbors} and let~$A,B,C$ be three outer vertices forming a triangle in~$G$.
 Then there exists a generic boxed Plattenbau~$\R$ whose touching graph is~$G$ such that each of the following holds.
 \begin{enumerate}[leftmargin=2.1em,label=(I\arabic*)]
  \item The six outer vertices of~$G$ correspond to the outer rectangles of~$\R$.\label{item:induction-1}
  \item The cells of~$G$ correspond to the rooms of~$\R$, except for one cell that is formed by all six outer vertices.\label{item:induction-2}
  \item For any two vertices~$u,v$ with corresponding rectangles~$R_u,R_v$ we have~$u \to v$ in the orientation of~$G$ if and only if~$R_u \cap R_v$ contains an edge of~$R_u$.\label{item:induction-3}
  \item For each vertex~$v$ corresponding to a rectangle~$R_v$, the rectangles touching~$R_v$ come in the same spherical order as their corresponding vertices in~$SQ(v)$.\label{item:induction-4}
 \end{enumerate}
\end{lemma}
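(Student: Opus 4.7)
I would proceed by induction on the number of cells of $G$. The base case is when $G$ has a single cell, which must then be the outer octahedron on $A,B,C$ and the other three outer vertices; we realize $G$ as the boundary of an axis-aligned box, and (I1)--(I4) are immediate.

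For the inductive step, the strategy is to \emph{collapse one room} of the eventual Plattenbau at the level of the graph. Concretely, I would first show that if $G$ is not just the outer octahedron, then there exists a cell $O^{*}$ of $G$ different from the outer cell together with an opposite pair $\{u,a\}\subseteq V(O^{*})$ consisting of two inner vertices. The candidate is an ``innermost'' cell (for instance a cell whose removal peels off a vertex not on the outer octahedron), and the octahedral structure of cells guarantees that among the three opposite pairs at least one can be chosen inner--inner. Let $G'$ be the graph obtained from $G$ by identifying $u$ and $a$ into a new vertex $u^{*}$; since $u$ and $a$ are non-adjacent in $G$ (opposite in $O^{*}$), $G'$ is a simple graph with one fewer vertex.

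The combinatorial heart of the step is to verify that $G'$ again satisfies \crefrange{P1:outer-octahedron}{P4:common-neighbors} with the same six outer vertices and the same triangle $A,B,C$. Properties \ref{P1:outer-octahedron} and \ref{P2:4-orientation} are local: the orientation of $G'$ is inherited from $G$, with $u^{*}$ receiving the four out-edges split between $u$ and $a$ (two from each, across the equator $\{v,w,b,c\}$ of the collapsed cell). For \ref{P3:quadrangulation}, I would use \cref{claim:face-in-cell} to see that the $4$-cycle on the common neighbors $\{v,w,b,c\}$ of $\{u,a\}$ bounds a face in both $SQ(u)$ and $SQ(a)$; deleting these two faces and gluing the two spheres along the common $4$-cycle yields a spherical quadrangulation which is the desired $SQ(u^{*})$, and its equator is the union of the two pre-existing equators minus $\{v,w,b,c\}$, so it is again a $4$-cycle. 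All other spherical quadrangulations $SQ(x)$ are essentially unchanged: locally a length-two path through $u$ or $a$ on the face corresponding to $O^{*}$ is replaced by a single vertex $u^{*}$, preserving the quadrangulation structure. Property \ref{P4:common-neighbors} for edges now incident to $u^{*}$ follows by combining the reversal property of $u$-side and $a$-side around each neighbor with the gluing just described.

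Once $G'$ is shown to satisfy the hypotheses, induction yields a proper boxed Plattenbau $\R'$ for $G'$ fulfilling \ref{item:induction-1}--\ref{item:induction-4}. To recover $\R$, I perform the geometric inverse of the collapse: split the rectangle $R_{u^{*}}\in\R'$ into two parallel, co-oriented rectangles $R_{u}$ and $R_{a}$ a small distance $\varepsilon$ apart, extend the four equator rectangles $R_{v},R_{w},R_{b},R_{c}$ slightly through the newly created slab so that they continue to touch both $R_{u}$ and $R_{a}$, and leave all other rectangles unchanged. Property \ref{item:induction-4} of the induction hypothesis applied to $u^{*}$ ensures that $R_{v},R_{w},R_{b},R_{c}$ appear consecutively around $R_{u^{*}}$ and separate its touches into the two hemispheres that will belong to $R_{u}$ and $R_{a}$; choosing $\varepsilon$ small enough guarantees no new touches or intersections are introduced, and the new bounded region between $R_{u}$ and $R_{a}$ is precisely a box whose six walls correspond to $O^{*}$, so \ref{item:induction-2} continues to hold with this extra room. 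Properties \ref{item:induction-1}, \ref{item:induction-3}, and \ref{item:induction-4} for the split Plattenbau follow directly from their validity for $\R'$.

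The main obstacle is the combinatorial step: proving the existence of an inner--inner opposite pair in a cell whose contraction preserves \ref{P3:quadrangulation} and \ref{P4:common-neighbors}. In particular, the gluing argument for $SQ(u^{*})$ needs the two cyclic orderings of $\{v,w,b,c\}$ around $u$ and $a$ to be reverses of each other, which is exactly \ref{P4:common-neighbors} applied along the four edges $uv,uw,ub,uc$ (equivalently $av,aw,ab,ac$); this is what makes the octahedral cell structure the right inductive object. The geometric splitting is essentially a local move once the combinatorial step is secured.
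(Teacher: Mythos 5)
Your high-level strategy—induct by contracting an opposite pair of a cell, call induction, then geometrically split a rectangle—matches the paper's, and your base case and the local geometric splitting step are essentially the same as the paper's. However, your choice of \emph{which} pair to contract is different from the paper's and this is where the argument breaks down.

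\textbf{Gap 1: the inner--inner opposite pair need not exist.} You claim that if $G$ is larger than the outer octahedron, some cell $O^*$ contains an opposite pair $\{u,a\}$ of inner vertices. This is false. Consider a box subdivided by a single interior rectangle parallel to one pair of faces: the touching graph has seven vertices, six outer and one inner. Every cell containing the unique inner vertex must pair it opposite an outer vertex. More generally, whenever there is exactly one inner vertex, no inner--inner opposite pair exists in any cell. The paper avoids this by never requiring an inner--inner pair: it contracts an \emph{inner} vertex $c$ with an \emph{outer} vertex $C$, where $\{c,C\}$ are opposite in a cell that contains the triangle $A,B,C$ of outer vertices.

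\textbf{Gap 2: the reorientation and equator of $SQ(u^*)$.} You assert that $u$ and $a$ each send two out-edges into the equator $\{v,w,b,c\}$ of the cell, and that after gluing, the equator of $SQ(u^*)$ is ``the union of the two pre-existing equators minus $\{v,w,b,c\}$,'' again a $4$-cycle. Neither claim follows from the hypotheses. By \cref{claim:face-in-cell}, $\{v,w,b,c\}$ bounds a \emph{face} of $SQ(u)$ and of $SQ(a)$, but it need not be the \emph{equator} of either; it could lie entirely inside one hemisphere, in which case $u$ might have zero out-edges into $\{v,w,b,c\}$, and the ``union of equators minus $\{v,w,b,c\}$'' is then an $8$-cycle (or disconnected), not a $4$-cycle—violating \ref{P2:4-orientation} for $u^*$. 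The paper's choice is tuned precisely to avoid this: $c$ is the vertex of the triangle $abc$ with out-degree two (via \cref{claim:equator-triangle}), and since its other two cell-neighbors $A,B$ are outer, $c\to A$ and $c\to B$ are forced (an inner vertex cannot lie on the equator of an outer vertex). Hence $c$'s equator \emph{is} the face $\{A,B,a,b\}$, while $\tilde C$ simply inherits $C$'s equator of outer vertices—the contracted vertex gets a well-defined orientation, and the pasted quadrangulation $SQ(\tilde C)$ has the correct equator for free. Your version has no analogous mechanism, and verifying \ref{P2:4-orientation}--\ref{P4:common-neighbors} for $G'$ would require constraints on the orientation that you have not established.

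In short: the inductive frame and the geometric un-contraction are right, but the combinatorial heart of the step—which opposite pair to collapse and why properties \crefrange{P1:outer-octahedron}{P4:common-neighbors} survive—needs the paper's inner/outer asymmetric choice; with two inner vertices the needed equator coincidence does not hold, and in some instances the pair you want does not even exist.
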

\begin{proof}
  We proceed by induction on the number of vertices in $G = (V,E)$.
  As the base case we have $|V| = 6$ and $G \cong K_{2,2,2}$ is just an octahedron.
  In this case $G$ has exactly eight triangles and 16 sides of triangles.
  There are exactly two cells, each isomorphic to $G$.
  The desired Plattenbau $\R$ is given by the six sides of an axis-aligned cuboid in $\RR^3$.
  It is easy to see that $\R$ has the required properties.
  
  So let us assume that $|V| > 6$, i.e., there is at least one inner vertex.
  Consider the three outer vertices $A,B,C$, which form a triangle in $G$.
  We shall first show that at least one of the quadrangulations $SQ(A)$, $SQ(B)$, $SQ(C)$ has an inner vertex.
  As $G$ is connected by \cref{P1:outer-octahedron}, there is an edge in $G$ from some inner vertex $v$ to some outer vertex $w$ with $w$ not necessarily in $\{A,B,C\}$.
  Then $v$ is an inner vertex of $SQ(w)$, i.e., one hemisphere of $SQ(w)$ is a plane quadrangulation with at least five vertices whose outer 4-cycle consists of four outer vertices of $G$.
  One edge of the outer 4-cycle has both endpoints in $\{A,B,C\}$ and, as there is some inner vertex, at least one of these two endpoints has an inner vertex as a neighbor.
  
  So we may assume that $A$ has an inner neighbor, i.e., the non-trivial hemisphere $Q$ of $SQ(A)$ has an inner face $f$ that is bounded by the edge $BC$ and at least one inner vertex of $G$.
  Consider the cell $O$ of $G$ that contains all vertices on $f$, as given by \cref{claim:face-in-cell}.
  Let the vertices of this octahedron $O$ be denoted by $A,B,C,a,b,c$ with the three pairs of non-adjacent vertices being $\{A,a\}$, $\{B,b\}$, and $\{C,c\}$.
  Note that at least one of $a,b,c$ is in inner vertex of $G$ since $O$ includes an inner vertex of $Q$.
  In any case, vertices $a,b,c$ form a triangle in $G$ and each of the three vertices has outgoing edges to two of $A,B,C$.
  By \cref{claim:equator-triangle}, one of $a,b,c$, say $c$, has outgoing edges to the other two vertices ($a$ and $b$ in this case).
  In particular, $c$ is an inner vertex, as otherwise $c$ has only outer vertices as out-neighbors (by \cref{P2:4-orientation}), contradicting that one of $a,b,c$ is an inner vertex.
  
  Now we identify vertices $c$ and $C$ in $G$, denoting the resulting vertex by $\tilde{C}$ and the resulting graph by $\tilde{G}$.
  Each of $A,B,a,b$ is a neighbor of $c$ and $C$ in $G$.
  We remove double edges during the identification, so that in $\tilde{G}$ vertex $\tilde{C}$ is connected to each of $A,B,a,b$ with a single edge.
  In $\tilde{G}$ we choose the same six vertices except for $\tilde{C}$ replacing $C$ as the outer vertices.
  We claim that $\tilde{G}$ together with this choice of outer vertices has the properties in \crefrange{P1:outer-octahedron}{P4:common-neighbors}.
  Indeed \cref{P1:outer-octahedron} holds because each neighbor $w$ of $\tilde{C}$ in $\tilde{G}$ that is not a neighbor of $C$ in $G$ has in $G$ an incoming edge to $c$.
  In particular, such $w$ is an inner vertex, as $c$ is an inner vertex.
  Hence the outer vertices in $\tilde{G}$ induce an octahedron.
  Connectivity of $\tilde{G}$ follows from connectivity of $G$.
  
  For \cref{P2:4-orientation} we first prove the following.
  
  \begin{myclaim}\label{claim:common-neighbors-of-C}
   We have $N(c) \cap N(C) = \{A,B,a,b\}$.
  \end{myclaim}
  \begin{claimproof}
    Suppose for the sake of contradiction that $v$ is a common neighbor of $c$
    and~$C$ different from $A,B,a,b$.  Then~$C$ and~$c$ are out-neighbors of
    $v$, i.e., $v$ is a non-equator vertex in~$SQ(c)$ and $SQ(C)$.  Let $w$ be
    an out-neighbor of $v$ different from $c,C$.  Then, by
    \cref{claim:hemisphere}~$w$ is contained in $SQ(c)$ and $SQ(C)$, too.  If
    $w$ is neither on the equator of $SQ(c)$ nor on the equator of $SQ(C)$, we
    can repeat the argument with $w$ taking the role of $v$.  Thus assume
    that~$w$ is an out-neighbor of at least one of $c,C$.  As out-neighbors of
    the outer vertex $C$ are outer vertices, it follows that $w$ is an
    out-neighbor of $c$, i.e., $w \in \{A,B,a,b\}$.  By symmetry, assume that
    $w \in \{B,b\}$.  Now consider the hemisphere $Q$ of $SQ(w)$ that contains
    $v$ as an inner vertex.  As $A,B,C,a,b,c$ form a cell, the vertices
    $A,C,a,c$ form a quadrangular face in $Q$ by \cref{claim:face-in-cell}.
    Moreover, we know that $c$ has outgoing edges to $A$ and $a$, while $v$
    has outgoing edges to $c$ and~$C$, see \cref{fig:claim-neighbors-of-C} for
    an illustration of the situation on $SQ(w)$.
     
   \begin{figure}
    \centering
    \includegraphics{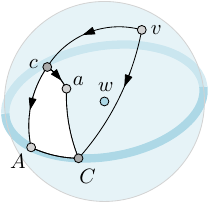}
    \caption{The situation in \cref{claim:common-neighbors-of-C} that leads to a contradiction.}
    \label{fig:claim-neighbors-of-C}
   \end{figure}

   Vertices $a,A,c,C$ and $v$ induce a $K_{2,3}$ in $SQ(w)$.  Since $a,A,c,C$
   bound a face of the hemisphere $Q$ of $SQ(w)$, and $A,C$ are outer vertices
   of $G$ (hence outer vertices of $Q$), it follows that vertex $a$ lies
   inside the 4-cycle $K$ formed by $A,c,v,C$ in $Q$.  As $Q$ is a
   quadrangulation, the 4-cycle $K$ together with the vertices in its
   interior is as well a quadrangulation $J$ with outer face $K$.  Hence,
   $J$ has  $|V(J)|-4$ inner vertices and $2|V(J)|-8$ inner edges.  One
   of the inner edges of $J$, namely the edge $ca$ is outgoing at the outer
   vertex $c$ of $J$.  Hence at most $2|V(J)|-9$ inner edges of $J$ are
   outgoing at an inner vertex of $J$.  But this is a contradiction to
   \cref{claim:hemisphere}, which states that each of the $|V(J)|-4$ inner
   vertices has exactly two outgoing edges in $Q$.
  \end{claimproof}

  \cref{claim:common-neighbors-of-C} implies that the orientation of $G$ given by \cref{P2:4-orientation} naturally induces an orientation of $\tilde{G}$ satisfying again \cref{P2:4-orientation}.

  \begin{myclaim}\label{claim:P3-holds}
   \cref{P3:quadrangulation} holds for $\tilde{G}$.
  \end{myclaim}
  \begin{claimproof}
    We shall show that for each vertex $v$ of $\tilde{G}$, the neighborhood of
    $v$ induces a spherical quadrangulation $SQ(v)$ in which the out-neighbors
    of $v$ form a 4-cycle.  We distinguish different cases of $v$ and how
    $N(v)$ changes during the identification of $c$ and $C$.  For
    $v = \tilde{C}$, $N(v)$ in $\tilde{G}$ is the union of $N(c)$ and $N(C)$
    in $G$.  As $A,B,C,a,b,c$ form a cell, $A,B,a,b$ form a face in both
    $SQ(c)$ and $SQ(C)$ by \cref{claim:face-in-cell}.  Moreover, $A,B,a,b$ are
    the equator in $SQ(c)$.  Hence, the subgraph of $\tilde{G}$ induced by
    $N(v) = N(c) \cup N(C)$ can be obtained by pasting $SQ(c)$ into the face
    $A,B,a,b$ of $SQ(C)$.  This is a quadrangulation and the out-neighbors of
    $\tilde{C}$ are the same as for $C$, i.e., induce a 4-cycle.  For
    $v \in \{A,B,a,b\}$, identifying $C$ and $c$ corresponds to merging
    opposite vertices of a face $f$ in $SQ(v)$.  As
    $N(c) \cap N(C) = \{A,B,a,b\}$ by \cref{claim:common-neighbors-of-C}, in
    $SQ(v)$ vertices $c$ and $C$ have no common neighbor outside of $f$.  Thus
    the identification of~$C$ and~$c$ in $SQ(v)$ preserves the property of
    being a quadrangulation and does not affect the equator.  For
    $v \notin \{\tilde{C},A,B,a,b\}$ the neighborhood of $v$ does not change,
    except for possibly renaming $c$ or $C$ to $\tilde{C}$.  This shows that
    $\tilde{G}$ satisfies \cref{P3:quadrangulation}.
  \end{claimproof}

  \begin{myclaim}\label{claim:P4-holds}
   \cref{P4:common-neighbors} holds for $\tilde{G}$.
  \end{myclaim}
  \begin{claimproof}
   Recall that during the identification of $c$ and $C$, we changed the embedding of $SQ(v)$ only for $v \in \{\tilde{C},A,B,a,b\}$.
   Thus we need to check only those edges in $\tilde{G}$ with at least one endpoint in $\{\tilde{C},A,B,a,b\}$.
   First consider an edge $\tilde{C}v$ with $v \notin \{A,B,a,b\}$.
   If $v$ was a neighbor of $c$ in $G$, then in $SQ(\tilde{C})$ vertex $v$ is embedded inside the quadrangle $A,B,a,b$.
   If $v$ was a neighbor of $C$ in $G$, then in $SQ(\tilde{C})$ vertex $v$ is embedded outside the quadrangle $A,B,a,b$.
   By planarity of $SQ(\tilde{C})$, there is no edge between these two types of vertices.
   Thus the common neighborhood of $\tilde{C}$ and any $v \notin \{A,B,a,b\}$ coincides with either $N(c) \cap N(v)$ or $N(C) \cap N(v)$ in $G$.
   Say $v$ was a neighbor of $c$.
   As the clockwise cyclic ordering around $v$ in $SQ(\tilde{C})$ is the same as in $SQ(c)$, and the clockwise ordering around $\tilde{C}$ in $SQ(v)$ is the same as that around $c$ in $SQ(v)$ before, \cref{P4:common-neighbors} is satisfied here.
   The case of an edge $\tilde{C}v$ with $v \in \{A,B,a,b\}$ and the other cases are similar.
    \end{claimproof}

    Up to now we have shown that the graph $\tilde{G}$ obtained from $G$ by
    identifying $c$ and $C$ satisfies
    \crefrange{P1:outer-octahedron}{P4:common-neighbors}.  Thus, by induction
    there is a generic boxed Plattenbau $\R'$ whose touching graph is
    $\tilde{G}$ such that \crefrange{item:induction-1}{item:induction-4} hold.
    In particular the rectangles $R_A,R_B,R_a,R_b$ for the 4-cycle $A,B,a,b$
    in $SQ(\tilde{C})$ enclose a rectangular region in one corner of the
    rectangle $R$ for~$\tilde{C}$ (possibly entire $R$).  We alter $\R$ by
    shortening all rectangles touching $R$ inside this region by the same
    small amount~$\varepsilon >0$ and introducing a new rectangle $R_c$ for
    $c$ parallel to $R$ at distance~$\varepsilon$, touching all shortened
    rectangles and the rectangles $R_A,R_B,R_a,R_b$.  Secondly, we let $R$ be
    the rectangle for $C$.  Then the resulting Plattenbau $\R$ represents $G$
    as its touching graph, and \crefrange{item:induction-1}{item:induction-4}
    hold for $\R$.
 \end{proof}

\cref{lem:induction} shows the sufficiency of \crefrange{P1:outer-octahedron}{P4:common-neighbors}.
The necessity is given in \cref{prop:boxed-necessity}.
Together this proves \cref{thm:proper-boxed}.

\subsection{Iterative Constructions for generic boxed Plattenbauten}\label{subsec:iterative}

We have given a characterization of graphs of generic boxed Plattenbauten as a generalization of plane quadrangulation. In this section, we show, how these Plattenbauten can be constructed iteratively by inserting Plattenbauten into each other starting from the trivial one.

The following propositions give two different descriptions on the iterative structure of generic boxed Plattenbauten.

\begin{proposition}\label{prop:corner-reduction}
  If $\R$ is a generic boxed Plattenbau with at least two rooms and
  $Z$ is a room containing an outer corner, then $Z$ has a side $A_0$, that  is a rectangle of $\R$.
\end{proposition}

\begin{proof}
  Let $o$ be an outer corner and $Z$ be the room of $\R$ which contains $o$.
  Let $\bar{o}$ be the corner of $Z$ which is opposite to $o$.  Let
  $A_0,A_1,A_2$ be the three rectangles of $\R$ which form the sides of $Z$
  containing $\bar{o}$.  Let $A_0$ be the one which has $\bar{o}$ as a corner,
  $A_1$ be the one which has $\bar{o}$ on a boundary edge, and $A_2$ be the
  one which has $\bar{o}$ as an interior point.  This local structure
  at~$\bar{o}$ shows that $A_0\to A_1$, $A_0\to A_2$, and $A_1\to A_2$.  The
  first two of these imply that $A_0$ is a rectangle of $\R$.
\end{proof}

\cref{prop:corner-reduction} shows that a generic boxed Plattenbau $\R$ with
outer corner $o$ can be reduced to a trivial Plattenbau with only one room by
repeating the following step: Identify the rectangle~$A_0$, remove it and
extend all the rectangles $B$ with $B\to A_0$ such that the edge of~$B$ which
made a contact with $A_0$ makes a contact with the outer rectangle which
contains~$o$ and is parallel to $A_0$.  This reduction also follows from
the inductive proof of \cref{thm:proper-boxed}.  A consequence that can be
drawn from the reduction is that the number of combinatorially different
generic boxed Plattenbauten with $n$ rectangles is only exponential in $n$.  A
rough estimate gives an upper bound of $24^n$.

The next proposition gives a decomposition of generic boxed Plattenbauten by
means of ``stacking'' Plattenbauten into Plattenbauten.

\begin{proposition}
  Let $\R$ be a generic boxed Plattenbau with inner rectangles in all
  three directions and $r$ rooms. Then there are generic boxed
  Plattenbauten $\R_O$ and $\R_I$ with $r_O >1$ and $r_I> 1$ rooms
  respectively and a room $Z$ of $\R_O$, such that
  $\R$ can be obtained by inserting the inner rectangles of $\R_I$ in
  the room $Z$ of $\R_O$, i.e., $r = r_o+r_I-1$.
\end{proposition}

\begin{proof}
  Let $x,y,z$ be the axes of the coordinate system. By translating and scaling
  the Plattenbau we may assume that the outer box of the Plattenbau is the
  cube spanned by $(0,0,0)$ and $(1,1,1)$. The outer rectangle in the $z=0$
  plane is the \emph{bottom rectangle} $R_0$ of $\R$. The \emph{top rectangle}
  is the rectangle in a plane $z=1$, we denote it $R_1$. All the contacts of
  $R_0$ with inner rectangles are of type $A\to R_0$, i.e., $A \cap R_0$ is a
  segment on $R_0$.

  If there is no inner rectangle $A$ with $A\to R_0$, then there is a
  unique room $Z$ with a side on~$R_0$. Let $B$ be the side of $Z$
  opposite of the $R_0$-side. Now let $R_I = \R - R_0$ and let $R_O$
  consist of the outer rectangles of $R$ together with $B$, i.e.,
  $r_O=2$ and $r_I = r-1$.
  
  From now on we assume that there is an inner rectangle $A$ with
  $A\to R_0$, i.e., a segment in $R_0$. The union of all these segments
  yields a rectangular dissection $D$ of the unit square $U\subseteq
  R_0$. With each inner segment $s$ of $D$ there is a rectangle
  $R_s$. Let $z_s$ be the maximum $z$ coordinate of a point in $R_s$, we
  refer to $z_s$ as the height of $R_s$.

  If $s$ and $s'$ are segments in $R_0$ and $R_s \to R_{s'}$, then
  $z_s \leq z_{s'}$ because the representation is proper. This shows that if
  we fix some $h$ with $0\leq h\leq 1$ and only look at segments $s\in D$ with
  $z_s > h$ we get a dissection $D_h$ of $U$ into rectangles.
  Let $h^+$ be the maximum value of $z_s$ taken over inner segments $s\in D$.

  If $h^+=1$ we use that the dissection
  $D_1$ is nontrivial and conclude that at least one of the rectangles of $D_1$ spans a box $B$
  between $R_0$ and $R_1$ which contains a rectangle $A$ in a plane $z=h$ with
  $0< h < a$. Let $\R_B$ be the set of all rectangles of
  $\R$ which are in $B$. Since $A\in \R_B$ this set is nonempty.
  Let $\R_I$ be $\R_B$ together with 6 outer rectangles covering
  the sides of $B$ and let $R_O = \R - \R_B$. This is a decomposition
  as claimed.
  
  If $h^+ < 1$ let $D_+$ be the dissection $D_{h^+}$. If $D_+$
  contains a guillotine segment, i.e., a segment spanned between opposite
  sides of the outer square of $D_+$. Then we can permute the
  coordinates such that after the permutation we have $h^+=1$, i.e.,
  we are in the previous case and have a nontrivial decomposition.

  If $h^+ < 1$ and there is no guillotine segment, then the
  contact system of inner segments of $D_+$ is connected
  and for each of the outer segments there is an inner segment
  having a contact with it. Now consider two segments $s$ and $s'$ in
  $D_+$ such that $R_s \to R_{s'}$. Since $\R$ is generic there is some rectangle
  $T_{s'}$ containing the (open) top segment of  $R_{s'}$ in the interior, i.e,
  $T_{s'}$ contains some $\varepsilon$ stripe on both sides of the intersection
  of $R_{s'}$ with the plane $z=h^+$. Similarly there is a rectangle $T_s$
  containing the  top segment of  $R_{s}$. Hence, $T_{s'}$ and $T_{s}$ intersect
  whence  $T_{s'}=T_{s}$. Iterating this argument through all contacts of two segments of
  $D^+$ we find that there is a unique rectangle $T$ such that $R_s \to T$ for all
  interior segments $s$ of $D_+$. This shows that $T$ spans the outer square of $D_+$.
  Now let $\R_B$ be the set of all rectangles of
  $\R$ which have $z$-coordinates between $0$ and $h^+$, except $T$.
  This set is nonempty.
  Let $\R_I$ be $\R_B$ together with~6 outer rectangles covering
  the sides of $B$ and let $R_O = \R - \R_B$. This is a decomposition
  as claimed.
\end{proof}

\section{Conclusions}\label{sec:Conclusions}

We have studied touching graphs of (generic) Plattenbauten as generalizations of planar bipartite graphs to space. Our main results are that 3-chromatic planar graphs belong to this class, and the characterization of touching graphs of generic boxed Plattenbauten as a generalization of planar quadrangulations. However, a full characterization of touching graphs of (generic) Plattenbauten remains challenging. 

With our results at hand it is natural to try and extend results from the planar setting to space.
One example, is to attack a question asked by Jean Cardinal at the Order \& Geometry Workshop at Gułtowy Palace in 2016:
What is the 3-dimensional analogue of Baxter permutations? 
Since Baxter permutations are in bijection with boxed arrangements of axis-parallel segments in~$\RR^2$~\cite{ffno-bbfro-11}, the question aims at finding permutation-like objects corresponding to generic boxed Plattenbauten. Our iterative constructions from Subsection~\ref{subsec:iterative} might help.

A natural continuation of this project is going to higher dimensions, i.e., consider touching graphs of cuboids of co-dimension one  in~$\mathbb{R}^d$. Already the class of 4-dimensional Plattenbau graphs contains all planar graphs, which follows from~\cite{thomassen1986interval}. However, our constructions for planar 3-chromatic graphs could be generalized to higher dimensions, i.e., orthogonal subspaces in~$\mathbb{R}^d$. A first interesting question here would be to characterize   the skeleta of orthogonal subspaces in~$\mathbb{R}^d$. 

Finally, as suggested in the introduction, considering the intersection graph~$I_{\R}$ instead of the touching graph of a Plattenbau, yields an interesting but very different graph class. The plane analogue of this is known as B$_0$-CPG graphs~\cite{CPG18}, yielding a first subclass.
Another subclass are 4-connected planar graphs, since they have a rectangle contact representation in~$\RR^2$, see~\cite{ung-53,leinwand1984algorithm,kozminski1985rectangular,thomassen1986interval,rosenstiehl1986rectilinear}.
Also~$K_{12}$ is the intersection graph of the Plattenbau~$\R$ consisting of the twelve axis-parallel unit squares in~$\RR^3$ that have a corner on the origin.



\def\path{} 
\bibliographystyle{plainurl}

\bibliography{lit}

\begin{thebibliography}{10}

\bibitem{bowers2010circle}
Philip~L. Bowers.
\newblock Circle packing: a personal reminiscence.
\newblock In Mircea Pitici, editor, {\em The Best Writing on Mathematics 2010},
  pages 330--345. Princeton University Press, 2010.

\bibitem{buchsbaum2008rectangular}
Adam~L. Buchsbaum, Emden~R. Gansner, Cecilia~M. Procopiuc, and Suresh
  Venkatasubramanian.
\newblock Rectangular layouts and contact graphs.
\newblock {\em ACM Transactions on Algorithms}, 4(1), 2008.
\newblock \href {https://doi.org/10.1145/1328911.1328919}
  {\path{doi:10.1145/1328911.1328919}}.

\bibitem{Cha-11}
{L. Sunil} Chandran, Rogers Mathew, and Naveen Sivadasan.
\newblock Boxicity of line graphs.
\newblock {\em Discrete Mathematics}, 311(21):2359--2367, 2011.
\newblock \href {https://doi.org/10.1016/j.disc.2011.06.005}
  {\path{doi:10.1016/j.disc.2011.06.005}}.

\bibitem{de2001topological}
Hubert De~Fraysseix and Patrice Ossona~de Mendez.
\newblock On topological aspects of orientations.
\newblock {\em Discrete Mathematics}, 229(1):57--72, 2001.
\newblock \href {https://doi.org/10.1016/S0012-365X(00)00201-6}
  {\path{doi:10.1016/S0012-365X(00)00201-6}}.

\bibitem{CPG18}
Zakir Deniz, Esther Galby, Andrea Munaro, and Bernard Ries.
\newblock On contact graphs of paths on a grid.
\newblock In {\em Graph Drawing and Network Visualization}, volume 11282 of
  {\em LNCS}, pages 317--330. Springer, 2018.
\newblock \href {https://doi.org/10.1007/978-3-030-04414-5\_22}
  {\path{doi:10.1007/978-3-030-04414-5\_22}}.

\bibitem{EppM-14}
David Eppstein and Elena Mumford.
\newblock Steinitz theorems for simple orthogonal polyhedra.
\newblock {\em Journal of Computational Geometry}, 5(1):179--244, 2014.
\newblock \href {https://doi.org/10.20382/jocg.v5i1a10}
  {\path{doi:10.20382/jocg.v5i1a10}}.

\bibitem{f-cdpgo-01}
Stefan Felsner.
\newblock Convex drawings of planar graphs and the order dimension of
  3-polytopes.
\newblock {\em Order}, 18:19--37, 2001.
\newblock \href {https://doi.org/10.1023/A:1010604726900}
  {\path{doi:10.1023/A:1010604726900}}.

\bibitem{f-gga-04}
Stefan Felsner.
\newblock {\em Geometric Graphs and Arrangements}.
\newblock Vieweg Verlag, 2004.
\newblock \href {https://doi.org/10.1007/978-3-322-80303-0}
  {\path{doi:10.1007/978-3-322-80303-0}}.

\bibitem{f-lspg-04}
Stefan Felsner.
\newblock Lattice structures from planar graphs.
\newblock {\em The Electronic Journal of Combinatorics}, 11(R15):24p., 2004.
\newblock \href {https://doi.org/10.37236/1768} {\path{doi:10.37236/1768}}.

\bibitem{felsner2013rectangle}
Stefan Felsner.
\newblock Rectangle and square representations of planar graphs.
\newblock In J.~Pach, editor, {\em Thirty Essays on Geometric Graph Theory},
  pages 213--248. Springer, 2013.
\newblock \href {https://doi.org/10.1007/978-1-4614-0110-0\_12}
  {\path{doi:10.1007/978-1-4614-0110-0\_12}}.

\bibitem{f-odpmr-14}
Stefan Felsner.
\newblock The order dimension of planar maps revisited.
\newblock {\em SIAM Journal on Discrete Mathematics}, 28:1093--1101, 2014.
\newblock \href {https://doi.org/10.1137/130945284}
  {\path{doi:10.1137/130945284}}.

\bibitem{ffno-bbfro-11}
Stefan {Felsner}, \'Eric {Fusy}, Marc {Noy}, and David {Orden}.
\newblock {Bijections for Baxter families and related objects}.
\newblock {\em Journal of Combinatorial Theory, Series A}, 118(3):993--1020,
  2011.
\newblock \href {https://doi.org/10.1016/j.jcta.2010.03.017}
  {\path{doi:10.1016/j.jcta.2010.03.017}}.

\bibitem{WG-Plattenbauten}
Stefan Felsner, Kolja Knauer, and Torsten Ueckerdt.
\newblock Plattenbauten: {T}ouching rectangles in space.
\newblock In {\em Graph-Theoretic Concepts in Computer Science}, volume 12301
  of {\em LNCS}, pages 161--173. Springer, 2020.
\newblock \href {https://doi.org/10.1007/978-3-030-60440-0\_13}
  {\path{doi:10.1007/978-3-030-60440-0\_13}}.

\bibitem{fz-os3d-06}
Stefan Felsner and Florian Zickfeld.
\newblock Schnyder woods and orthogonal surfaces.
\newblock {\em Discrete and Computational Geometry}, 40:103--126, 2008.
\newblock \href {https://doi.org/10.1007/s00454-007-9027-9}
  {\path{doi:10.1007/s00454-007-9027-9}}.

\bibitem{fusy2007combinatoire}
{\'E}ric Fusy.
\newblock {\em Combinatoire des cartes planaires et applications
  algorithmiques}.
\newblock PhD thesis, LIX Polytechnique, 2007.
\newblock URL:
  \url{http://www.lix.polytechnique.fr/Labo/Eric.Fusy/Theses/these_eric_fusy.pdf}.

\bibitem{g-3cpghaicru3s-19}
Daniel Gon\c{c}alves.
\newblock 3-colorable planar graphs have an intersection segment representation
  using 3 slopes.
\newblock In {\em Graph-Theoretic Concepts in Computer Science}, volume 11789
  of {\em LNCS}, pages 351--363. Springer International Publishing, 2019.
\newblock \href {https://doi.org/10.1007/978-3-030-30786-8\_27}
  {\path{doi:10.1007/978-3-030-30786-8\_27}}.

\bibitem{s-cee-73}
Terja Hansen and Herbert Scarf.
\newblock {\em The Computation of Economic Equilibria}, volume~24 of {\em
  Cowles Foundation Monograph}.
\newblock Yale Univ. Press, 1973.
\newblock URL: \url{https://lccn.loc.gov/73077165}.

\bibitem{hartman1991grid}
I.Ben-Arroyo Hartman, Ilan Newman, and Ran Ziv.
\newblock On grid intersection graphs.
\newblock {\em Discrete Mathematics}, 87(1):41--52, 1991.
\newblock \href {https://doi.org/10.1016/0012-365X(91)90069-E}
  {\path{doi:10.1016/0012-365X(91)90069-E}}.

\bibitem{hm-odcg-99}
Serkan Ho\c{s}ten and Walter~D. Morris.
\newblock The order dimension of the complete graph.
\newblock {\em Discrete Mathematics}, 201(1):133--139, 1999.
\newblock \href {https://doi.org/10.1016/S0012-365X(98)00315-X}
  {\path{doi:10.1016/S0012-365X(98)00315-X}}.

\bibitem{koebe1936kontaktprobleme}
Paul Koebe.
\newblock {Kontaktprobleme der konformen Abbildung}.
\newblock {\em Berichte \"uber die Verhandlungen der S\"achsischen Akademie der
  Wissenschaften zu Leipzig, Mathematisch-Physische Klasse}, 88:141--164, 1936.

\bibitem{kozminski1985rectangular}
Krzysztof Ko{\'z}mi{\'n}ski and Edwin Kinnen.
\newblock Rectangular duals of planar graphs.
\newblock {\em Networks}, 15(2):145--157, 1985.
\newblock \href {https://doi.org/10.1002/net.3230150202}
  {\path{doi:10.1002/net.3230150202}}.

\bibitem{leinwand1984algorithm}
Sany~M Leinwand and Yen-Tai Lai.
\newblock An algorithm for building rectangular floor-plans.
\newblock In {\em 21st Design Automation Conference Proceedings}, pages
  663--664. IEEE, 1984.
\newblock \href {https://doi.org/10.1109/DAC.1984.1585874}
  {\path{doi:10.1109/DAC.1984.1585874}}.

\bibitem{m-pgmrtmi-02}
Ezra Miller.
\newblock Planar graphs as minimal resolutions of trivariate monomial ideals.
\newblock {\em Documenta Mathematica}, 7:43--90, 2002.
\newblock \href {https://doi.org/10.4171/DM/117} {\path{doi:10.4171/DM/117}}.

\bibitem{ms-cca-04}
Ezra Miller and Bernd Sturmfels.
\newblock {\em Combinatorial Commutative Algebra}, volume 227 of {\em Graduate
  Texts in Mathematics}.
\newblock Springer, 2004.
\newblock \href {https://doi.org/10.1007/b138602} {\path{doi:10.1007/b138602}}.

\bibitem{pach1994representation}
J{\'a}nos Pach, Hubert de~Fraysseix, and Patrice Ossona~de Mendez.
\newblock Representation of planar graphs by segments.
\newblock In K.~Böröczky and G.~Fejes Tóth, editors, {\em Intuitive
  Geometry}, Coll. Math. Soc. J. Bolyai 63, pages 109--117. North-Holland,
  1994.
\newblock URL:
  \url{https://infoscience.epfl.ch/record/129343/files/segments.pdf}.

\bibitem{rosenstiehl1986rectilinear}
Pierre Rosenstiehl and Robert~E Tarjan.
\newblock Rectilinear planar layouts and bipolar orientations of planar graphs.
\newblock {\em Discrete and Computational Geometry}, 1:343--353, 1986.
\newblock \href {https://doi.org/10.1007/BF02187706}
  {\path{doi:10.1007/BF02187706}}.

\bibitem{stephenson2005introduction}
Kenneth Stephenson.
\newblock {\em Introduction to circle packing: The theory of discrete analytic
  functions}.
\newblock Cambridge University Press, 2005.
\newblock \href {https://doi.org/10.1017/S0025557200180726}
  {\path{doi:10.1017/S0025557200180726}}.

\bibitem{tamassia1986unified}
Roberto Tamassia and Ioannis~G Tollis.
\newblock A unified approach to visibility representations of planar graphs.
\newblock {\em Discrete and Computational Geometry}, 1:321--341, 1986.
\newblock \href {https://doi.org/10.1007/BF02187705}
  {\path{doi:10.1007/BF02187705}}.

\bibitem{thomassen1986interval}
Carsten Thomassen.
\newblock Interval representations of planar graphs.
\newblock {\em Journal of Combinatorial Theory, Series B}, 40(1):9--20, 1986.
\newblock \href {https://doi.org/10.1016/0095-8956(86)90061-4}
  {\path{doi:10.1016/0095-8956(86)90061-4}}.

\bibitem{ueckerdt-phd}
Torsten Ueckerdt.
\newblock {\em Geometric Representations of Graphs with Low Polygonal
  Complexity}.
\newblock Doctoral thesis, Technische Universit\"at Berlin, Fakult\"at II -
  Mathematik und Naturwissenschaften, Berlin, 2012.
\newblock \href {https://doi.org/10.14279/depositonce-3190}
  {\path{doi:10.14279/depositonce-3190}}.

\bibitem{ung-53}
Peter Ungar.
\newblock On diagrams representing maps.
\newblock {\em Journal of the London Mathematical Society}, 28(3):336--342,
  1953.
\newblock \href {https://doi.org/10.1112/jlms/s1-28.3.336}
  {\path{doi:10.1112/jlms/s1-28.3.336}}.

\end{thebibliography}

\end{document}